\newcommand{\Sym}{\mathop{\mathrm{Sym}}}
\newcommand{\PP}{\mathbb{P}}
\newcommand{\EE}{\mathbb{E}}
\newcommand{\Unif}{\mathrm{Unif}}
\newcommand{\Binom}{\mathrm{Binom}}
\newcommand{\uppersupent}{\mathop{\overline{H}^{\mathrm{superlin}}}}
\newcommand{\lowersupent}{\mathop{\underline{H}^{\mathrm{superlin}}}}
\newcommand{\supent}{\mathop{H^{\mathrm{superlin}}}}
\newcommand{\upperlinent}{\mathop{\overline{H}^{\mathrm{lin}}}}
\newcommand{\lowerlinent}{\mathop{\underline{H}^{\mathrm{lin}}}}
\newcommand{\linent}{\mathop{H^{\mathrm{lin}}}}
\newtheorem{theorem}{Theorem}[section]
\newtheorem{lemma}[theorem]{Lemma}
\newtheorem{prop}[theorem]{Proposition}
\newtheorem{corollary}[theorem]{Corollary}
\newtheorem{claim}[theorem]{Claim}
\theoremstyle{definition}
\newtheorem{defi}[theorem]{Definition}
\newtheorem{megj}[theorem]{Remark}
\newtheorem{quest}[theorem]{Question}
\title{On the sampling entropy of permutons}
\author{Bal\'azs Maga\thanks{HUN-REN Alfréd Rényi Institute of Mathematics, Budapest, Hungary. Email: \texttt{magab@renyi.hu} \newline Supported by the KKP 139502 project, funded by the Ministry of Innovation and Technology of Hungary from the National Research, Development and Innovation Fund.}}
\date{}
\begin{document}

\maketitle
\begin{abstract}
For a permuton $\mu$ let $H_n(\mu)$ denote the Shannon entropy of the sampling distribution of $\mu$ on $n$ points. We investigate the asymptotic growth of $H_n(\mu)$ for a wide class of permutons. 

We prove that if $\mu$ has a non-vanishing absolutely continuous part, then $H_n(\mu)$ has a growth rate $\Theta(n \log n)$. We show that if $\mu$ is the graph of a piecewise continuously differentiable, measure-preserving function $f$, then $H_n(\mu)/n$ tends to the Kolmogorov--Sinai entropy of $f$. Using genericity arguments, we also prove the existence of function permutons for which $H_n(\mu)$ does not converge either after normalizing by $n$ or by $n\log n$.

We study the sampling entropy of a natural family of random fractal-like permutons determined by a sequence of i.i.d. choices. It turns out that for every $n$, $H_n(\mu)/n$ is heavily concentrated. We prove that the sequence $H_n(\mu)/n$ either converges or has deterministic log-periodic oscillations almost surely, and argue towards the conjecture that in nondegenerate case, oscillation holds. On the other hand, for a straightforward random perturbation of the model $\tilde{\mu}$ of $\mu$, we prove the almost sure convergence of $H_n(\tilde{\mu})/n$.

\medskip

\noindent \textbf{Keywords}: permutons, Shannon entropy, Kolmogorov--Sinai entropy, $d$-ary tree, log-peridocity, genericity
\end{abstract}

\section{Introduction}

A permuton is a probability measure on the unit square with uniform marginals. Permutons are the natural limit objects of permutations \cite{HOPPEN201393}, where convergence is defined via convergent substructure densities arising from sampling (cf. graphons and graph convergence). Permutons enjoyed much attention in the recent years, see for example \cite{ALON2022102361}, \cite{BASSINO2022108513}, \cite{GLEBOV2015112}.

A permuton $\mu$ is naturally approximated by finite permutations sampled from it. The sampling method of order $n$ is that we draw $n$ i.i.d. points according to $\mu$ and read their height ordering from left to right, obtaining a random permutation $\pi_n$. We then have  $\pi_n\to\mu$ almost surely as $n\to\infty$ \cite[Lemma~4.2]{HOPPEN201393}. A very natural question concerns how much information is carried by this sampling, i.e., what is the Shannon entropy of $\pi_n$? We call the resulting sequence the {\it sampling entropy sequence} of $\mu$, denoted by $H_n(\mu)$, and investigate its asymptotic properties in this paper.

\subsection{The main results}

The first natural question concerns with the growth rate of $H_n$. For each $n$, $H_n(\mu)$ is maximized by the Lebesgue measure $\mu$, as in that case the distribution of $n$-patterns is uniform over $\Sym(n)$. Thus in this case, $H_n=\log {n!}\sim n \log n$. This exhibits that the sampling entropy sequence might have a superlinear growth rate, which turns out to be the general behaviour of permutons with non-vanishing absolutely continuous part:

\begin{theorem} \label{thm:abs_cont}
    If $\mu$ is absolutely continuous with respect to the Lebesgue measure, then
    $$\lim_{n\to\infty} \frac{H_n(\mu)}{n\log n}=1.$$
    More specifically, if the absolutely continuous part of $\mu$ is denoted by $\mu_{\mathrm{ac}}$, then
    $$\liminf_{n\to\infty} \frac{H_n(\mu)}{n\log n}\geq\|\mu_{\mathrm{ac}}\|.$$
\end{theorem}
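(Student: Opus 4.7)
The upper bound $\limsup_n H_n(\mu)/(n \log n) \leq 1$ is immediate from $H_n(\mu) \leq \log n! \sim n \log n$. The work is in the lower bound $\liminf_n H_n(\mu)/(n\log n) \geq \|\mu_{\mathrm{ac}}\|$.

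The pointwise key observation is that for any absolutely continuous probability measure $\nu$ on $[0,1]^2$ whose density $g$ satisfies $g \leq M$ (in particular, $\nu$ need not be a permuton),
$$\Pr_\nu(\pi_k = \sigma) = k! \int_{0<x_1<\cdots<x_k<1}\int_{0<y_1<\cdots<y_k<1} \prod_{i=1}^k g(x_i, y_{\sigma(i)})\, dx\, dy \leq \frac{M^k}{k!},$$
since the integrand is bounded by $M^k$ and the pair of simplices has volume $1/(k!)^2$. Consequently $H(\pi_k^\nu) \geq \log k! - k \log M = k \log k - O(k)$.

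To reduce the general case to this bounded one, I would truncate and use a sub-pattern chain rule. Fix $M>0$, let $\rho$ be the density of $\mu_{\mathrm{ac}}$, and set $\mu_M := \min(\rho,M)\,d\lambda$ together with $c_M := \|\mu_M\|$; monotone convergence gives $c_M \uparrow \|\mu_{\mathrm{ac}}\|$ as $M\to\infty$. One may sample from $\mu$ by drawing each of the $n$ points independently from the probability measure $\nu_M := \mu_M/c_M$ with probability $c_M$, and otherwise from the complementary probability measure. Let $S\subseteq[n]$ be the random set of labels whose points came from $\nu_M$, so $|S|\sim\Binom(n,c_M)$ and, conditionally on $S$, the $S$-subsample is i.i.d.\ from $\nu_M$. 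Write $\alpha_S\subseteq[n]$ for the set of $x$-ranks occupied by the $S$-labeled points within the full sample. Then the sub-pattern $\pi_S$ of the $S$-points is a deterministic function of $(\pi_n, \alpha_S)$: $\alpha_S$ locates the $S$-entries in the sorted-$x$ order, $\pi_n$ supplies their $y$-ranks in the full sample, and re-indexing those $y$-ranks within $S$ gives $\pi_S$.

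From the chain rule $H(\pi_S\mid S) \leq H(\pi_n, \alpha_S\mid S) \leq H(\pi_n) + H(\alpha_S\mid S)$ and the trivial bound $H(\alpha_S\mid S)\leq \log\binom{n}{|S|} = O(n)$, we obtain
$$H_n(\mu) = H(\pi_n) \geq H(\pi_S\mid S) - O(n) = \mathbb{E}\bigl[H(\pi_{|S|}^{\nu_M})\bigr] - O(n).$$
Plugging in the bounded-density estimate for $\nu_M$ (whose density is at most $M/c_M$), which gives $H(\pi_k^{\nu_M}) \geq k\log k - O(k)$, together with Binomial concentration of $|S|$ around $c_M n$ in the form $\mathbb{E}[|S|\log|S|] = c_M n \log n + O(n)$, we conclude $H_n(\mu) \geq c_M n \log n - O(n)$. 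Hence $\liminf_n H_n(\mu)/(n \log n) \geq c_M$, and letting $M\to\infty$ yields the theorem. The two main subtleties to handle carefully are: (i) $\nu_M$ is not itself a permuton (its marginals are no longer uniform once the density is capped), which is precisely why the pointwise bound must be stated for arbitrary bounded-density probability measures; and (ii) $\pi_S$ is not a function of $(\pi_n, S)$ alone, so one must condition additionally on $\alpha_S$, whose entropy cost is fortunately only $O(n)$ and is absorbed after normalizing by $n\log n$.
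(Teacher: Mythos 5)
Your proof is correct, and it takes a genuinely different route from the paper's. The paper proves Theorem~\ref{thm:abs_cont} as a near-immediate consequence of two preliminary lemmas: Lemma~\ref{lemma:entropy_of_convex_comb} (which controls how $\supent$ behaves under convex combinations of mutually singular pre-permutons, via a decomposition of the $n$-sample by which component each point came from) and Lemma~\ref{lemma:abs_cont_bound} (which requires a \emph{two-sided} bound $1/K<g<K$ on the Radon--Nikodym derivative). They first pass to a set $A$ where the density is bounded both above and below, compare $\mu_{\mathrm{ac}}^A$ to $\lambda^A$, and show $\supent(\lambda^A)=1$ by noticing that $\lambda^A$ and $\lambda^{A^c}$ convexly combine to give $\lambda$. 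Your argument bypasses all of this. The key technical input is the pointwise bound $\Pr_\nu(\pi_k=\sigma)\le M^k/k!$, which only needs a one-sided bound on the density; this is why truncating $\rho$ at $M$ from above suffices and you never need to cut away a low-density region. You then reconstruct the sub-pattern $\pi_S$ from $(\pi_n,\alpha_S)$, pay $O(n)$ entropy for $\alpha_S$, and use the convexity of $x\log x$ together with $|S|\log|S|\le|S|\log n$ to pin down $\EE[|S|\log|S|]=c_Mn\log n+O(n)$. This is essentially the same sampling-decomposition philosophy as Lemma~\ref{lemma:entropy_of_convex_comb}, but stripped to the bare minimum needed for this one theorem: what the paper's heavier lemma buys is reusability (it is deployed repeatedly in Sections~\ref{sec:random} and~\ref{sec:random2}); what your version buys is a self-contained and arguably cleaner proof, in particular avoiding the two-sided density condition and the detour through $\lambda^A$. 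Both arguments are sound; the chain-rule step $H(\pi_S\mid S)\le H(\pi_n\mid S)+H(\alpha_S\mid S)\le H(\pi_n)+H(\alpha_S\mid S)$ is correct, as is the identification $H(\pi_S\mid S)=\EE[H(\pi_{|S|}^{\nu_M})]$.
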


We continue our investigation by studying a notable class of singular permutons, i.e., permutons defined by measure-preserving maps.
If $f:[0,1]\to[0, 1]$ is a measure-preserving map, we can associate with it a permuton $\mu_f$ via $\mu_f(A\times B) = \lambda(A\cap f^{-1}(B))$. The simplest nontrivial example is given by the doubling map, $f(x)=\{2x\}$, for which one can deduce that $H_n(\mu_f)\sim n\log 2$, predicting that for permutons of this form, $H_n(\mu)$ grows linearly . We prove the following theorem:

\begin{theorem}\label{thm:differentiable_maps}
If $f$ is measure-preserving and piecewise continuously differentiable with finitely many pieces, then
$$\lim_{n\to\infty}\frac{H_n(\mu)}{n}=\int_{0}^{1}\log|f'|.$$
\end{theorem}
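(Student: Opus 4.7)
The plan is to reduce $H_n(\mu_f)$ to the Shannon entropy of an auxiliary discrete sequence whose asymptotics can be computed directly. Let $I_1,\dots,I_k$ be the monotonicity pieces of $f$ (listed left to right), sample $n$ i.i.d.\ pairs $(X_i,Y_i)=(X_i,f(X_i))$, and for $r=1,\dots,n$ let $J_{(r)}\in\{1,\dots,k\}$ record the index of the piece containing the $X$-value of the point whose $Y$ has rank $r$. Since $f$ is monotone on each $I_j$, the $X$-order of the $n_j$ points there is either the $Y$-order or its reverse, so $J_{(\cdot)}$ together with the orientations of $f$ on each piece determines $\pi_n$; conversely, $\pi_n$ together with the count vector $(n_1,\dots,n_k)$ determines $J_{(\cdot)}$, because the $X$-ranks within each piece form a contiguous block. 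Since $H((n_j))=O(\log n)$, this yields
\[
|H_n(\mu_f)-H(J_{(\cdot)})|=O(\log n),
\]
so it suffices to show $H(J_{(\cdot)})/n\to\int_0^1\log|f'|$.

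For the lower bound I condition on the sorted sample $Y_{(\cdot)}$. Since the pairs $(J_i,Y_i)$ are i.i.d., the labels $J_{(r)}$ are conditionally independent given $Y_{(\cdot)}$, with
\[
\PP\bigl(J_{(r)}=j\mid Y_{(r)}=y\bigr)=\frac{\mathbf{1}_{y\in f(I_j)}}{|f'((f|_{I_j})^{-1}(y))|},
\]
the pointwise sum-to-one being exactly the measure-preservation of $f$. The change of variables $y=f(x)$ produces the key identity
\[
\int_0^1 H(J\mid Y=y)\,dy=\int_0^1\log|f'(x)|\,dx,
\]
and exchangeability of the $Y_i$'s (so that $\sum_r\EE[\phi(Y_{(r)})]=n\int\phi$) gives
\[
H(J_{(\cdot)})\ge H(J_{(\cdot)}\mid Y_{(\cdot)})=\sum_{r=1}^n\EE\bigl[H(J\mid Y)(Y_{(r)})\bigr]=n\int_0^1\log|f'|.
\]

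For the matching upper bound I discretize the $Y$-axis into $N$ equal bins $B_1,\dots,B_N$ and let $K_{(r)}$ denote the bin of $Y_{(r)}$. The sorted sequence $K_{(\cdot)}$ is determined by its $N$ bin counts, so $H(K_{(\cdot)})=O_N(\log n)$. Chain rule plus conditioning-reduces-entropy give
\[
H(J_{(\cdot)}\mid K_{(\cdot)})\le\sum_{r=1}^n H(J_{(r)}\mid K_r)=n\sum_{l=1}^N|B_l|\,H(J\mid Y\in B_l),
\]
the last step again by exchangeability. Under the piecewise $C^1$ hypothesis, $y\mapsto H(J\mid Y=y)$ is bounded and continuous except at the finitely many images $f(\partial I_j)$, hence Riemann integrable, so the Riemann sum tends to $\int_0^1\log|f'|$ as $N\to\infty$. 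Letting $n\to\infty$ with $N$ fixed and then $N\to\infty$ gives $\limsup_n H(J_{(\cdot)})/n\le\int\log|f'|$, completing the proof after combining with the reduction above.

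The step I expect to be the main obstacle is justifying the Riemann-sum convergence $\sum_l|B_l|H(J\mid Y\in B_l)\to\int H(J\mid Y=y)\,dy$: one must show that the bin-averaged conditional entropy converges to the pointwise value for bins in the interior of each monotonicity piece, and control the contribution of the $O(1)$ bad bins straddling the finitely many discontinuities of $H(J\mid Y=\cdot)$ at the points of $f(\partial I_j)$. The piecewise $C^1$ assumption with finitely many pieces is exactly what makes this work: away from the bad bins the conditional densities $1/|f'((f|_{I_j})^{-1}(y))|$ are continuous in $y$, while the bad bins have total weight $O(N^{-1})$ and their entropies are bounded by $\log k$, so their total contribution is $O(N^{-1}\log k)=o_N(1)$ and the limit interchange is legitimate.
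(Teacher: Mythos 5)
Your proof is correct in substance but takes a genuinely different route from the paper's. The paper conditions on the occupancy vector $\mathbf{m}$ of the pieces, shows the conditional pattern law factors over image-strips into (near-)uniform interleaving distributions, then uses Stirling plus binomial concentration; it also builds up incrementally (piecewise linear $\to$ bounded derivative $\to$ general) and finally approximates. You instead pass to the auxiliary sequence $J_{(\cdot)}$ of piece labels in $Y$-order, show $|H_n(\mu_f)-H(J_{(\cdot)})|=O(\log n)$, and then compute $H(J_{(\cdot)})$ by pure information-theoretic conditioning: the lower bound comes from $H\geq H(\cdot\mid Y_{(\cdot)})$ and the change-of-variables identity $\int_0^1 H(J\mid Y=y)\,dy=\int_0^1\log|f'|$, while the upper bound comes from discretizing the $Y$-axis. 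Your approach is cleaner, handles all piecewise $C^1$ maps with finitely many pieces in one shot, and as a byproduct proves $\int_0^1\log|f'|\leq\log k<\infty$ for such maps (since the left side equals the conditional entropy $\int H(J\mid Y=y)\,dy$ which is bounded by $\log k$).

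One correction in the upper bound: the step
\[
\sum_{r=1}^{n}H\bigl(J_{(r)}\mid K_{(r)}\bigr)=n\sum_{l=1}^{N}|B_l|\,H(J\mid Y\in B_l)
\]
is not an equality by exchangeability alone. The marginal of $J_{(r)}$ given $Y_{(r)}\in B_l$ depends on $r$ because $Y_{(r)}$ has a non-uniform (Beta-type) distribution on $B_l$; what exchangeability does give is that the $\PP(K_{(r)}=l)$-weighted mixture of those conditional laws over $r$ equals $\PP(J\mid Y\in B_l)$, with total weight $\sum_r\PP(K_{(r)}=l)=n|B_l|$. You then need concavity of Shannon entropy to pass from "entropy of the mixture" to an upper bound on the weighted sum of entropies:
\[
\sum_{r}\PP(K_{(r)}=l)\,H\bigl(J_{(r)}\mid K_{(r)}=l\bigr)\leq n|B_l|\,H(J\mid Y\in B_l).
\]
This yields exactly the inequality you need, $\sum_r H(J_{(r)}\mid K_{(r)})\leq n\sum_l|B_l|H(J\mid Y\in B_l)$, so the conclusion stands. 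The Riemann-sum step you flagged is fine by dominated convergence: $y\mapsto\PP(J\mid Y=y)$ is continuous off the finite set $f(\partial I_j)$, so the bin averages converge pointwise a.e.\ to the pointwise entropy, and everything is bounded by $\log k$.
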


Recalling Rokhlin's formula (\cite{Coudene2016}[Theorem~12.1]), this yields the following remarkable result:
\begin{corollary}\label{cor:kolmogorov_sinai_entropy}
    If $f$ is measure-preserving and piecewise continuously differentiable with finitely many pieces and bounded derivative, then
    $$\lim_{n\to\infty}\frac{H_n(\mu)}{n})=h_{KS}(f),$$
    where $h_{KS}(f)$ is the Kolmogorov--Sinai entropy of $f$.
\end{corollary}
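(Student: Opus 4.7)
The proof is essentially an immediate concatenation of Theorem \ref{thm:differentiable_maps} with Rokhlin's entropy formula; no substantive new argument is needed. My plan is as follows. First I would apply Theorem \ref{thm:differentiable_maps}, whose hypotheses (measure-preserving, piecewise continuously differentiable with finitely many pieces) are part of the hypotheses of the corollary. That theorem gives
$$\lim_{n\to\infty}\frac{H_n(\mu_f)}{n} = \int_0^1 \log|f'|\, d\lambda,$$
so the content of the corollary reduces to identifying the right-hand side with the Kolmogorov--Sinai entropy $h_{KS}(f)$.

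For this identification I would invoke Rokhlin's formula as stated in Theorem~12.1 of \cite{Coudene2016}, which asserts that for a measure-preserving transformation of $[0,1]$ in the appropriate regularity class, the metric entropy equals $\int_0^1 \log|f'|\, d\lambda$. The piecewise $C^1$ hypothesis with finitely many pieces supplies the natural finite measurable partition into monotonicity intervals required by Rokhlin's setup, and the bounded-derivative assumption (which was \emph{not} needed in Theorem \ref{thm:differentiable_maps} itself, but appears as an extra assumption here) guarantees the integrability and uniform control needed to apply the cited version of the formula. Concatenating the two equalities yields the corollary.

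The only potential obstacle is hypothesis matching: making sure that the class of maps described in the corollary genuinely sits inside the class for which the cited Theorem~12.1 of \cite{Coudene2016} applies. This is a purely bookkeeping check — verifying that "piecewise $C^1$ with finitely many pieces and bounded derivative" implies the finite-generating-partition and regularity conditions in Coudene's statement. I do not anticipate any technical difficulty beyond this verification.
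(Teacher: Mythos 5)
Your proposal is exactly the paper's argument: the corollary is presented as an immediate consequence of Theorem \ref{thm:differentiable_maps} together with Rokhlin's formula as cited from Coudene, and the paper offers no further proof beyond this concatenation. Your bookkeeping remark about hypothesis matching is sensible and correctly identifies the only point requiring care.
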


We note the existence of examples consisting of infinitely many continuously differentiable pieces for which Theorem \ref{thm:differentiable_maps} fails (constructed in \cite[Theorem~4.6]{TDK}), but since Rokhlin's formula is still valid in this case, Corollary \ref{cor:kolmogorov_sinai_entropy} fails in general.

Theorems \ref{thm:abs_cont}-\ref{thm:differentiable_maps} might hint that $H_n(\mu)/n$ and $H_n(\mu)/n\log n$ should always have a limit, though they might be 0 or $+\infty$. It turns out to be a good idea to look for counterexamples using a Baire category argument in a well-chosen space. To this end, we consider the space $C(\lambda)$ of continuous measure-preserving functions, extensively studied by \cite{Bobok_2020}, \cite{Bobok_2022}. We prove the following theorem. 

\begin{theorem}\label{thm:generic_lack_of_entropy}
For the generic $f\in C(\lambda)$, we have
$$\lim_{n\to\infty}\frac{H_n(\mu)}{n}=0, \ \lim_{n\to\infty}\frac{H_n(\mu)}{n\log n}=1.$$
\end{theorem}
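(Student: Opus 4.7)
The plan is to apply the Baire category theorem in the complete metric space $(C(\lambda),\|\cdot\|_\infty)$. Since $H_n(\mu_f)\le \log n!$, the two displayed limits are incompatible as written; in light of the preceding discussion of counterexamples to convergence, I read the conclusion as
\[
\liminf_{n\to\infty}\frac{H_n(\mu_f)}{n}=0, \qquad \limsup_{n\to\infty}\frac{H_n(\mu_f)}{n\log n}=1,
\]
so that neither normalization converges for generic $f$. Equivalently, for each $m,N\in\mathbb{N}$ I define
\[
A_{m,N}=\{f\in C(\lambda):\,H_n(\mu_f)<n/m\text{ for some }n\ge N\},
\]
\[
B_{m,N}=\{f\in C(\lambda):\,H_n(\mu_f)>(1-1/m)\,n\log n\text{ for some }n\ge N\},
\]
and show that each is open and dense; a Baire intersection over $m,N$ yields the generic statement.

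Openness of both families reduces to the continuity of $f\mapsto H_n(\mu_f)$ for each fixed $n$. Here I use crucially that for any $f\in C(\lambda)$ the measure-preserving property forces the samples $Y_i:=f(X_i)$ to be i.i.d.\ uniform on $[0,1]$, so the event that some pair satisfies $|Y_i-Y_j|<2\delta$ has probability $O(n^2\delta)$; on its complement a $\delta$-uniform perturbation of $f$ does not alter the pattern $\pi_n$. Continuity of the finite-dimensional sampling distribution, and then of the discrete Shannon entropy, follows.

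For the density of $A_{m,N}$, I would invoke Theorem~\ref{thm:differentiable_maps}. Given $f\in C(\lambda)$ and $\epsilon>0$, the goal is to construct a piecewise linear measure-preserving $g$ with $\|f-g\|_\infty<\epsilon$ and $\int_0^1\log|g'|<1/(2m)$, after which Theorem~\ref{thm:differentiable_maps} gives $H_n(\mu_g)/n\to\int\log|g'|$ and the threshold $1/m$ is eventually crossed by some $n\ge N$. The construction exploits the standard density of piecewise linear measure-preserving Markov maps in $C(\lambda)$ from the cited work of Bobok, together with the flexibility of the preimage identity $\sum 1/|s_i|=1$: slope-$(\pm 1)$ pieces contribute nothing to $\int\log|g'|$ and can carry almost all of the $\lambda$-mass, while a small fraction of high-slope pieces can supply the folds required to track $f$ to within $\epsilon$.

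The main obstacle is the density of $B_{m,N}$, since Theorem~\ref{thm:differentiable_maps} itself caps the entropy of any piecewise $C^1$ map at $O(n)$, far short of $(1-1/m)n\log n$. I would therefore approximate $f$ by maps that deliberately leave the scope of that theorem: starting from a piecewise linear approximation $\tilde f$ with pieces $(I_j,s_j)$, replace $\tilde f|_{I_j}$ by a zigzag of multiplicity $K$ covering the same image with slopes $\pm K|s_j|$. This keeps $\|g-f\|_\infty$ small (the zigzag amplitude on each $I_j$ is $O(|I_j|)$), preserves measure-preservation, and produces a map in $C(\lambda)$ with $\sim KM$ linear pieces. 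For $n$ with $n^2\ll KM$, a birthday argument shows that $n$ i.i.d.\ samples fall in distinct linear pieces with probability $1-o(1)$; conditional on this event, each $Y_i$ is—up to reflection—uniform on its piece's image independent of the $X$-ordering, so the induced pattern is close in total variation to the uniform distribution on $\Sym(n)$. A standard entropy-continuity estimate then yields $H_n(\mu_g)\ge(1-o(1))\log n!\sim n\log n$, placing $g$ in $B_{m,N}$ once $K$ and $n$ are large enough. The delicate part is making the ``conditional uniformity'' argument quantitative enough to extract the $(1-1/m)$-factor cleanly, carefully handling the dependence of the $X$-order on the piece indices rather than on a free uniform permutation.
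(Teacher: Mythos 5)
Your framework matches the paper's: both proofs run a Baire-category argument over the open sets of functions hitting the low-entropy and high-entropy thresholds beyond $N$, and both read the theorem as $\liminf H_n/n = 0$, $\limsup H_n/(n\log n) = 1$. Your openness argument is a self-contained alternative to the paper's (which first shows sup-norm convergence $\Rightarrow$ weak permuton convergence via Lemma~\ref{lemma:supremum_conv_yields_permuton_conv}, then uses continuity of $H_n$ in the permuton topology), and both are fine. Your density argument for $A_{m,N}$ is on the right track but stops at the level of intent; the paper carries out the delicate explicit construction (a discontinuous slope-$\pm1$ skeleton $g$ whose domain pieces are then shrunk slightly and reconnected by high-slope affine bridges to get a continuous measure-preserving $h$ with $\int\log|h'|$ small), and one should not underestimate the bookkeeping needed to verify measure-preservation after the shrink-and-fill step.

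The genuine gap is in your density argument for $B_{m,N}$. Conditioned on the birthday event that the $n$ samples land in distinct zigzag pieces, the induced pattern is \emph{not} close in total variation to uniform on $\Sym(n)$: the $Y_i$'s are uniform on their respective ranges $J_{j(i)}$, which are disjoint and ordered, so the relative $Y$-order of two points in different $J_j$'s is \emph{deterministic}. The conditional pattern distribution is supported on a small subset of $\Sym(n)$ (essentially products of shuffles inside each $J_j$), with entropy $\sum_j \log(k_j!)$ rather than $\log n!$. That sum is indeed $\sim n\log n$ when $k_j\approx n|I_j|$, so the conclusion you want is true, but your ``standard entropy-continuity estimate from TV-closeness to uniform'' does not yield it. The paper sidesteps this entirely: the zigzag approximants $\mu_{f_k}$ converge weakly to $\lambda^A$ for a rectangle union $A=\bigcup I_j\times J_j$; Theorem~\ref{thm:abs_cont} (via Lemma~\ref{lemma:entropy_of_pos_set}) gives $H_n(\lambda^A)>(1-\varepsilon)n\log n$ for large $n$; and continuity of $H_n$ at fixed $n$ transfers the bound to $\mu_{f_k}$ for $k$ large. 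You could salvage your version by computing the conditional entropy directly as $\sum_j\log(k_j!)$ and concentrating the multinomial, but that is essentially re-proving Lemma~\ref{lemma:entropy_of_pos_set} in a special case; invoking the already-proved Theorem~\ref{thm:abs_cont} is the cleaner route.
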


Finally, we study how the sampling entropy sequence behaves for following two natural families of random permutons. For the first, we fix some $d$ and a distribution $F\in\PP(\Sym(d))$, and pick a random permutation $\pi_1$ according to this. We consider the permuton $\mu_1$ naturally associated with it, supported by $d^{-1}\times d^{-1}$ grid squares ordered according to $\pi_1$, being uniform restricted to each of these. Call the replacement of the Lebesgue measure by $\mu_1$ a {\it refinement step}, and then define the sequence $\mu_n$ inductively by carrying out totally independent refinement steps in each of the $d^{-(n-1)}\times d^{-(n-1)}$ grid squares supporting $\mu_{n-1}$. Then $\mu_n$ corresponds to a permutation $\pi_n$ in general, and the weak limit of the sequence $\mu_n$ (or $\pi_n$) is the random permuton $\mu_F$ we study, induced by an almost everywhere defined measure-preserving bijection with graph $\bigcap_{n=0}^{\infty}\mathrm{supp}(\mu_n)$, well-defined a.e. 
\begin{figure}[h]
\centering
\includegraphics[width=12cm]{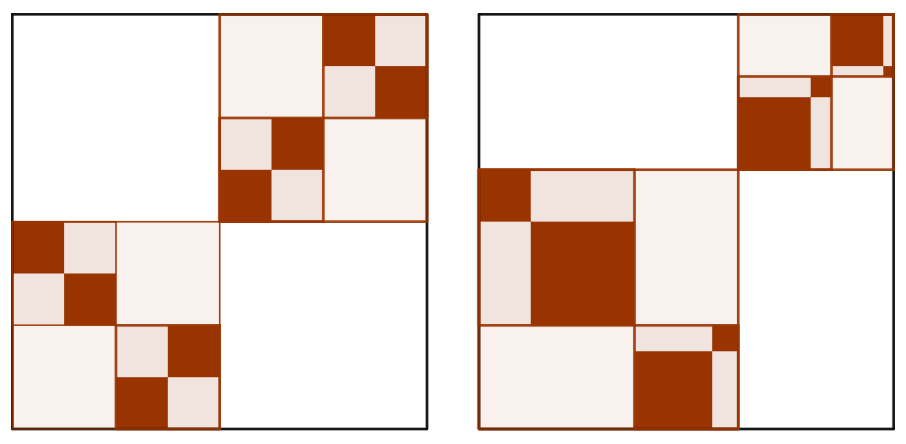}
\caption{(Left) The third approximation $\mu_3$ of an outcome of $\mu_F$ for $d=2$. The filled squares form the support, restricted to any of these $\mu_3$ is uniform. Shaded squares visualize $\mu_1, \mu_2$.  (Right) The third approximation of $\mu_{F, \Unif}$, where the random permutations are the same as on the left, but the sidelengths vary.}
\label{fig:random_permuton}
\end{figure}

As randomness in this construction comes from a sequence of i.i.d. choices, one expects $H_n(\mu)/n$ being almost surely convergent by being heavily concentrated for every $n$. We could only prove something weaker, i.e., that the linearly normalized sampling entropy sequence is asymptotically log-periodic (the precise definition is given later):

\begin{theorem}\label{thm:random_automorphism}
    For the random permuton $\mu=\mu_F$ induced by $F\in \PP(\Sym(d))$, $H_n(\mu)/n$ is asymptotically log-periodic in base $d$ with limit
    \begin{equation} \label{eq:aut_thm_statement}
    A(x)=\sum_{r=-\infty}^{\infty}\left(\sum_{l=1}^{\infty}\rho_{l+1}\frac{1-d^{-l}}{(l+1)! \log d}\Gamma\left(l-\frac{2\pi i r}{\log d}\right)\right)\exp (2\pi i r x),
    \end{equation}
    for some nonnegative sequence $\rho_l=O(\log l)$ determined by $F$. The average $\int_{0}^{1}A(x)dx$ vanishes if and only if
    $F$ is a Dirac measure on $(1,2\dots ,d)$ or $(d,d-1,\dots ,1)$. 
\end{theorem}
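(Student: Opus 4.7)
The plan is to reduce the computation of $\EE H_n(\mu_F)$ to a self-similar recursion, Poissonize it, and then apply a Mellin transform to extract the log-periodic asymptotics; the almost-sure statement for $H_n(\mu_F)/n$ then follows by concentration. Sample $n$ i.i.d.\ points from $\mu_F$ and let $N=(N_1,\dots,N_d)\sim\mathrm{Multi}(n,1/d,\dots,1/d)$ be the column counts at the first refinement level. Conditional on $\mu_F$ (hence on its root permutation $\pi_1$ and on the sub-permutons $\mu_F^{(j)}$, which by self-similarity are i.i.d.\ copies of $\mu_F$) and on $N$, the pattern $\Pi_n$ is a deterministic function of $d$ conditionally independent sub-patterns $\Pi^{(j)}$ of sizes $N_j$ sampled from $\mu_F^{(j)}$. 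The chain rule for entropy gives
\[
H_n(\mu_F)=H(N)+\sum_{j=1}^{d}\EE_{N_j}H_{N_j}(\mu_F^{(j)})-H(N\mid\Pi_n,\mu_F),
\]
and, averaging over $\mu_F$ and defining the averaged mutual information $I_n:=\EE[I(N;\Pi_n\mid\mu_F)]$, the sequence $e_n:=\EE H_n(\mu_F)$ obeys $e_n=I_n+d\sum_k\binom{n}{k}d^{-k}(1-d^{-1})^{n-k}e_k$. Poissonizing via $\tilde e(z)=e^{-z}\sum_n e_nz^n/n!$ and $\tilde I(z)$ likewise converts this to the clean functional equation $\tilde e(z)=\tilde I(z)+d\tilde e(z/d)$, which iterates to $\tilde e(z)=\sum_{k\ge 0}d^k\tilde I(z/d^k)$.

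The second step is a Mellin analysis of $\phi(z):=\tilde e(z)/z$. Since $I_0=I_1=0$ and $\int_0^{\infty}e^{-w}w^{m+\sigma-1}dw=\Gamma(m+\sigma)$, the harmonic-sum identity produces
\[
\phi^{*}(s)=\frac{\tilde I^{*}(s-1)}{1-d^{s}},\qquad \tilde I^{*}(\sigma)=\sum_{m\ge 2}\frac{I_m}{m!}\,\Gamma(m+\sigma),
\]
valid in the fundamental strip $-1<\mathrm{Re}(s)<0$. Shifting the inverse-Mellin contour rightward across the simple poles $s_r=-2\pi ir/\log d$ of $1/(1-d^{s})$, collecting residues, and substituting $z=d^{x}$ with the reindexing $l=m-1$, one obtains
\[
A(x)=\frac{1}{\log d}\sum_{r}\tilde I^{*}\!\Bigl(-1-\tfrac{2\pi ir}{\log d}\Bigr)e^{2\pi irx}=\sum_{r}\sum_{l\ge 1}\frac{I_{l+1}}{(l+1)!\,\log d}\,\Gamma\!\Bigl(l-\tfrac{2\pi ir}{\log d}\Bigr)e^{2\pi irx}.
\]
Setting $\rho_{l+1}:=I_{l+1}/(1-d^{-l})$ for $l\ge 1$ reproduces the formula in the theorem, and the bound $0\le I_l\le H(N_l)=O(\log l)$ yields both $\rho_l\ge 0$ and $\rho_l=O(\log l)$ automatically.

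Depoissonization in the style of Jacquet--Szpankowski transfers the asymptotic from $\tilde e(n)$ to $e_n$, and almost-sure convergence of $H_n(\mu_F)/n$ follows by exposing the random permutations $\pi_k^{(c)}$ in the refinement tree one level at a time and applying a McDiarmid/Azuma inequality: each $\pi_k^{(c)}$ can affect $\Pi_n$ only through the roughly $n/d^{k-1}$ points landing in cell $c$, so the squared martingale increments sum to $o(n^2)$ and $|H_n(\mu_F)-e_n|=o(n)$ almost surely. The degeneracy statement reduces to the observation that $\int_0^1 A(x)\,dx=\hat A(0)$ is a sum of nonnegative terms proportional to the $I_{l+1}$, hence vanishes iff $I_l=0$ for every $l\ge 2$, i.e.\ the pattern is independent of the column counts for every sample size; already the case $l=2$ forces $\mu_F$ to be a.s.\ monotone, and the recursive construction then pins $F$ down to $\delta_{(1,2,\dots,d)}$ or $\delta_{(d,d-1,\dots,1)}$. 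The most delicate step throughout is the Mellin/contour-shift analysis: justifying the termwise exchange and bounding the shifted integral uniformly in $z$ requires Stirling-type decay estimates on $\Gamma$ along vertical lines, together with enough control on $\tilde I^{*}$ to ensure the residue series converges.
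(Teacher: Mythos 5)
Your analytic route is genuinely different from the paper's. Where the paper rewrites $y_n=\EE H_n(\mu_F)$ as a weighted series $\sum_l\alpha_l(n)\rho_l$, interprets the coefficients $\alpha_l(n)$ via the simple binomial decay, and then cites the Eisenberg--Stengle--Strang asymptotics for $\delta_l(n)$ (plus a separate lemma on uniform convergence of series of log-periodic sequences), you inline the analytic machinery: Poissonize, iterate the functional equation $\tilde e(z)=\tilde I(z)+d\,\tilde e(z/d)$, and apply Mellin inversion with a rightward contour shift across the poles of $1/(1-d^s)$. This is arguably cleaner and more self-contained, and your identification of the error term as the averaged mutual information $I_n=\EE[I(N;\Pi_n\mid\mu_F)]$ is exactly the paper's $\EE Z_{n,1}$ in information-theoretic clothing; I checked that with $\rho_{l+1}=I_{l+1}/(1-d^{-l})$ your residue expansion reproduces the paper's Fourier coefficients. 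The steps you flag as delicate (decay of $\tilde I^*$ along vertical lines, justification of the contour shift and the termwise Mellin of the Poisson series, and depoissonization) are the genuine technical burden of this route, and are essentially the content that the paper offloads to the cited reference.

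The concentration step, however, has a real gap as stated. You claim that exposing the labels $\alpha_s$ one node at a time yields a Doob martingale whose squared increments sum to $o(n^2)$, on the grounds that a level-$(k-1)$ node only affects the $\approx n/d^{k-1}$ sample points that land in its cell. But using $n/d^{k-1}$ as the bounded difference gives $\sum_{k\ge1}d^{k-1}\bigl(n/d^{k-1}\bigr)^2=\tfrac{d}{d-1}\,n^2=\Theta(n^2)$, so McDiarmid only yields $\PP\bigl(|H_n-e_n|>tn\bigr)\le 2\exp(-\Theta(t^2))$, which does not go to zero and is useless for Borel--Cantelli. The correct bounded difference is in fact far smaller: relabelling a node $s$ only reorders its $d$ children, and one can set up a bijection $(\Pi_n,\text{child counts of }s)\leftrightarrow(\Pi_n',\text{child counts of }s)$, giving $|H(\Pi_n)-H(\Pi_n')|\le H(N_{s\oplus 0},\dots,N_{s\oplus(d-1)})=O(\log n)$ for shallow nodes and $O\bigl((n/d^k)\log(d^k/n)\bigr)$ for deep ones; summing squares over all nodes then gives $O(n(\log n)^2)=o(n^2)$, and McDiarmid works. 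So the conclusion is salvageable, but the argument you wrote does not establish it. The paper avoids this entirely by conditioning on everything above a single level $m$ with $d^m\ll\sqrt n$, so that $X_n$ is a sum of $d^m$ i.i.d.\ terms of size $O(n/d^m)$ plus a deterministically small $Z_{n,m}=O(d^m\log n)$, and then applying Hoeffding once --- which is a cleaner path to the same concentration bound and what I would recommend.
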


The second family we study is the natural random perturbation of $\mu_F$, defined using the following extra source of randomness: instead of using the $d^{-1}\times d^{-1}$ grid squares in the first refining step, we determine the sidelengths by drawing $d-1$ points independently and uniformly at random from the $x$-axis and adjust the squares to the resulting partition (see Figure \ref{fig:random_permuton}). We denote this permuton by $\mu_{F, \Unif}$, with a more formal definition given in the later sections. Given the difficulties arising in the study of the first setup, we find the next result miraculous:

\begin{theorem}\label{thm:random_automorphism2}
    There exists some deterministic $c\in [0, +\infty)$ such that for the random permuton $\mu=\mu_{F, \Unif}$ induced by $F\in \PP(\Sym(d))$, $H_n(\mu)/n\to c$ almost surely. Moreover, $c=0$ if and only if $F$ is a Dirac measure on $(1,2\dots ,d)$ or $(d,d-1,\dots ,1)$.
\end{theorem}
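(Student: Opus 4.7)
\textbf{Proof strategy for Theorem \ref{thm:random_automorphism2}.} My plan is to (i) write down the self-similar recursion satisfied by the conditional sampling entropy of $\mu_{F,\Unif}$, (ii) deduce convergence of the mean $A_n := \EE[H_n(\mu_{F,\Unif})]$ normalized by $n$ by resolving the recursion explicitly, (iii) upgrade to almost sure convergence by controlling variances via the tree-independence of the construction, and (iv) identify when $c = 0$. Conditioning on the top-level data $L = (L_1, \dots, L_d) \sim \mathrm{Dir}(1, \dots, 1)$ and $\pi_1 \sim F$, the $x$-coordinates of an $n$-sample are i.i.d.\ uniform, so the column occupancy $N = (N_1, \dots, N_d)$ is multinomial with parameters $(n; L)$, and given $N$ the induced sub-permutations in each column are independent samples of sizes $N_i$ from i.i.d.\ copies $\mu^{(i)}$ of $\mu_{F,\Unif}$. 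Since the global permutation $\pi_n$ almost surely reconstructs $(\pi_1, N, \sigma^{(1)}, \dots, \sigma^{(d)})$, this yields
$$H_n(\mu(\omega)) = H\bigl(\mathrm{Mult}(n; L(\omega))\bigr) + \sum_{i=1}^d \EE_{N_i \sim \Binom(n, L_i)}\, H_{N_i}\bigl(\mu^{(i)}(\omega^{(i)})\bigr),$$
whose multinomial entropy is always $O(\log n)$.

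Averaging over $\omega$ gives a closed recursion for $A_n$. The decisive simplification is that integrating out $L \sim \mathrm{Dir}(1, \dots, 1)$ turns the marginal of each $N_i$ into the uniform-composition law $\PP(N_i = k) = \binom{n-k+d-2}{d-2}/\binom{n+d-1}{d-1}$, so the recursion becomes an explicit linear convolution in $(A_k)$. For $d = 2$ it reads $nA_n = nb_n + \frac{2}{n+1}\sum_{k=0}^n k A_k$ with $b_n = O(\log n)$; setting $V_n := \sum_{k=1}^n k A_k \, / \binom{n+1}{2}$ telescopes the recursion into $V_n - V_{n-1} = O(\log n / n^2)$, so $V_n$ converges and $A_n/n \to c \in [0, \infty)$ follows by a Stolz--Ces\`aro step. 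The same idea using iterated $(d-1)$-th order differences handles general $d$. This is the \emph{main obstacle}: the abstract fixed-point equation $c = c$ arising from the recursion is tautological, so convergence genuinely relies on the explicit Dirichlet--uniform identity above, and this is precisely the smoothing feature absent in Theorem \ref{thm:random_automorphism}, where deterministic sidelengths yield only log-periodicity.

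To lift convergence of the mean to almost sure convergence, I iterate a variance recursion. Given $(L, \pi_1)$, the $d$ sub-sample contributions are independent, so
$$\Var[H_n(\mu) \mid L, \pi_1] \leq \sum_{i=1}^d \EE_{N_i \mid L_i}\, \Var[H_{N_i}(\mu)],$$
while the outer variance $\Var[\EE[H_n \mid L, \pi_1]]$ is controlled by step (ii), which yields $\EE_{N \sim \Binom(n, l)}[A_N] = cnl + O(\mathrm{polylog}\, n)$ whose fluctuation across the Dirichlet simplex is of lower order than $n$. Induction on $n$ then gives $\Var[H_n(\mu)] = O(n)$; along the subsequence $n_k = k^2$ the variances $\Var[H_{n_k}(\mu)/n_k]$ are summable, so Borel--Cantelli yields a.s.\ convergence on $(n_k)$, and the deterministic stability $|H_{n+1}(\mu) - H_n(\mu)| \leq 2\log(n+1)$ (valid for any fixed permuton, proved by tracking where the extra sample point inserts in both $x$-order and $y$-order) interpolates this to all $n$. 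Finally, if $F$ is the Dirac measure on $(1, 2, \dots, d)$ or $(d, d-1, \dots, 1)$ then monotonicity is preserved at every refinement, $\mu_{F,\Unif}$ is a.s.\ the identity or reverse permuton, and $H_n \equiv 0$; conversely, if $F$ puts mass $\eta > 0$ on any other permutation, the top split already produces an inversion with probability $\geq \eta$, which contributes $\Theta(n)$ entropy to $H_n$, forcing $c > 0$.
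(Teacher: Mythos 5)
Your overall architecture (recursion for the mean, concentration via the conditional independence of the $d$ sub-permutons, then patch together) matches the paper's, and two of your ideas are genuine alternatives: (a) the $d=2$ telescoping via $V_n = \sum_k kA_k / \binom{n+1}{2}$, which does collapse nicely, and (b) proving concentration by a variance recursion and Borel--Cantelli along a polynomial subsequence, rather than the paper's route of conditioning on the edge weights up to level $m$, controlling the squared gap sum $L_m$ and the minimal gap, and then applying Hoeffding. Your route avoids the paper's bookkeeping on the quality of the random partition, which is appealing.

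However, there are concrete gaps. First and most serious: for $d>2$ the sentence ``the same idea using iterated $(d-1)$-th order differences handles general $d$'' is carrying the entire weight of the hardest part of the argument. Applying $\Delta^{(d-1)}$ to the recursion $(n+d-1)_{d-1}\alpha_l(n)=d(d-1)\sum_k (n-k+d-2)_{d-2}\alpha_l(k)$ gives a genuine difference equation of order $d-1$, and one then needs to (i) exhibit $d-1$ linearly independent hypergeometric solutions, (ii) show that the characteristic ``polynomial'' $P(x,n)$ that emerges is actually $n$-independent (this is a nonobvious identity, verified in the paper by checking the $d+1$ lines $x=0,1,\dots,d$), and (iii) show via a Gauss--Lucas argument that $(s)_{d-1}=d!$ has $d-1$ distinct roots with all but $s=d$ having smaller real part, so that the only linearly growing solution is $\alpha_l(n)\propto n$. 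None of this is a straightforward ``same idea.'' Second: even granting $\alpha_l(n)/n\to\alpha_l$ for each fixed $l$, passing to the limit in $y_n/n = \sum_{l\ge 2}\frac{\alpha_l(n)}{n}\rho_l$ requires uniform-in-$n$ control of the tail $\sum_{l\ge l_0}$ because $\rho_l$ can grow like $\log l$; the paper handles this with a coupling to a ``simple partition decay'' process and a geometric-decay estimate on the container length of a fixed sample point, and your sketch does not address it at all. Third, the claimed $\Var[H_n(\mu)]=O(n)$ does not close the induction: plugging $\Var[H_k]\le Ck$ into your recursive bound returns $Cn$ plus positive lower-order terms, which is strictly larger than $Cn$. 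The fix is to aim for $\Var[H_n]=O(n^{\gamma})$ for some $1<\gamma<3/2$, which does close since $d\,\EE[L_1^{\gamma}] = \Gamma(1+\gamma)\Gamma(d+1)/\Gamma(d+\gamma)<1$ for $\gamma>1$, and it still suffices for your Borel--Cantelli along $n_k=k^2$ and the quasimonotonicity interpolation. Finally, ``the top split already produces an inversion with probability $\ge\eta$, which contributes $\Theta(n)$ entropy'' is a heuristic, not a proof; the paper instead shows $\rho_2>0$ together with $\liminf\alpha_l(n)/n>0$ via a convex-combination argument on the $\beta_l(n)=\alpha_l(n)/n$ recursion.
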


The marvelous feature of Theorem \ref{thm:random_automorphism2} is that it means that for almost every choice of the sidelengths, $H_n(\mu)/n$ converges to a fixed value, while Theorem \ref{thm:random_automorphism} displays that 
actually verifying this for a given fiber is highly nontrivial (the ones resulting in nonequipartitions seem even more cumbersome to approach). This theorem also highlights that the convergence of $H_n(\mu)/n$ is not exclusive to graphs of functions with good regularity, but can emerge from other convenient structural properties.

We are highly interested in the answer to the following questions:

\begin{quest}
    Can one fix a fiber of the random sidelength choices in the definition of $\mu_{F, \Unif}$ such that for the resulting random permuton $\mu$, $H_n(\mu)/n$ converges?

    Does $H_n(\mu_F)/n$ converges in Theorem \ref{thm:random_automorphism}?
\end{quest}

\subsection{Insights and further speculation}

First we note that the entropy of an absolutely continuous permuton with density $g$ have been defined in \cite{Kenyon2019} as its differential entropy $-\int_{[0, 1]^2} g\log g$. They used it in their study of permutons constrained by having fixed densities of a finite number of patterns, and showed that the limit shapes are determined by maximizing differential entropy over permutons with those constraints. However, this notion of entropy is not directly related to the sampling entropy sequence. While the former's natural habitat is by definition the class of absolutely continuous permutons, the latter exhibits quite interesting behavior for some natural families of singular permutons, as our theorems demonstrate.

The idea behind the aforementioned lack of equivalence between the Kolmogorov--Sinai entropy and the limit of $H_n(\mu)/n$ is that if $[0,1]$ decomposes into a sequence of disjoint invariant subsets over which $f$ is dynamically trivial ($|f'|=1$ except for a finite set of discontinuities), then the Kolmogorov--Sinai entropy vanishes, but if $\mu_f$ provides an ever-improving approximation over these invariant subsets of the Lebesgue measure, then $H_n(\mu)$ can be large for every $n$. More fundamentally, as hinted by Theorem \ref{thm:abs_cont}, sampling entropy can skyrocket due to very small perturbations as well, while Kolmogorov--Sinai entropy cannot under mild conditions. Due to this phenomenon, exploring the exact scope of Corollary \ref{cor:kolmogorov_sinai_entropy} would be welcome.

Another remarkable aspect of Corollary \ref{cor:kolmogorov_sinai_entropy} comes from the comparison to \cite{BandtKellerPompe}, where permutation entropy for interval maps is defined by considering the Shannon entropy of the random pattern determined by plotting the point sequence $(f^k(x), k)_{k=0}^{n-1}$ for the partial forward orbit of the uniform random point $x$ and reading the height ordering from the left to the right. For this concept, the same conclusion is achieved, i.e., that permutation entropy equals the Kolmogorov--Sinai entropy under mild conditions. The question of sufficient and necessary conditions for this equivalence is open as well, with a recent generalization given by \cite{GutjahrKeller}. Developing a better understanding of the relationship of these three approaches to entropy seems to be an exciting challenge.

\begin{quest}
    What are the limitations of Theorem \ref{thm:differentiable_maps} and Corollary \ref{cor:kolmogorov_sinai_entropy}? 

    Can a direct link be provided between our permuton entropy notion and permutation entropy in the sense of \cite{BandtKellerPompe}, without using Kolmogorov--Sinai entropy as a bridge?
\end{quest}

The following natural question interesting on its own right frequently recurs during our analysis: given some planar probability measures $\mu_i$ ($i=1, 2, \dots, m$) and a convex combination $\mu=\sum_{i=1}^{m}\alpha_i\mu_i$, what is the relationship between $H_n(\mu)$ and the $H_n(\mu_i)$s? In general, it is hard to say much: considering for example the two permutons concentrated on the diagonal and the antidiagonal, respectively, they both have vanishing entropy, yet their nontrivial convex combinations have linearly growing entropy, as a simple (but nontrivial) argument shows. This essentially falls under the scope of the technical Lemma \ref{lemma:entropy_of_convex_comb} unnecessary to be stated precisely now that in general, $H_n(\mu)$ can be expressed by the $H_k(\mu_i)$ ($k=0, \dots, n$) with at most linear error. However, as proved in Lemma \ref{lemma:geom_sep}, under some geometric conditions concerning the supports of the pre-permutons, this error collapses to have logarithmic size, giving much stronger control. The existence of such a formula means that if $\mu$ has a self-similar structure, then up to some accumulating error, the sequence $H_n(\mu)$ satisfies a recurrence relation, with the coefficients being compatible with $H_n(\mu)\sim Cn$. These observations will be vital in our study of the random permuton families we introduced.

Another way to describe the way we create $\pi_n$ in the construction of $\mu_F$ is via the substitution operation: for permutations $\sigma, \alpha_1, ..., \alpha_n$, the permutation $\sigma[\alpha_1, ..., \alpha_n]$ consists of $n$ consecutive subsequences isomorphic to $\alpha_1, ..., \alpha_n$ whose relative order is the same as the relative order of the terms of $\sigma$. With this notation, $\pi_n=\pi_{n-1}[\alpha_1, \dots, \alpha_{d^{n-1}}]$, where $\alpha_i\in \Sym(d)$ are drawn independently according to $F$. Note that $\mu_F$ can also be identified with a random automorphism of the rooted infinite $d$-ary tree $T(d)$. Indeed, such automorphisms are in one-to-one correspondence with $\Sym(d)$-labelling of the vertex set, describing how branches going out from the given vertex are permuted. Specifying the permutations in the level $n$ vertices (with the root being at level 0) corresponds to specifying $\pi_{n+1}$ in the construction above. From this perspective, our permuton construction corresponds to the random automorphism of the $d$-ary tree where branches from any vertex permuted with law given by $F$.

By a simple observation, the patterns of the random permutons $\mu_{F}$ and $\mu_{F,\Unif}$ form a pattern-avoiding permutation class $X$ which is substitution-closed almost surely, i.e., for $\sigma, \alpha_1, ..., \alpha_n\in X$, $\sigma[\alpha_1, ..., \alpha_n]\in X$ holds as well. This concept was introduced by \cite{ALBERT20051}, an important example is given by the class of separable permutations, i.e., permutations avoiding the patterns 3142 and 2413. (Here and later as well we always use the one-line notation, but for better readability, we sometimes deploy parentheses and commas.) Probably the most well-known permuton-related result about such classes was given by \cite{BrownianSep}, where it is shown that taking the uniform random permutation from a substitution-closed class subject to mild conditions, the limit is a random permuton coming from a one-dimensional family of deformations of the Brownian separable permuton, originally introduced as the limit of the uniform random separable permutations. 

We note that $\mu_F$ and $\mu_{F,\Unif}$ being pattern-avoiding immediately yields that their entropy grows at most linearly by the celebrated theorem of Marcus and Tardos, proving the famous conjecture of Richard Stanley and Herbert Wilf proposed around 1992 that the number of permutations avoiding a pattern $\sigma$ is $s_n(\sigma)\sim c(\sigma)^n$ (\cite{MARCUS2004153}, for a short summary of related conjectures, we also refer to this paper). This result can in fact be interpreted that pattern avoidance imposes a linear bound on the entropy, however, simple constructions exhibit that this is not reversible \cite[Theorem~4.1]{TDK}.

By asymptotic log-periodicity in Theorem \ref{thm:random_automorphism}, we mean the following:

\begin{defi}\label{def:log_periodic}
    We say that a sequence $a(n)$ is {\it asymptotically log-periodic} with base $\eta$ if the sequence of functions $A^{(m)}$ defined by
    $$A^{(m)}(x)=a([\eta^{x+m}])$$
    is locally uniformly convergent in $\mathbb{R}$. The limit $A(x)$ is called the {\it log-periodic limit} of $a(n)$ in base $\eta$.
\end{defi}

As we will see, the increments $H_{n+1}(\mu) -H_n(\mu)$ are of $O(log(n))$, which quickly yields that log-periodicity proven above is roughly the best amount of regularity one can hope for in the lack of convergence. If the log-periodic limit exists, it must be periodic by 1, thus we could have defined the log-periodic limit via uniform convergence in $[0, 1]$. The sequence $a(n)$ is convergent if and only if its asymptotic log-periodic limit is constant, which can be read from its Fourier expansion. Thus $H_n(\mu_F)=Cn+o(n)$ if and only if the Fourier coefficients in \eqref{eq:aut_thm_statement} vanish for $r\neq 0$. Seemingly there is no fundamental reason why this should happen, fueling the conjecture that it is not the case, however, dealing with this question seems to be very difficult. Solely comparing the magnitude of the terms in the series
$$\sum_{l=1}^{\infty}\rho_{l+1}\frac{1-q^l}{(l+1)! |\log q|}\Gamma\left(l+\frac{2\pi i r}{\log q}\right)$$
is insufficient to decide if it vanishes, thus a precise knowledge of their exact value seems to be necessary, which is difficult not primarily because of the hardly accessible $\Gamma$ terms, but because of the hardly describable and computable $\rho$s. As we will see, these coefficients involving the $\Gamma$ function come from the study of some surprisingly nonconvergent self-averaging sequences. The same setup is studied in \cite{ESS1993}, a more general setup is considered and the corresponding literature about nonconvergent self-averaging sequences is overviewed to some extent in \cite{self-averaging}. The key takeaway of that paper is that if a sequence is defined by a recurrence where each term is some convex combination of earlier terms, or in a probabilistic language, $a(n) = \EE[a(X_n)]$ for a random variable with values in $[n-1]$, then if $\EE X_n \approx \beta n$ and $\mathrm{Var} X_n\ll n^2$ (i.e. the fluctuations are too small to evenly cover $[n-1]$), then it is natural to expect that $a(n)$ does not converge, contrary to the other extreme when $X_n$ is uniform. Studying the entropy of $\mu_F$ leads to the study of the self-averaging sequence where $X_n$ is essentially $\Binom(n, 1/d)$, concentrated too heavily around $n/d$ to derail convergence. Introducing the random perturbation giving rise to $\mu_{F, \Unif}$ smoothes out this adversarial effect, yielding self-averaging sequences which actually converge.

We note that log-periodic fluctuations also arise as error terms in analysis of algorithms from similar calculations \cite{logperiodicalgorithms}, though the customarily used definition in that literature is that a sequence $a(n)$ is asymptotically log-periodic in base $\eta$ if $a(n)=A(\log_\eta n)+o(1)$ for some function $A$ periodic by 1. We use the stronger Definition \ref{def:log_periodic} as it implies the uniqueness of the limit function, making the discussion slightly more convenient, while the latter allows some ambiguity if $A$ is allowed to be discontinuous.

Finally, we remark some loose parallels with the findings of \cite{DHM2015}. Notably, $H_n(\mu)/n$ possibly admitting quite wild oscillations (Theorem \ref{thm:generic_lack_of_entropy}) somewhat resembles \cite{DHM2015}[Proposition~2.1], in which the size of the largest clique of subgraphs sampled from graphons are studied, and it turns out that this quantity can have essentially arbitrarily bad oscillations asymptotically almost surely. Moreover, 
Theorem \ref{thm:random_automorphism} resonates with \cite{DHM2015}[Theorem~2.2]: while the largest clique size can oscillate, for fixed $n$ it is heavily concentrated. This is a loose parallel as the source of randomness is different: in case of \cite{DHM2015}[Theorem~2.2], it comes from the sampling, while in our case, it is encoded in the permuton being random.

\subsection{The organization of the paper}

Before starting the bulk of discussion, in Section \ref{sec:prelim}, some fundamental results about permutons and entropy are recalled, and also some, later quite important technical lemmas are proved concerning the sampling entropy sequence, such as basic bounds on its increments and its interaction with taking convex combinations.

Section \ref{sec:abs_cont} is devoted to the proof of Theorem \ref{thm:abs_cont}, carried out with ease building upon the findings of Section \ref{sec:prelim}.

In Section \ref{sec:kolmogorov_sinai}, we prove Theorem \ref{thm:differentiable_maps}.

In Section \ref{sec:generic}, we prove Theorem \ref{thm:generic_lack_of_entropy}.

In Section \ref{sec:random}, after investigating the almost sure pattern structure of the random permuton family introduced in the previous subsection and invoking some results about self-averaging sequences, the proof of Theorem \ref{thm:random_automorphism} is presented.

In Section \ref{sec:random2}, building upon the progress in Section \ref{sec:random}, we prove Theorem \ref{thm:random_automorphism2}.

\section{Preliminaries} \label{sec:prelim}

\subsection{Permutons} \label{subsec:prelim_perm}

To get to the limit theory of permutations, we first need a notion of sampling and substructure densities. Given a permutation $\pi$ of $[n]$, that is $\pi\in \Sym(n)$, its restriction to any $k$-element subset $I$ of $[n]$ gives rise to a permutation in $\Sym(k)$, i.e., the one to which it is order isomorphic. Iterating over the possible $k$-element subsets, the relative frequency of $\sigma\in \Sym(k)$ denoted by $t(\sigma, \pi)$ is the density of $\sigma$ in $\pi$. These densities form a probability distribution on $\Sym(k)$.

Permutons as limits of permutations were introduced in \cite{HOPPEN201393}. A sequence $(\pi_n)$ of permutations is convergent if $t(\sigma, \pi_n)$ converges for any pattern $\sigma$. If $|\pi_n|\to\infty$, with any such sequence one can associate a permuton, that is a probability measure in the unit square with uniform marginals, and any permuton arises as the limit of some convergent sequence of permutations. Pattern densities can also be defined in permutons: for a permuton $\mu$ and $\sigma\in \Sym(n)$, the density $t(\sigma, \mu)$ is defined as the probability of seeing the permutation $\sigma$ upon sampling $n$ points from $\mu$ and reading their height ordering from left to right. Denote the resulting distribution on $\Sym(n)$ by $\mu^{(n)}$. This sequence is far from being arbitrary: it is immediate that it must satisfy a system of compatibility conditions, stemming from the identity
\begin{equation} \label{eq:compatibility_conditions}
t(\sigma, \mu) = \sum_{\pi\in \Sym(n)}t(\sigma, \pi)t(\pi, \mu) 
\end{equation}
holding for any $k\leq n$, $\sigma\in \Sym(k)$, $\pi\in \Sym(n)$. As displayed by \cite{Král2012}, much more sophisticated statements can be made, i.e., if $\mu^{(4)}$ is uniform, then $\mu$ is the Lebesgue measure, and hence any $\mu^{(k)}$ is uniform. 

Slightly extending our scope, the sequence $\mu^{(n)}$ can be defined for any planar probability measure $\mu$ with atomless marginals, ruling out collisions. Such measures are called pre-permutons in \cite{Dubach2023LocallyUR}, this more general point of view will prove to be useful.

We say that a permuton $\mu$ avoids the pattern $\sigma$ if $t(\sigma, \mu)=0$. Otherwise $\mu$ contains $\sigma$. Patterns contained by a permuton trivially form a permutation class, i.e., a family of permutations closed under taking subpermutations. Such a class can be defined by giving a set of excluded patterns, which can be finite or infinite. Studying pattern avoidance in the context of permutons has been the subject of \cite{Garbe2024}, which provided a structure theorem, stating that almost every fiber of the disintegration of a $\sigma$-avoiding permuton consists of at most $|\sigma|$ atoms, and applied this theorem to give a soft analysis proof of a removal lemma. 

By appropriately rescaling its permutation matrix, with a permutation $\pi$ one can naturally associate an absolutely continuous permuton. More specifically, the associated permuton $\mu_\pi$ has density function
\[
g(x, y) =
\begin{cases}
    n\text{ if } (x, y)\in \left[\frac{i-1}{n}, \frac{i}{n}\right] \times \left[\frac{\pi(i)-1}{n}, \frac{\pi(i)}{n}\right]\text{ for some $i\in[n]$},\\
    0\text{ otherwise.}
\end{cases}
\]
We call the $1/n$ grid squares in the support of $g$ the \textit{base squares} of $\pi$ or $\mu_\pi$.

As proven in \cite{HOPPEN201393}, a sequence of permutations $(\pi_n)$ converges to $\mu$ if and only if $\mu_{\pi_n}$ tends to $\mu$ weakly. Moreover, in the space of permutons the following equivalence of topologies is proven:

\begin{prop}[{{\cite[Lemma~5.3]{HOPPEN201393}}}]\label{prop:equiv_topologies}
    For the permutons $\mu, (\mu_n)_{n=1}^{\infty}$, the following are equivalent:
    \begin{itemize}
        \item $\mu_n\to \mu$ weakly.
        \item $t(\pi, \mu_n)\to t(\pi, \mu)$ for any pattern $\pi$.
        \item $d_{\square}(\mu_n, \mu)\to 0$, where $d_{\square}(\nu_1, \nu_2) = \sup_{R=[x_1, x_2]\times [y_1, y_2]\subseteq[0,1]^2} |\nu_1(R)-\nu_2(R)|$.
    \end{itemize}
\end{prop}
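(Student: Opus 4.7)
The plan is to establish $(1)\Leftrightarrow(3)$ directly via cumulative distribution functions, $(1)\Rightarrow(2)$ via weak convergence of product measures, and the harder $(2)\Rightarrow(1)$ via weak sequential compactness combined with a uniqueness lemma. Throughout I would exploit the defining feature of permutons: their marginals are Lebesgue, hence atomless.

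For $(1)\Leftrightarrow(3)$, I would set $F_\mu(x,y)=\mu([0,x]\times[0,y])$; atomless marginals ensure $F_\mu$ is continuous on the compact square. Weak convergence then forces $F_{\mu_n}\to F_\mu$ pointwise (every axis-parallel line is $\mu$-null, so each rectangle $[0,x]\times[0,y]$ is a continuity set), and a P\'olya-type upgrade for coordinate-wise monotone functions with continuous limit on compacta promotes this to uniform convergence; since any rectangle measure is a signed sum of four CDF values, $d_\square(\mu_n,\mu)\to 0$ follows. Conversely, $d_\square$-convergence controls rectangle integrals uniformly, and approximating arbitrary $\varphi\in C([0,1]^2)$ by step functions on fine rectangular grids recovers $\int\varphi\,d\mu_n\to\int\varphi\,d\mu$. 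For $(1)\Rightarrow(2)$, I would express $t(\sigma,\mu)$ as the $\mu^{\otimes|\sigma|}$-integral of the indicator of the event ``the $|\sigma|$ sampled points form pattern $\sigma$''; its discontinuity set sits in $\bigcup_{i\neq j}(\{x_i=x_j\}\cup\{y_i=y_j\})$, a $\mu^{\otimes|\sigma|}$-null set by atomless marginals, so the Portmanteau theorem applied to the (weakly convergent) product measures gives $t(\sigma,\mu_n)\to t(\sigma,\mu)$.

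The deepest direction is $(2)\Rightarrow(1)$, which I would handle by the standard subsequence-plus-uniqueness trick. Weak sequential compactness of probability measures on the compact square $[0,1]^2$ lets me extract from any subsequence of $(\mu_n)$ a further sub-subsequence $\mu_{n_k}\rightharpoonup\nu$; continuity of the coordinate projections makes $\nu$'s marginals Lebesgue, so $\nu$ is a permuton. By the already-proven $(1)\Rightarrow(2)$, $t(\sigma,\mu_{n_k})\to t(\sigma,\nu)$, whence $t(\sigma,\nu)=t(\sigma,\mu)$ for every pattern $\sigma$. The main obstacle is then the injectivity lemma that pattern densities uniquely determine a permuton; I would prove it by invoking the already-cited \cite[Lemma~4.2]{HOPPEN201393}, stating that $\mu_{\pi_n}\to\mu$ weakly almost surely, applied in parallel to $\mu$ and $\nu$: since the distribution of the sampled permutation $\pi_n$ depends on the underlying permuton only through its pattern densities, $\mu_{\pi_n(\mu)}$ and $\mu_{\pi_n(\nu)}$ are equidistributed, and their almost sure weak limits $\mu$ and $\nu$ must coincide. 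A standard subsequence argument then delivers $\mu_n\to\mu$ weakly, closing the loop.
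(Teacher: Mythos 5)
The paper does not prove this proposition; it is cited verbatim from \cite[Lemma~5.3]{HOPPEN201393}, so there is no in-text argument to compare against, and your proposal must be judged on its own merits. Your reconstruction is sound. The $(1)\Leftrightarrow(3)$ argument via joint CDFs (continuity of $F_\mu$ from atomless marginals, pointwise convergence at continuity sets, a bivariate P\'olya argument upgrading to uniform convergence, inclusion--exclusion to recover rectangle measures, and step-function approximation for the converse) is exactly the standard route. The $(1)\Rightarrow(2)$ step via the Portmanteau theorem applied to the $k$-fold product measures is clean and correctly identifies the discontinuity locus of $\mathbf{1}_{T_\sigma}$ as contained in the coordinate-tie set, which is null because the marginals are Lebesgue. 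The most delicate point is $(2)\Rightarrow(1)$, and in particular the identifiability lemma that pattern densities determine a permuton. Your move of deducing this from the a.s. sampling convergence of \cite[Lemma~4.2]{HOPPEN201393} is elegant: since the law of $\pi_n$ is exactly the pattern-density distribution $\mu^{(n)}$, two permutons with the same densities generate equidistributed random sampled permutons, whose a.s. weak limits (hence limits in distribution) must agree, forcing the two permutons to coincide. This avoids a separate moment-identifiability argument. The only thing to flag is that this is not circular only because Lemma~4.2 in the cited source is established independently of Lemma~5.3 (via $d_\square$-concentration of samples), a dependency you should make explicit if writing this up in full. Otherwise the subsequence-extraction step via Prokhorov compactness and the closedness of the permuton class under weak limits are standard and correct.
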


Using the second form of this topology, it is immediate that $H_n$ is a continuous functional for any $n$.

\subsection{Basic properties of entropy}

Let $\psi(x)=-x\log x$. The following estimates are essentially the standard bounds on conditional entropy, written in the form of entropy bounds for convex combination of probability measures:
\begin{lemma}\label{lemma:basic_entropy_estimates}
Let $(\nu_k)_{k=1}^{m}$ be probability measures on the same finite ground set $E$ and $(\alpha_k)_{k=1}^{m}$ a nonnegative vector with $\ell_1$ norm $\|\alpha\|_1$. Then
$$\sum_{k=1}^{m}{\alpha_k}H(\nu_k)\leq \|\alpha\|_1 H\left(\frac{\sum_{k=1}^{m}{\alpha_k}\nu_k}{\|\alpha\|_1}\right)\leq \sum_{k=1}^{m}{\alpha_k}H(\nu_k) +\|\alpha\|_1 H\left(\frac{(\alpha_k)_{k=1}^m}{\|\alpha\|_1}\right)$$
\end{lemma}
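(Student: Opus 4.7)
The plan is to reduce both inequalities to the case $\|\alpha\|_1 = 1$ by observing that every term appearing in the statement scales linearly in $\|\alpha\|_1$, so that $(\alpha_k)$ can be assumed to be a genuine probability distribution on $[m]$. In this normalized setting, the standard trick is to view the mixture $\nu = \sum_k \alpha_k \nu_k$ as the marginal distribution of a two-stage random variable $(K, X)$ taking values in $[m] \times E$, where $K$ has law $(\alpha_k)$ and, conditionally on $K = k$, $X$ is drawn from $\nu_k$. With this setup the two expressions appearing on the right-hand side of the inequalities acquire clean probabilistic meanings: $\sum_k \alpha_k H(\nu_k) = H(X \mid K)$ and $H((\alpha_k)_k) = H(K)$.

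Both inequalities then follow from the chain rule $H(K, X) = H(K) + H(X \mid K) = H(X) + H(K \mid X)$. The lower bound is precisely the fact that conditioning does not increase entropy, i.e.\ $H(X) \geq H(X \mid K)$, which could equivalently be derived from Jensen's inequality applied to the concave function $\psi$ pointwise over $E$. The upper bound is obtained from the same identity by discarding the nonnegative term $H(K \mid X)$, yielding $H(X) \leq H(K) + H(X \mid K)$. I do not foresee a real obstacle: the lemma is essentially a repackaging of well-known conditional entropy inequalities, and the only point requiring some care is to track the linear factor $\|\alpha\|_1$ as the normalization is undone at the very end.
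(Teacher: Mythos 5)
Your argument is correct and is precisely the one the paper has in mind: the paper introduces the lemma as ``essentially the standard bounds on conditional entropy, written in the form of entropy bounds for convex combination of probability measures,'' which is exactly your reduction to $\|\alpha\|_1=1$ followed by the identification $\sum_k \alpha_k H(\nu_k) = H(X\mid K)$, $H((\alpha_k))=H(K)$, and the two inequalities $H(X\mid K)\le H(X)\le H(K)+H(X\mid K)$ from the chain rule. No gaps.
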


We use the notation $\lowersupent(\mu) = \liminf \frac{H_n(\mu)}{n\log n}$ and $\uppersupent(\mu) = \limsup \frac{H_n(\mu)}{n\log n}$ for what we call lower and upper superlinear entropy. If these coincide, we say that $\mu$ has superlinear entropy, denoted by $\supent(\mu)$. In a similar virtue we speak of lower and upper linear entropy ($\lowerlinent(\mu)$, $\upperlinent(\mu)$), and if they coincide, we say that $\mu$ has linear entropy, denoted by $\linent(\mu)$. 

We also take note of the fact that for any probability measure $\mu$ and $\pi\in\Sym(k)$, we can express $t(\pi, \mu)$ as a $k$-fold integral over a suitable set: if
$$T_{\pi} = \{(x_1, y_1), ..., (x_k, y_k)\ \text{s.t. they determine pattern}\ \pi\}\subseteq \mathbb{R}^{2k}$$
then
$$t(\pi, \mu)=\int_{T_{\pi}}d\mu(x_1, y_1)...d\mu(x_k, y_k).$$
This has the following handy corollary, due to the change of measures formula:

\begin{lemma} \label{lemma:abs_cont_bound}
    Assume $\mu_1 \ll \mu_2$, and that for $g=\frac{d\mu_1}{d\mu_2}$, we have $1/K<g<K$. Then
    $$K^{-k} t(\pi, \mu_2) \leq t(\pi, \mu_1) \leq K^k t(\pi, \mu_2).$$
    Consequently,
    $$H_k(\mu_2) -  k \log K \leq H_k(\mu_1) \leq H_k(\mu_2) + k \log K,$$
    and
    $$\lowersupent(\mu_1)=\lowersupent(\mu_2), \ \uppersupent(\mu_1)=\uppersupent(\mu_2)$$
\end{lemma}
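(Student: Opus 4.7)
My plan is to chain three steps, each one a direct consequence of the previous.

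First, I would establish the pattern density inequality as a pure change-of-measure estimate. Recall from the paragraph preceding the statement that $t(\pi,\nu) = \int_{T_\pi} d\nu^{\otimes k}$ for an explicit pattern set $T_\pi \subseteq [0,1]^{2k}$. The product measure $\mu_1^{\otimes k}$ is absolutely continuous with respect to $\mu_2^{\otimes k}$ with Radon-Nikodym derivative $G(x_1,y_1,\dots,x_k,y_k) = \prod_{i=1}^k g(x_i,y_i)$, which the hypothesis on $g$ bounds pointwise by $K^{-k} < G < K^k$. Integrating this against the indicator of $T_\pi$ yields the stated bound at once.

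Next, I would derive the entropy inequality from the pointwise log-ratio control $|\log t(\pi,\mu_1) - \log t(\pi,\mu_2)| \leq k \log K$ provided by the first step. Setting $p_\pi = t(\pi,\mu_1)$, $q_\pi = t(\pi,\mu_2)$, and $r_\pi = p_\pi/q_\pi \in (K^{-k},K^k)$, the identity
$$H_k(\mu_1) - H_k(\mu_2) = -\mathrm{KL}(p\|q) + \sum_\pi q_\pi(r_\pi - 1)(-\log q_\pi)$$
reduces the problem to two estimates. The first term is bounded in absolute value by $k \log K$ since $|\log r_\pi| \leq k \log K$; for the second, one exploits the mean-zero constraint $\sum_\pi q_\pi(r_\pi - 1) = 0$, which allows subtracting any constant from each $-\log q_\pi$ before bounding (for instance the $q$-average $H_k(\mu_2)$). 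Once the entropy bound $|H_k(\mu_1) - H_k(\mu_2)| \leq k \log K$ is in hand, the equality of upper and lower superlinear entropies is immediate, since the $O(k)$ discrepancy is $o(k \log k)$ and therefore disappears after normalising by $k \log k$.

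The main technical obstacle is the second term of the entropy identity. A naive pointwise estimate via $|r_\pi - 1| \leq K^k - 1$ loses an exponential factor, so a cancellation-sensitive argument invoking the zero-sum constraint is essential; it is not completely clear a priori that this cancellation yields exactly $k \log K$ in place of $\log K$ times a weaker factor. If the direct manipulation proves stubborn, a fallback is a coupling-based route: the measures $\mu_1$ and $\mu_2$ can be coupled to agree coordinate-wise with probability at least $1/K$, so $k$-tuples drawn from the two laws coincide with probability at least $K^{-k}$, and the entropies of the induced pattern distributions can be compared via a Fannes-type inequality applied to the joint law.
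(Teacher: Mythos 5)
Your first step is correct and is exactly the intended change-of-measure argument: $d\mu_1^{\otimes k}/d\mu_2^{\otimes k}=\prod_i g(x_i,y_i)\in(K^{-k},K^k)$, and integrating over $T_\pi$ gives the pattern-density bound. The trouble lies entirely in your second step, and you are right to flag it as ``not completely clear,'' because the proposed route does not close.

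Concretely, in the decomposition $H(p)-H(q)=-\mathrm{KL}(p\|q)+\sum_\pi q_\pi(r_\pi-1)(-\log q_\pi)$, the KL term is indeed in $[0,k\log K]$, but the second term is the real obstacle. The zero-sum trick only lets you replace $-\log q_\pi$ by $-\log q_\pi - C$; you are then left bounding $\sum_\pi q_\pi(r_\pi-1)(-\log q_\pi-C)$ where $r_\pi - 1$ ranges over the \emph{exponentially} wide interval $[K^{-k}-1, K^k-1]$ and $-\log q_\pi-C$ is unbounded, so there is no evident way to extract a bound of order $k\log K$ rather than, say, $K^k$ times something. The fallback you sketch does not help either: Fannes/Audenaert-type inequalities bound $|H(p)-H(q)|$ by a total-variation term multiplied by $\log|\mathrm{support}|$, and here the support is $\Sym(k)$, so $\log|\Sym(k)|=\log k!\sim k\log k$ swamps $k\log K$.

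In fact, the entropy inequality $|H(p)-H(q)|\le L$ is \emph{not} a formal consequence of the pointwise bound $e^{-L}q_\pi\le p_\pi\le e^L q_\pi$ on a general finite ground set: take a ground set of size $N+1$ with $q=(\tfrac{1}{2N},\dots,\tfrac{1}{2N},\tfrac12)$ and $p=(\tfrac{3}{4N},\dots,\tfrac{3}{4N},\tfrac14)$. Then all ratios $p_\pi/q_\pi$ lie in $\{3/2,1/2\}\subset[e^{-\log 2},e^{\log 2}]$, yet $H(p)-H(q)=\tfrac14\log N+\log 2-\tfrac34\log 3$, which exceeds $\log 2$ already for $N\ge 30$ and grows without bound. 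So no manipulation of $p$ and $q$ alone, however clever, can give the stated $k\log K$ bound; any correct proof must use more than the first inequality. The estimates that \emph{do} follow pointwise from $K^{-k}q\le p\le K^k q$, namely $K^{-k}H_k(\mu_2)-k\log K\le H_k(\mu_1)\le K^k H_k(\mu_2)+k\log K$, carry a multiplicative factor $K^{\pm k}$ that kills the conclusion about superlinear entropies after dividing by $k\log k$. You should therefore not try to push your identity further; instead look for an argument that exploits additional structure of the pattern distributions (e.g., that in the application $\mu_2$ is the normalized Lebesgue measure on a set of positive measure, whose pattern distribution is close to uniform), or flag the entropy bound as an issue to resolve.
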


The monotonicity of $H_k$ seems to be a subtle problem: while the previous results follow from quite general observations about entropy, monotonicity should not as the sampling entropy sequence of a permutation is not monotone. In a related observation, note that $H_k$ cannot be submodular or supermodular in general due to comparing Fekete's lemma and Theorem \ref{thm:generic_lack_of_entropy}. Nevertheless we conjecture that monotonicity holds, more specifically, for fixed $k$ and a long enough permutation $\pi$ of length $n>N_k$, we have $H_k(\pi)\leq H_{k+1}(\pi)$, but proving this seems to require a deeper understanding of pattern density sequences. However, a quasimonotonicity result is easy to prove and provides sufficient regularity for certain applications:

\begin{lemma} \label{lemma:entropy_quasi_mon}
For any pre-permuton $\mu$
    $$H_{k-1}(\mu)-\log k \leq H_k(\mu) \leq H_{k-1}(\mu)+\log k.$$
\end{lemma}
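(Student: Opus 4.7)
The plan is to realize both inequalities as consequences of the standard chain rule applied to a natural coupling between $\mu^{(k)}$ and $\mu^{(k-1)}$. Draw a random permutation $X \in \Sym(k)$ with law $\mu^{(k)}$ and, independently, an index $I$ uniform on $[k]$; let $Y \in \Sym(k-1)$ be the pattern obtained from $X$ by deleting position $I$ (in both the domain and range). The compatibility identity \eqref{eq:compatibility_conditions} for $n=k$, $\sigma \in \Sym(k-1)$ is precisely the statement that $Y$ has law $\mu^{(k-1)}$, so $H(X) = H_k(\mu)$ and $H(Y) = H_{k-1}(\mu)$.

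The essential combinatorial observation is that the triple $(Y, I, X(I))$ determines $X$: knowing the pattern $Y$ on the remaining $k-1$ positions together with the deleted position $I$ and its value $X(I) \in [k]$ allows one to reinsert the deleted entry unambiguously. Since $X(I)$ takes at most $k$ values, this gives $H(X \mid Y, I) \leq \log k$. Similarly, $Y$ is a deterministic function of $(X, I)$, so $H(Y, I \mid X, I) = 0$ and hence $H(Y, I) \leq H(X, I)$.

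Now I expand $H(X, I)$ in two ways. Independence of $X$ and $I$ yields $H(X, I) = H_k(\mu) + \log k$, while the chain rule gives $H(X, I) = H(Y, I) + H(X \mid Y, I)$. Combining with the bound $H(X \mid Y, I) \leq \log k$ and $H(Y, I) \leq H(Y) + \log k$, the first form produces
\[
H_k(\mu) + \log k \;=\; H(Y, I) + H(X \mid Y, I) \;\leq\; \bigl(H_{k-1}(\mu) + \log k\bigr) + \log k,
\]
which is the upper bound $H_k(\mu) \leq H_{k-1}(\mu) + \log k$. For the lower bound, $H(Y, I) \leq H(X, I) = H_k(\mu) + \log k$ combined with $H(Y, I) \geq H(Y) = H_{k-1}(\mu)$ directly yields $H_k(\mu) \geq H_{k-1}(\mu) - \log k$.

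There is essentially no real obstacle here: the proof is a standard chain-rule exercise once the coupling is identified. The only point worth being careful about is verifying that $Y \sim \mu^{(k-1)}$ under the chosen coupling, which is immediate from the sampling definition (dropping one of $k$ i.i.d. samples from $\mu$ leaves $k-1$ i.i.d. samples), and that $X(I)$ really does range over $[k]$ so the entropy contribution is at most $\log k$ rather than something larger.
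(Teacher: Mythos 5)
Your proof is correct. The key identity $H(X,I) = H_k(\mu) + \log k$ and the chain-rule decomposition $H(X,I) = H(Y,I) + H(X \mid Y,I)$ both hold, the coupling makes $Y \sim \mu^{(k-1)}$ as you observe, and the reconstruction claim that $(Y,I,X(I))$ determines $X$ is valid: given the deleted position, its value, and the pattern of the surviving entries, the values at the surviving positions must be the order-isomorphic filling-in of $[k]\setminus\{X(I)\}$, so $H(X\mid Y,I)\leq \log k$. (There is a harmless typo, $H(Y,I\mid X,I)=0$ where you mean that $(Y,I)$ is a deterministic function of $(X,I)$.)

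Your route differs from the paper's in a way worth pointing out. The paper proves the two inequalities separately with distinct tools: for the lower bound it averages the map $\pi_k\mapsto\pi_k^I$ over $(k-1)$-subsets $I$ and invokes the convex-combination entropy bound (Lemma~\ref{lemma:basic_entropy_estimates}); for the upper bound it builds the transition matrix $M$ taking $\mu^{(k)}$ to $\mu^{(k-1)}$, stacks $k$ scaled copies to form a doubly stochastic matrix $M_*$, and applies the entropy-increasing property of doubly stochastic maps. Your argument replaces both with a single chain-rule computation on the joint variable $(X,I)$, with one extra combinatorial observation (the reconstructability of $X$ from $(Y,I,X(I))$) supplying the upper bound. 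This is a unified and arguably more transparent argument, at the cost of requiring you to see the right coupling; the paper's matrix formulation is less conceptual but packages the upper bound as an application of the same Lemma~\ref{lemma:basic_entropy_estimates} used throughout. Both are short and both are fine.
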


\begin{proof}
    For the first inequality take $\pi_k\in \Sym(k)$ according to $\mu^{(k)}$, and for any index set $I\in \binom{[k]}{k-1}$ define $\pi_k^{I}$ as the subpermutation determined by the entries in $I$. Then $\pi_k\mapsto\pi_k^{I}$ is a deterministic mapping, hence $H(\pi_k^{I})\leq H(\pi_k)$, and for the distributions $\mu^{(k),I}$ of $\pi_k^{I}$, we have 
    $$\frac{1}{k}\sum_{I\in\binom{[k]}{k-1}}\mu^{(k),I}=\mu^{(k-1)}.$$
    Thus by Lemma \ref{lemma:basic_entropy_estimates}, we see
    $$H(\mu_{k-1})- H((1/k)_{i=1}^{k})\leq \frac{1}{k}\sum_{I\in\binom{[k]}{k-1}}H(\mu^{(k),I})\leq H(\mu^{(k)}).$$
    Plugging in $H((1/k)_{i=1}^{k}) = \log k$ concludes the first part.

    For the second part observe that by \eqref{eq:compatibility_conditions} there is a matrix $M\in \mathbb{R}^{\Sym(k)\times \Sym(k-1)}$ mapping the probability vector $\mu^{(k)}$ to $\mu^{(k-1)}$ in which each column sums up to 1 and each row sums up to $k$. Let $M_*\in \mathbb{R}^{\Sym(k)\times \Sym(k)}$ consist of $k$ identical copies of $M/k$ stacked vertically. Then $M_*$ is doubly stochastic, hence by Lemma \ref{lemma:basic_entropy_estimates},
    $$H_k(\mu) = H(\mu^{(k)})\leq H(M_*\mu^{(k)}) = H_{k-1}(\mu) +\log k,$$
    the second equality following from the fact that $M_*\mu^{(k)}$ is just $k$ copies of $\mu^{(k-1)}/k$.
\end{proof}

Lemma \ref{lemma:entropy_quasi_mon} is also significant as it shows that $H_n(\mu)/n$ cannot produce too quick nontrivial oscillations.

\subsection{Permutons from automorphisms of the $d$-ary tree} \label{subsection:random_aut_notation}

In this subsection, we introduce some notation and define formally the permutons we study in Theorems \ref{thm:random_automorphism}-\ref{thm:random_automorphism2}. First, note that the nodes of the infinite rooted $d$-ary tree $T_d$ can be encoded by the set $\Omega=\{0, 1, \dots, d-1\}^{<\omega}$ of finite $d$-ary strings. Then a labelling $S=(\alpha_s)_{s\in\Omega}$ of the nodes by length $d$ permutations (i.e., $S\in \Sym(d)^{\Omega}$) encodes a sequence of permutations nested in the sense of substitution: we can recursively define a sequence of permutations with $$\pi_1=\alpha_{\emptyset}, \quad \pi_n= \pi_{n-1}[(\alpha_s)_{|s|=n}]\text{ for $n>1$},$$
where $(\sigma_s)_{|s|=n}$ is enumerated lexicographically. Geometrically, it just means that each of the base squares of the permuton $\mu_{n}=\mu_{n, S}$ corresponding to $\pi_n$ is replaced by the appropriately scaled base squares of the subsequent $\sigma_s$. Observe that each base square of any $\mu_n$ directly corresponds to a node of $T_d$. The sequence $(\mu_n)$ tends weakly to the permuton $\mu=\mu_S$ induced by the almost everywhere defined measure-preserving bijection $f$ whose graph is $\bigcap_{n=0}^{\infty}\mathrm{supp}(\mu_n)$, 
well-defined a.e.

We will need to vary the sidelengths to define the permuton arising in Theorem \ref{thm:random_automorphism2}. We will encode this by introducing edge weights $t_s\in [0, 1]$ for $s\in {\Omega\setminus\{\emptyset\}}$ conditioned on
$$\sum_{i\in \{0, 1, \dots, d-1\}}t_{s|_{n-1}\oplus i}=1,$$
i.e., the weights on edges incident to a node and pointing away from the root sum to 1. (For any $s\in\Omega$ and $k\leq |s|$, $s|_{k}\in\Omega$ denotes the substring formed by the first $k$ entries of $s$) The permuton $\mu=\mu_{S, t}$ corresponding to vertex labels $S$ and edge weights $t$ is defined as follows: its $n$th approximation $\mu_{n, S, t}$ is focused on subsquares of $[0, 1]^2$ in the same relative order as $\mu_{n, S}$ in the simple vertex-labelled setup, but the sidelength of the square corresponding to a vertex $v\in \Omega$ is defined by $T(s)=\prod_{\emptyset\neq s\subseteq v}t_s$, with the subset relation standing for $s$ being an initial slice of $v$. Again, it is simple to see that this uniquely determines a permuton, and $(\mu_{n, S, t})$ tends weakly to a permuton $\mu=\mu_{S, t}$, which is induced by the almost everywhere defined measure-preserving bijection once $T(s)\to 0$ as $|s|\to\infty$. The first construction corresponds to $t \equiv 1/d$.

We will consider two distinct random choices definable via this formalism. In the first setup, the only source of randomness is that the node labels are chosen i.i.d. according to a distribution $F$ on $\Sym(d)$, while $t\equiv 1/d$. In the second setup, the edge weights are also drawn at random, independently from the vertex labels the following way: for any $s\in \Omega$, we sample $U_1, \dots, U_{d-1}\sim \mathrm{Unif}(0, 1)$ independently, denote their order statistics by $U_{(1)}, \dots, U_{(n)}$ and let the random variables $\tau_{s\oplus i}=U_{(i)}-U_{(i-1)}$, where $U_{(0)}=0$ and $U_{(d)}=1$ define the edge weights. We denote the resulting random permutons in the two setups by $\mu_{F}$ and $\mu_{F, \mathrm{Unif}}$, a fiber of the latter with given edge weights is denoted by $\mu_{F, \tau=t}$. It is natural to think of $\mu_{F, \mathrm{Unif}}$ as a random perturbation of $\mu_{F}$.

\subsection{Entropy of convex combinations of pre-permutons}

We conclude the preliminaries by proving two technical lemmas, concerning how the entropy of a convex combination of pre-permutons is related to the individual entropies. As noted in the introduction, these provide strong motivation for our choices of the random permutons to study.

The first of these lemmas gives a general bound concerning the superlinear entropy. This will be used to great effect: most of the work needed to prove Theorem \ref{thm:abs_cont} will be done in the proof of this lemma. Note that these bounds can be made slightly sharper via a more careful analysis of the liminfs and the limsups, but it is not of our direct interest currently.

\begin{lemma}\label{lemma:entropy_of_convex_comb}
Let $\mu = \sum_{i=1}^m\beta_i\mu_i$ for $\beta_i\in[0, 1]$, $\sum_{i=1}^{m}\beta_i=1$, where the pre-permutons $(\mu_i)_{i=1}^m$ are pairwise singular. Then for arbitrary $l\in [m]$,
$$\beta_l \lowersupent(\mu_l)\leq \lowersupent(\mu)\leq \beta_l\lowersupent{\mu_l} + \sum_{i=1 ,i\neq l}^{m}\beta_i\uppersupent(\mu_i),$$
$$\beta_l \uppersupent(\mu_l)\leq \uppersupent(\mu)\leq \sum_{i=1}^{m}\beta_i\uppersupent(\mu_i).$$ 
\end{lemma}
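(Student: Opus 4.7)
The plan is to condition on which of the $\mu_i$ each sampled point comes from. Pairwise singularity provides disjoint Borel sets $S_1,\dots,S_m\subseteq[0,1]^2$ with $\mu_i(S_i)=1$, so almost surely each of the $n$ samples from $\mu$ lies in exactly one $S_i$; the count vector $N=(N_1,\dots,N_m)$ is multinomial $\mathrm{Mult}(n;\beta_1,\dots,\beta_m)$ and, conditional on $N$, the points labelled $i$ are $N_i$ i.i.d.\ samples from $\mu_i$. The key observation is that $\pi_n$ is encoded by the triple $(\ell,\ell',(\tau_i)_{i=1}^m)$, where $\ell,\ell'\in\{1,\dots,m\}^n$ record the source labels of the points ordered by $x$- and $y$-rank respectively, and $\tau_i\in\Sym(N_i)$ is the pattern of the $\mu_i$-samples among themselves. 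Conversely, $\pi_n$ together with $\ell,\ell'$ determines each $\tau_l$ deterministically.

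Using $H(N)=O(\log n)$, $H(\ell),H(\ell')\leq n\log m$, and $H(\tau_l)\geq H(\tau_l\mid N)=\EE[H_{N_l}(\mu_l)]$, subadditivity and standard chain-rule manipulations yield the two-sided bound
\begin{equation*}
\EE[H_{N_l}(\mu_l)]-O(n)\;\leq\; H_n(\mu) \;\leq\; O(n)+\EE\Bigl[\sum_{i=1}^m H_{N_i}(\mu_i)\Bigr].
\end{equation*}
Since $N_i\sim\Binom(n,\beta_i)$ satisfies $\EE|N_i-n\beta_i|=O(\sqrt n)$, and Lemma \ref{lemma:entropy_quasi_mon} gives $|H_{k+1}(\mu_i)-H_k(\mu_i)|\leq\log(k+1)$, iteration shows $\bigl|\EE[H_{N_i}(\mu_i)]-H_{\lfloor n\beta_i\rfloor}(\mu_i)\bigr|=O(\sqrt n\log n)=o(n\log n)$, so for asymptotic purposes the expectations may be replaced by the deterministic values $H_{\lfloor n\beta_i\rfloor}(\mu_i)$.

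It remains to extract the four inequalities. For $\uppersupent(\mu)\leq\sum_i\beta_i\uppersupent(\mu_i)$, fix $\epsilon>0$, use the eventual bound $H_k(\mu_i)\leq k\log k\,(\uppersupent(\mu_i)+\epsilon)$, and bound $\EE[N_i\log N_i]\leq n\beta_i\log n$ using $N_i\leq n$. For the sharper $\lowersupent(\mu)\leq\beta_l\lowersupent(\mu_l)+\sum_{i\neq l}\beta_i\uppersupent(\mu_i)$, pick a subsequence $(n_k)$ realizing $\lowersupent(\mu_l)$ and evaluate $H_{m_k}(\mu)$ at $m_k=\lceil n_k/\beta_l\rceil$; quasimonotonicity is indispensable here to transfer the low value of $H_{n_k}(\mu_l)$ across the $O(\sqrt{n_k})$ window in which $N_l$ concentrates around $n_k$. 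The lower bounds $\beta_l\lowersupent(\mu_l)\leq\lowersupent(\mu)$ and $\beta_l\uppersupent(\mu_l)\leq\uppersupent(\mu)$ come directly from the lower sandwich bound; for $\beta_l\in(0,1)$ the sequence $\lfloor n\beta_l\rfloor$ hits every sufficiently large integer, so the $\liminf$/$\limsup$ along it coincides with that of the full sequence.

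The main obstacle I anticipate is the rigorous verification that the triple $(\ell,\ell',(\tau_i))$ is a measurable function of the sample and inversely determines $\pi_n$, and that conditionally on $N$ the pattern $\tau_i$ is really governed by $\mu_i^{(N_i)}$: pairwise singularity, not merely atomlessness of marginals, is essential so that the source label of a sample is a deterministic function of its location. A secondary technical point is reconciling the subsequence realizing $\lowersupent(\mu_l)$ with the binomial averaging over $N_l$, where the quasimonotonicity provided by Lemma \ref{lemma:entropy_quasi_mon} is precisely the regularity that prevents the low value of $H_{n_k}(\mu_l)$ from being smeared out by the fluctuations of $N_l$.
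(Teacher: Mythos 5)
Your proposal is correct and follows essentially the same route as the paper: both condition on the two label sequences recording which source measure each sample came from in $x$-order and in $y$-order (your $\ell,\ell'$ are the paper's $\mathbf{I},\mathbf{J}$), pay at most $2n\log m$ in entropy for that information, observe that conditional on those labels the pattern is a deterministic function of the within-class subpatterns $\tau_i$ whose conditional law given $N_i$ is $\mu_i^{(N_i)}$, and then transfer $\EE[H_{N_i}(\mu_i)]$ to $H_{\lfloor n\beta_i\rfloor}(\mu_i)$ via binomial concentration plus the quasimonotonicity Lemma \ref{lemma:entropy_quasi_mon}. Your phrasing via chain rule and subadditivity is a cosmetically different packaging of the paper's use of Lemma \ref{lemma:basic_entropy_estimates}, and your direct $\EE|N_i-n\beta_i|=O(\sqrt n)$ bound replaces the paper's $(1\pm\varepsilon)\beta_i n$ window, but the substance is identical.
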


\begin{proof}
    Sample $n$ points from $\mu$. The idea of the proof is that if pairwise disjoint sets $(A_i)_{i=1}^m$ are chosen such that $\mu_i(A_j)=\delta_{i, j}$, then roughly $\beta_i n$ points fall into $A_i$, and the sampling entropy measured by $H_n(\mu)$ comes from 2 sources: the pattern entropies of the $\mu_i$s admitted on these numbers of points, and the entropy of the interlacement of these patterns. It will turn out that the second source is negligible after normalizing by $n\log n$, while the terms coming from the first source have contribution $\sum_{i=1}^m H_{\beta_i n}(\mu_i)$ asymptotically.

    Denote by $(x_1, y_1), ..., (x_n, y_n)$ the $n$ points sampled from $\mu$ such that $x_1< ... < x_n$, and let $\pi$ be pattern determined by these points. Denote by $I_j$ the set of indices for which $(x_i, y_i)\in A_j$. Then the number of possible choices for the sequences $(I_j)_{j=1}^m$, $(\pi(I_j))_{j=1}^m$ is bounded by $m^{2n}$. Denote by $E_{\mathbf{I}, \mathbf{J}} = E_{(I_1, ..., I_m), (J_1, ..., J_m)}$ the event that we see a given pair of sequences of index sets, by $\alpha_{\mathbf{I}, \mathbf{J}}$ its probability, and by $\mu^{(n), \mathbf{I}, \mathbf{J}}$ the corresponding conditional pattern distribution. Then 
    $$\mu^{(n)} = \sum_{\mathbf{I}, \mathbf{J}}\alpha_{\mathbf{I}, \mathbf{J}}\mu^{(n), \mathbf{I}, \mathbf{J}},$$
    and thus by Lemma \ref{lemma:basic_entropy_estimates} and the bound on the number of possible pairs $(\mathbf{I}, \mathbf{J})$

    \begin{equation} \label{eq:abs_cont_lem_entropy_bound}
    \begin{split}
    \sum_{\mathbf{I}, \mathbf{J}}\alpha_{\mathbf{I}, \mathbf{J}}H(\mu^{(n),\mathbf{I}, \mathbf{J}})\leq H(\mu^{(n)})=H\left(\sum_{\mathbf{I}, \mathbf{J}}\alpha_{\mathbf{I}, \mathbf{J}}\mu^{(n),\mathbf{I}, \mathbf{J}}\right)&\leq H((\alpha_{\mathbf{I}, \mathbf{J}}))+\sum_{\mathbf{I}, \mathbf{J}}\alpha_{\mathbf{I}, \mathbf{J}}H(\mu^{(n),\mathbf{I}, \mathbf{J}}) \\ &\leq 2n\log m + \sum_{\mathbf{I}, \mathbf{J}}\alpha_{\mathbf{I}, \mathbf{J}}H(\mu^{(n),\mathbf{I}, \mathbf{J}}).
    \end{split}
    \end{equation}
    Conditioned on any of the events $E_{\mathbf{I}, \mathbf{J}}$, the pattern $\pi$ is uniquely determined by the subpatterns in the $A_j$s. However, we have no independence, thus $H(\mu^{(n),I,J})$ is not simply given, but bounded from above by the sum of entropies of the subpatterns. Introducing the notation $|\mathbf{I}|=(|I_1|, ..., |I_m|)$, and only summing for a given size sequence of 
    $$\mathbf{k}=(k_1, ..., k_m),$$ 
    we may write
    \begin{equation}\label{eq:convexity_lemma_calc1}
    \sum_{|\mathbf{I}]=|\mathbf{J}|=\mathbf{k}}\alpha_{\mathbf{I}, \mathbf{J}}H(\mu^{(n),\mathbf{I}, \mathbf{J}}))\leq 
    \sum_{|\mathbf{I}]=|\mathbf{J}|=\mathbf{k}}\alpha_{\mathbf{I}, \mathbf{J}}\sum_{i=1}^m H(\mu_{A_i}^{(n),\mathbf{I}, \mathbf{J}}).
    \end{equation}
    where $\mu_{A_i}^{(n), \mathbf{I}, \mathbf{J}}\in \mathbb{P}(\Sym(k_i))$ is the distribution of patterns determined by points in $A_i$ conditioned on the event $E_{\mathbf{I}, \mathbf{J}}$. A lower bound can be obtained by using just one of the measures, for arbitrary $l\in [m]$ we have
    \begin{equation}\label{eq:convexity_lemma_calc2}
    \sum_{|\mathbf{I}]=|\mathbf{J}|=\mathbf{k}}\alpha_{\mathbf{I}, \mathbf{J}}H(\mu^{(n),\mathbf{I}, \mathbf{J}})\geq \sum_{|\mathbf{I}]=|\mathbf{J}|=\mathbf{k}}\alpha_{\mathbf{I}, \mathbf{J}} H(\mu_{A_l}^{(n),\mathbf{I}, \mathbf{J}}).
    \end{equation}
    By Lemma \ref{lemma:basic_entropy_estimates}, both estimates provided by \eqref{eq:convexity_lemma_calc1} and \eqref{eq:convexity_lemma_calc2} can be continued by taking the convex combination inside the entropy, in the case of the lower bound at price of the entropy of the normalized vector $\left(\alpha_{\mathbf{I}, \mathbf{J}}\right)_{|\mathbf{I}]=|\mathbf{J}|=\mathbf{k}}$, which is at most $2n \log m$. Moreover, with $k=k_l$, we clearly have
    $$\sum_{|\mathbf{I}]=|\mathbf{J}|=\mathbf{k}}\alpha_{\mathbf{I}, \mathbf{J}}\mu_{A_l}^{(n),\mathbf{I}, \mathbf{J}}=\mu_l^{(k)}\sum_{|\mathbf{I}]=|\mathbf{J}|=\mathbf{k}}\alpha_{\mathbf{I}, \mathbf{J}},$$
    as the left hand side is proportional to the total pattern distribution of $k$ points falling in $A_l$, which is $\mu_l^{(k)}$ by definition. 
    Plug these findings into \eqref{eq:abs_cont_lem_entropy_bound}, and first, focus on finishing the lower bound. To simplify notation, let $\alpha_{\mathbf{k}}^l = \sum_{|I_l|=k_l}\alpha_{\mathbf{I}, \mathbf{J}}$.
    Then we have
    \begin{equation} \label{eq:abs_cont_lem_entropy_bound_simp}
        H(\mu^{(n)})\geq -2n\log m + \sum_{k=0}^n \alpha_{\mathbf{k}}^lH(\mu_l^{(k)}).
    \end{equation}
    Here $\alpha_{\mathbf{k}}^l$ is the probability mass of the binomial distribution $\mathrm{Binom}(n, \beta_l)$. Thus by quasimonotonicity (Lemma \ref{lemma:entropy_quasi_mon}), for any $\varepsilon>0$, and large enough $n$, with $n_{-}=\left[(1-\varepsilon)\beta_l n\right]$, the final sum can be bounded from below to get
    $$H(\mu^{(n)}\geq-2n\log m + (1-\varepsilon)H(\mu_l^{(n_{-})})-2\varepsilon n\log n.$$
    Dividing by $n\log n$ and taking liminf in $n$,
    $$\lowersupent(\mu)\geq (1-\varepsilon)\liminf_n \frac{H(\mu_l^{(n_{-})})}{n_{-}\log n_{-}} \frac{n_{-}\log n_{-}}{n\log n}-2\varepsilon. $$
    This liminf is just $(1-\varepsilon)\beta_l\lowersupent{\mu_l}$, thus as this holds for any $\varepsilon>0$ and $l\in[m]$, we obtain the desired lower bound for the lower superlinear entropy. The proof of the lower bound on upper superlinear entropy goes verbatim. 

    Now consider the upper bounds, use the simplified notation
    $\alpha_{\mathbf{k}} = \sum_{|\mathbf{I}|=\mathbf{k}}\alpha_{\mathbf{I}, \mathbf{J}}$. 
    In this case, by the arguments above instead of \eqref{eq:abs_cont_lem_entropy_bound_simp}, we obtain the bound
    \begin{displaymath}
        H(\mu^{(n)})\leq 2n\log m + \sum_{\mathbf{k}} \alpha_{\mathbf{k}}\sum_{i=1}^m H(\mu_i^{(k_i)}).
    \end{displaymath}
    Here $\alpha_k$ is the probability distribution of the multinomial distribution $\mathrm{Multinom}(n, \mathbf{\beta})$. Moreover, any of the entropies on the right hand side can be bounded by $n\log n (1+o(1))$. Thus for any $\varepsilon>0$, and large enough $n$, we have
    $$\sum_{\mathbf{k}} \alpha_{\mathbf{k}}\sum_{i=1}^m H(\mu_i^{(k_i)}) =
    O(\varepsilon n\log n) + \sum_{\mathbf{k}\in n\mathbf{\beta}(1\pm\varepsilon)}\alpha_{\mathbf{k}}\sum_{i=1}^m H(\mu_i^{(k_i)}).$$
    Again by quasimonotonicity, for $n_{i+}\sim \beta_i(1+\varepsilon)n$,
    at the price of an error at most $2\varepsilon n\log n$ any index $k_i$ can be replaced by $n_{i+}$. If we replace all such indices, the total error is still $O(\varepsilon n\log n)$, thus we find
    \begin{equation} \label{eq:concise_entropy_convex_comb}
    \sum_{\mathbf{k}} \alpha_{\mathbf{k}}\sum_{i=1}^m H(\mu_i^{(k_i)}) = O(\varepsilon n\log n) + \sum_{i=1}^{m}H(\mu_i^{(n_{i+})}).
    \end{equation}
    This is hence an upper bound on $H(\mu^{(n)})$. If we divide by $n\log n$, the limsup of this sum is bounded from above by the sum of the limsups of the individual terms, and
    $$\limsup\frac{H(\mu_i^{(n_{i+})})}{n}=\beta_i(1+\varepsilon)\uppersupent(\mu_i),$$
    similarly to the calculation in the lower bound. As $\varepsilon\to 0$, this precisely gives the intended upper bound for $\uppersupent(\mu)$.
    
    The upper bound for $\lowersupent(\mu)$ follows from the same argument, repeatedly using the inequality $\liminf (a_n + b_n)\leq \liminf a_n + \limsup b_n.$
\end{proof}

Our second lemma is based on the observation that if we know a bit more about the geometric structure of the supports of the $\mu_i$s, a much more precise conclusion can be drawn about the relationship of the entropy of $\mu$ and the entropies of the $\mu_i$s. We say that the planar probability measures $\mu, \nu$ are \textit{geometrically separated} if both of their marginals are ordered, i.e., if we sample $(x_\mu, y_\mu)\sim \mu$ and $(x_\nu, y_\nu)\sim \nu$, then $x_\mu\leq x_\nu$ a.s. or $x_\mu\geq x_\nu$ a.s., and $y_\mu\leq y_\nu$ a.s. or $y_\mu\geq y_\nu$ a.s. The essence of the next lemma is that if the $\mu_i$s are pairwise geometrically separated, then the entropy sequence of $\mu$ is determined solely by the particular entropy sequences, up to negligible error, as no interlacement is possible.

\begin{lemma} \label{lemma:geom_sep}
    Let $\mu = \sum_{i=1}^m\beta_i\mu_i$ for $\beta_i\in[0, 1]$, $\sum \beta_i=1$, where the pre-permutons $(\mu_i)_{i=1}^m$ are pairwise geometrically separated. Then for any $n$,
    $$H(\mu^{(n)})= \omega_{n, m} + \sum_{i=1}^{m}\sum_{k=1}^{n} \binom{n}{k}\beta_i^{k}(1-\beta_i)^{n-k}H(\mu_i^{(k)}),$$
    with the error term satisfying $0\leq \omega_{n, m} \leq m \log(n+m)$
\end{lemma}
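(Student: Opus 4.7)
\textbf{Proof plan for Lemma \ref{lemma:geom_sep}.} My approach is to condition on the vector $\mathbf{N}=(N_1,\dots,N_m)$ recording how many of the $n$ i.i.d.\ sampled points fall in $\mathrm{supp}(\mu_i)$ for each $i$. Geometric separation forces the supports to be pairwise disjoint, so $\mathbf{N}\sim\mathrm{Multinom}(n,\beta)$ and in particular $N_i\sim\Binom(n,\beta_i)$ marginally. Conditional on $\mathbf{N}=(k_1,\dots,k_m)$, the $k_i$ points lying in $\mathrm{supp}(\mu_i)$ are i.i.d.\ copies of $\mu_i$ and independent across $i$; hence the induced block subpatterns $\pi_i\in\Sym(k_i)$ are conditionally independent with $\pi_i\sim\mu_i^{(k_i)}$.

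The key geometric input is that pairwise separation allows the full pattern $\pi$ to be reconstructed deterministically from $\mathbf{N}$ together with $(\pi_i)_{i=1}^m$. Since atomless marginals preclude ties almost surely, pairwise separation induces fixed total orderings $\sigma,\tau\in\Sym(m)$ of the supports by $x$- and $y$-coordinate respectively. Consequently the sample, sorted by $x$-coordinate, splits into $m$ consecutive blocks of sizes $N_{\sigma(1)},\dots,N_{\sigma(m)}$ whose relative heights are prescribed by a fixed outer permutation $\rho\in\Sym(m)$ determined by $\sigma$ and $\tau$; explicitly, $\pi=\rho[\pi_{\sigma(1)},\dots,\pi_{\sigma(m)}]$. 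In particular, no interleaving freedom is left, in sharp contrast to the situation of Lemma~\ref{lemma:entropy_of_convex_comb}.

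Having these two ingredients, the entropy chain rule yields
\begin{equation*}
H(\mu^{(n)})=H(\pi\mid\mathbf{N})+I(\pi;\mathbf{N}),
\end{equation*}
and conditional independence of the $\pi_i$ together with the binomial marginals of $\mathbf{N}$ give
\begin{equation*}
H(\pi\mid\mathbf{N})=\sum_{i=1}^m\EE\,H(\mu_i^{(N_i)})=\sum_{i=1}^m\sum_{k=1}^n\binom{n}{k}\beta_i^k(1-\beta_i)^{n-k}H(\mu_i^{(k)}),
\end{equation*}
which is exactly the sum in the statement (the $k=0$ term vanishes since $H(\mu_i^{(0)})=0$). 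Setting $\omega_{n,m}:=I(\pi;\mathbf{N})$, nonnegativity of mutual information yields $\omega_{n,m}\geq 0$, while $\omega_{n,m}\leq H(\mathbf{N})\leq\log\binom{n+m-1}{m-1}\leq m\log(n+m)$, since $\mathbf{N}$ ranges over the weak compositions of $n$ into $m$ parts. The only substantive step is the middle paragraph: once the deterministic reconstruction of $\pi$ is established, everything reduces to standard manipulations with the Shannon entropy chain rule; this reconstruction is precisely what was unavailable in the proof of Lemma \ref{lemma:entropy_of_convex_comb}, whose looseness came from having to absorb all possible interleavings of the blocks.
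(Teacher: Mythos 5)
Your proof is correct and takes essentially the same route as the paper's: condition on the multinomial vector of block sizes, use geometric separation to show that conditionally the pattern is a deterministic invertible function of the independent subpatterns so that $H(\pi\mid\mathbf{N}=\mathbf{k})=\sum_i H(\mu_i^{(k_i)})$, and then bound the discrepancy by the entropy of the multinomial. Packaging the error term as the mutual information $I(\pi;\mathbf{N})$ via the chain rule is a slightly cleaner phrasing of the paper's invocation of Lemma \ref{lemma:basic_entropy_estimates}, but the underlying argument and the final bound $0\le\omega_{n,m}\le H(\mathbf{N})\le\log\binom{n+m-1}{m-1}\le m\log(n+m)$ are the same.
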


\begin{megj}
    Observe that if we drop the error term, the sequences defined by $H(\mu^{(n)}), H(\mu_i^{(n)})=Cn$ satisfy the same identity. This leads to the idea discussed in the introduction that in case of a self-similar decomposition, when $H(\mu^{(n)})= H(\mu_i^{(n)})$, $\mu$ should have linear entropy.
\end{megj}

\begin{megj}
    This error term is very far from being exact if $n\ll m$, however, we will use it with $n\gg m$, in which case it is sufficiently good. We opted not to state this result in unnecessary generality at the expense of readability. In some applications, $m$ will be fixed along the argument, so we will simply write $O(\log n)$, but other applications will involve $n, m$ jointly growing, requiring this form in which the asymptotics in $m$ being tracked as well.

    This error term arises in the proof as a simple upper bound for the entropy of a multinomial distribution, for which no exact formulae are available, hence its precise handling is quite a challenge. For some recent results, see \cite{Kaji}.
\end{megj}

\begin{proof}[Proof of Lemma \ref{lemma:geom_sep}]
    Sample $n$ points from $\mu$, and choose $(A_i)_{i=1}^{m}$ as in the proof of Lemma \ref{lemma:entropy_of_convex_comb}, with the extra property that they witness geometric separatedness. Let $\pi$ be the random permutation determined by these points.
    
    Denote by $E_{k_1, ...k_m}$ the event that $k_i$ points fall into $A_i$, and let $\pi_i$ be the random permutation determined by these points. Conditioned on any of these events, the distribution of $\pi$ is just the independent coupling of $(\pi_1, ..., \pi_m)$ due to separatedness. Consequently, if $\mu^{(n, k_1, ..., k_m)}\in\mathbb{P}(\Sym(n))$ is the measure obtained by conditioning $\pi$ on $E_{k_1, ...k_m}$, then
    $$H(\mu^{(n, k_1, ..., k_m)})=\sum_{i=1}^{m}H(\mu_i^{(k_i)}).$$
    Since 
    $$\mu^{(n)}=\sum_{k_1+ \dots + k_m=n}\mathbb{P}(E_{k_1, ...k_m})\mu^{(n, k_1, ..., k_m)},$$
    by Lemma \ref{lemma:basic_entropy_estimates}
    $$\sum_{k_1+ \dots + k_m=n}\mathbb{P}(E_{k_1, ...k_m})\sum_{i=1}^{m}H(\mu_i^{(k_i)})\leq H(\mu^{(n)}) \leq H((k_1, ..., k_m)) + \sum_{k_1+ \dots + k_m=n}\mathbb{P}(E_{k_1, ...k_m})\sum_{i=1}^{m}H(\mu_i^{(k_i)}).$$
    Here $(k_1, ..., k_m)\sim \mathrm{Multinomial}(n, \beta_1, \dots, \beta_m)$. We bound it with the entropy of the uniform distribution over the same ground set, which has size equal to the number of $n$-length combinations of $m$ items with repetition. That is $\binom{n+m-1}{m-1}\leq(n+m)^m$, yielding
    $$H(\mu^{(n)}) = \omega_{n, m} + \sum_{k_1+ \dots + k_m=n}\mathbb{P}(E_{k_1, ...k_m})\sum_{i=1}^{m}H(\mu_i^{(k_i)}),$$
    with $0\leq \omega_{n, m} \leq m \log(n+m)$.
    Interchanging the sums concludes the proof.
\end{proof}

\section{Absolutely continuous permutons} \label{sec:abs_cont}

\begin{lemma} \label{lemma:entropy_of_pos_set}
    Let $A$ be a set of positive measure and $\lambda^A = \frac{\lambda|_A}{\lambda(A)}$ be the uniform probability distribution on it. Then $$\lim\frac{H_n(\lambda^A)}{n\log n}=1.$$
\end{lemma}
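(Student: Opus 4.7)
The plan is to extract the conclusion directly from the universal upper bound $H_n(\mu)\leq \log n!$, which holds for every pre-permuton $\mu$, combined with a self-referential convex decomposition of the Lebesgue measure on $[0,1]^2$ analyzed via Lemma~\ref{lemma:entropy_of_convex_comb}.

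First, I would record two basic facts. Since the sampling distribution $\mu^{(n)}$ is supported on $\Sym(n)$, whose cardinality is $n!$, the standard maximum-entropy bound gives $H_n(\mu)\leq \log n!$ for every pre-permuton $\mu$; in particular $\uppersupent(\mu)\leq 1$ always. On the other hand, for the full Lebesgue measure $\lambda$ on $[0,1]^2$, sampling $n$ independent points produces a uniform random element of $\Sym(n)$, so here we have equality $H_n(\lambda)=\log n!$, hence $\supent(\lambda)=1$.

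Writing $\beta=\lambda(A)\in(0,1]$, the case $\beta=1$ is immediate, so assume $\beta<1$ and consider the decomposition
$$\lambda=\beta\,\lambda^A+(1-\beta)\,\lambda^{[0,1]^2\setminus A}.$$
Both components are pre-permutons because their marginals are absolutely continuous with respect to the Lebesgue measure on $[0,1]$ and therefore atomless, and they are mutually singular, being supported on the disjoint sets $A$ and $[0,1]^2\setminus A$. Applying the upper bound of Lemma~\ref{lemma:entropy_of_convex_comb} with $l=1$ and using $\uppersupent(\lambda^{[0,1]^2\setminus A})\leq 1$ gives
$$1=\lowersupent(\lambda)\leq \beta\lowersupent(\lambda^A)+(1-\beta)\uppersupent(\lambda^{[0,1]^2\setminus A})\leq \beta\lowersupent(\lambda^A)+(1-\beta),$$
which rearranges to $\lowersupent(\lambda^A)\geq 1$. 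Combined with $\uppersupent(\lambda^A)\leq 1$, this forces $\supent(\lambda^A)=1$, as claimed.

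There is essentially no obstacle in this argument: the key observation is that one can decompose the Lebesgue measure itself along $A$ and its complement, so that the already-known maximality $\supent(\lambda)=1$ propagates back, via the asymmetric upper bound of Lemma~\ref{lemma:entropy_of_convex_comb}, to any component with positive weight. The only subtleties are checking that both pieces really are pre-permutons (automatic from absolute continuity of the marginals) and choosing the correct side of Lemma~\ref{lemma:entropy_of_convex_comb}.
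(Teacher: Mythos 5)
Your core argument --- decomposing $\lambda$ on $[0,1]^2$ as $\beta\lambda^A + (1-\beta)\lambda^{[0,1]^2\setminus A}$, invoking the upper bound of Lemma~\ref{lemma:entropy_of_convex_comb}, and closing the chain with the universal bound $\uppersupent\leq 1$ --- is exactly the paper's argument and is correct. However, it only covers the case $A\subseteq[0,1]^2$: you set $\beta=\lambda(A)\in(0,1]$ and use a convex decomposition of $\lambda$ into $\lambda^A$ and $\lambda^{[0,1]^2\setminus A}$, both of which presuppose $A$ lies in the unit square. The lemma as stated takes $A$ to be an arbitrary planar set of positive finite Lebesgue measure (recall the paper explicitly extends Theorem~\ref{thm:abs_cont} to pre-permutons, which live on $\mathbb{R}^2$), and you do not address how to pass to that generality.

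The paper supplies two short reductions you are missing: for $A$ bounded but not in $[0,1]^2$, rescale each coordinate monotonically so that $A\subseteq[0,1]^2$, which preserves pattern distributions and hence $H_n$; for $A$ unbounded, take bounded $A_0\subseteq A$, write $\lambda^A=\frac{\lambda(A_0)}{\lambda(A)}\lambda^{A_0}+\bigl(1-\frac{\lambda(A_0)}{\lambda(A)}\bigr)\lambda^{A\setminus A_0}$, apply the \emph{lower} bound of Lemma~\ref{lemma:entropy_of_convex_comb} to the first summand to get $\lowersupent(\lambda^A)\geq\frac{\lambda(A_0)}{\lambda(A)}\supent(\lambda^{A_0})$, then let $A_0\uparrow A$. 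With those two routine steps added, your proof coincides with the paper's.
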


\begin{proof}
    Assume first that $A$ is bounded, then without loss of generality we can assume that $A\subseteq [0, 1]^2$. Apply the upper bound of Lemma \ref{lemma:entropy_of_convex_comb} for $\lambda^A$ and $\lambda^{[0, 1]^2\setminus A}$ with coefficients $\lambda(A)$ and $1-\lambda(A)$. Then the combined measure is just $\lambda$, thus by the upper bound on $\lowersupent$ we find
    $$\supent(\lambda) = 1\leq \lambda(A)\lowersupent(\lambda^A) + (1-\lambda(A))\uppersupent(\lambda^{[0, 1]^2\setminus A})\leq 1,$$
    which immediately yields that the lower superlinear entropy (and hence the superlinear entropy) of $\lambda^A$ equals 1.

    In the general case, take bounded $A_0\subseteq A$. Then $\lambda^A$ is the convex combination of $\lambda^{A_0}$ and $\lambda^{A \setminus A_0}$ with coefficients $\lambda(A_0)/\lambda(A)$ and $1-\lambda(A_0)/\lambda(A)$. Thus by applying the lower bound of Lemma \ref{lemma:entropy_of_convex_comb} to the first constituting measure, we find
    $$\lowersupent(\lambda^A)\geq \frac{\lambda(A_0)}{\lambda(A)}\supent(\lambda^{A_0}).$$
    Taking $A_0\to A$ and using that we have already proven the statement for bounded sets, the proof is done.
\end{proof}

\begin{proof}[Proof of Theorem \ref{thm:abs_cont}]

    We note that Theorem \ref{thm:abs_cont} actually extends to pre-permutons.

    Let $\mu_{\mathrm{ac}}$ be the non-vanishing absolutely continuous part of $\mu$, and let $g=\frac{d\mu_{\mathrm{ac}}}{d\lambda}$. We can fix $A$ such that $1/K<g<K$ holds restricted to $A$ and $\mu_{\mathrm{ac}}(A), \lambda(A)>0$, while $\mu_{\bot}(A)=0$.

    Note that $\mu$ is trivially the convex combination of the normalized versions of $\mu_{\mathrm{ac}}^A$ and $\mu-\mu_{\mathrm{ac}}^A$ with coefficients $\mu(A)$ and $1-\mu(A)$, and Lemma \ref{lemma:entropy_of_convex_comb} can be applied. Thus we get the lower bound
    $$\lowersupent(\mu)\geq \mu(A)\lowersupent(\mu_{\mathrm{ac}}^A)$$

    By Lemma \ref{lemma:abs_cont_bound} and the bound on the Radon-Nikodym derivative $g$ on $A$, we find
    that actually
    $$\lowersupent(\mu)\geq \mu(A)\lowersupent(\lambda^A).$$
    As $\mu(A)$ can be arbitrarily close to $\|\mu_{\mathrm{ac}}\|$, this concludes the proof due to Lemma \ref{lemma:entropy_of_pos_set}.
    
\end{proof}

\section{Measure-preserving functions with regularity properties} \label{sec:kolmogorov_sinai}

Our goal is to prove Theorem \ref{thm:differentiable_maps}. We increase generality through a sequence of lemmas for better readability.

\begin{lemma} \label{lemma:diff_maps_piecewise_lin}
    The statement of Theorem \ref{thm:differentiable_maps} holds for $f$ with finitely many linear pieces.
\end{lemma}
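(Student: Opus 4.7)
The plan is to exploit the fact that for a piecewise linear measure-preserving $f$ with pieces $I_1, \ldots, I_m$ (in left-to-right order) on which $|f'| = c_i$, both the $x$- and $y$-structure of a sample from $\mu_f$ factor through a single multinomial count vector. Writing $J_i = f(I_i)$, measure preservation reads $\sum_{i : y \in J_i} 1/c_i = 1$ a.e. First, I partition $[0,1]$ in $y$ into finitely many intervals $B_1, \ldots, B_M$ using the endpoints of the $J_i$, chosen so that $S_j := \{i : B_j \subseteq J_i\}$ is constant on $B_j$. For $i \in S_j$ set $R_{ij} := I_i \cap f^{-1}(B_j)$, an interval of length $|B_j|/c_i$, and decompose $\mu_f$ into pieces $\mu_{ij}$ supported on the graph over $R_{ij}$ with weights $\alpha_{ij} = |R_{ij}|$.

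Sampling $n$ points from $\mu_f$ gives a count vector $(k_{ij}) \sim \mathrm{Multinom}(n, (\alpha_{ij}))$ whose entropy is $O(\log n)$, so Lemma \ref{lemma:basic_entropy_estimates} yields $H_n(\mu_f) = H(\text{pattern} \mid (k_{ij})) + O(\log n)$. The key structural claim is that conditional on the counts the pattern decomposes cleanly: sorting by $x$ first groups samples by $I_i$ (deterministic left-to-right), and within each $I_i$ further groups them by the sub-intervals $R_{ij}$, which appear in the order dictated by $j$ and the sign of $f'|_{I_i}$; within any single $R_{ij}$ the $y$-values are monotone in $x$. Thus the only residual randomness is, for each bin $B_j$ separately, the joint $y$-sort of the $k_j := \sum_i k_{ij}$ samples landing in that bin, encoded as a uniformly random word $W_j$ of length $k_j$ over $S_j$ with letter $i$ appearing $k_{ij}$ times. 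The $W_j$ are independent across $j$ (different bins use independent samples) and, together with $(k_{ij})$, recover the full pattern bijectively, so
\[
H(\text{pattern} \mid (k_{ij})) = \sum_j \log\binom{k_j}{(k_{ij})_{i\in S_j}}.
\]

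To finish, note $k_j \sim \mathrm{Binom}(n, |B_j|)$ and $(k_{ij})_{i \in S_j} \mid k_j \sim \mathrm{Multinom}(k_j, (1/c_i)_{i \in S_j})$, since a bin-$j$ sample sits on segment $i$ with probability $\alpha_{ij}/|B_j| = 1/c_i$. Stirling plus a standard multinomial concentration estimate give $\EE\!\left[\log\binom{k_j}{(k_{ij})_{i \in S_j}}\mid k_j\right] = k_j \sum_{i\in S_j} (\log c_i)/c_i + O(\log k_j)$; averaging over $k_j$ and then summing over $j$, the identity $\sum_j |B_j|\mathbf{1}(B_j \subseteq J_i) = |J_i| = c_i|I_i|$ collapses everything to $n \sum_i |I_i| \log c_i + O(\log n) = n \int_0^1 \log|f'| + O(\log n)$, which is the desired asymptotic.

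The main obstacle I anticipate is rigorously justifying the bijective decomposition conditional on $(k_{ij})$: one must carefully track how the $R_{ij}$ sit inside $I_i$ under the sign of $f'|_{I_i}$, verify that this really pins down the $x$-label sequence, and confirm that the within-bin shuffles $W_j$ carry all residual randomness while being mutually independent. The multinomial log-binomial asymptotic itself is routine once the structural claim is in place; Lemma \ref{lemma:entropy_quasi_mon} can absorb low-probability boundary effects where some $k_{ij} = 0$ before applying Stirling.
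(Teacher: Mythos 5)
Your proof is correct and takes essentially the same route as the paper's: your $B_j$ and $R_{ij}$ are the paper's range intervals $J_j$ and domain pieces $I_j^i$, your count vector $(k_{ij})$ is the paper's $\mathbf{m}$, and the key structural claim — that conditionally on the counts the pattern factors as independent uniform interleavings across the $y$-bins, with conditional entropy $\sum_j \log\binom{k_j}{(k_{ij})_{i\in S_j}}$ — is exactly the paper's observation that $\mu^{\mathbf{m}}$ is the independent coupling of uniform measures on $M_j^{\mathbf{m}}$ elements. The only minor divergence is in the final algebra: you pass directly through the conditional multinomial $(k_{ij})\mid k_j$ and use the measure-preservation identity $\sum_{i\in S_j}1/c_i=1$ to collapse everything to $\sum_i |I_i|\log c_i$, whereas the paper first expresses the limit as $\sum_j(|J_j|\log|J_j|-\sum_i|I_j^i|\log|I_j^i|)$ and substitutes $|I_j^i|=|J_j|/|f'|_{I_j^i}|$ at the end — a cosmetic reorganization that reaches the same $\int\log|f'|$.
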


\begin{proof}
    Partition the range $[0,1]$ into non-overlapping intervals $J_1, ..., J_l$ such that each $f^{-1}(J_j)$ consists of finitely many intervals on which $f$ is linear and mapped to $J_j$ bijectively by $f$. For fixed $j$, denote these intervals by $I_j^{1}, ..., I_j^{k_j}$. The intervals $((I_j^i)_{i=1}^{k_j})_{j=1}^l$ induce a partition of the domain $[0, 1]$, and by the \textit{pieces of $f$} we will mean $f$ over intervals of this partition. The number of pieces is $k=\sum_{j=1}^{l}k_j$, and each $k_j\leq \kappa$, being the number of monotonicity intervals of $f$. That is, $\kappa$ is independent of the partition $J_1, \dots, J_l$.

    Let $\mu=\mu_f$. First let us understand the pattern distribution of $n$-samples conditioned on that a given number $m_j^i$ of points fall on the piece of $f$ over $I_j^i$. Denote the vector of these numbers by $\mathbf{m}$ and the corresponding distribution by $\mu^{\mathbf{m}}$. The sample points falling into $f^{-1}(J_j)\times J_j$ determine a random pattern of length $\sum_{i=1}^{k_j} m_j^i$, denote its distribution by $\mu^{\mathbf{m}}_j$. Then it is easy to see that $\mu^{\mathbf{m}}$ is just the independent coupling of the $\mu^{\mathbf{m}}_j$s, yielding $H(\mu^{\mathbf{m}}) = \sum_{j=1}^{l}H(\mu^{\mathbf{m}}_j)$. The random pattern $\pi^{\mathbf{m}}_j\sim \mu^{\mathbf{m}}_j$ consists of $k_j$ monotone parts, each consisting of $m_j^i$ entries, the only source of randomness being the interlacement of these parts. Hence its distribution for some fixed $j$ is identical to the solution of the following classical problem: we drop a given number of points from uniform distribution, each of them deterministically labeled with $1, ..., k_j$, and then we proceed from 0 to 1 and note the sequence of labels we encounter, what is the distribution of the label sequence we noted? The answer is that we see the uniform distribution over the admissible sequences, as it is immediate for pairwise distinct labels, and identifying certain labels glues together a uniform number of outcomes. The number of admissible sequences in this particular case is $M_j^{\mathbf{m}} = \frac{(\sum_{i=1}^{k_j}m_j^i)!}{\prod_{i=1}^{k_j}m_j^i!}$, hence $\mu^{\mathbf{m}}_j$ is uniform over a ground set of this size. Taking the independent coupling for $j=1, ..., l$, we get that the pattern distribution conditioned on that $m_j^i$ points fall on the piece of $f$ over $I_j^i$ is uniform over $\prod_{j=1}^l M_j^{\mathbf{m}}$ different patterns, thus $H(\mu^{\mathbf{m}})=\sum_{j=1}^l\log M_j^{\mathbf{m}}$.

    Note that the true pattern distribution of the $n$-samples can be expressed as
    $$\mu^{(n)} = \sum_{\mathbf{m}}\mathbb{P}(\mathbf{m})\mu^{\mathbf{m}}.$$
    Thus by Lemma \ref{lemma:basic_entropy_estimates}, we find
    \begin{equation}\label{eq:piecewise_lin_convex_comb_ineq_invoked}
    \sum_{\mathbf{m}}\mathbb{P}(\mathbf{m})H(\mu^{\mathbf{m}})\leq H(\mu^{(n)})\leq H(\mathbf{m}) + \sum_{\mathbf{m}}\mathbb{P}(\mathbf{m})H(\mu^{\mathbf{m}})
    \end{equation}
    Note that the number of admissible sequences $\mathbf{m}$ is bounded by $n^k$, thus $H(\mathbf{m})\leq k\log n=o(n)$. Hence
    $$\linent(\mu)= \lim_n \frac{\sum_{\mathbf{m}}\mathbb{P}(\mathbf{m})H(\mu^{\mathbf{m}})}{n},$$
    if these limits exist, thus it would suffice to understand the asymptotics of 
    \begin{equation} \label{eq:decomp_piecewise_lin_ent}
    \sum_{\mathbf{m}}\mathbb{P}(\mathbf{m})H(\mu^{\mathbf{m}})=\sum_{\mathbf{m}}\mathbb{P}(\mathbf{m})\sum_{j=1}^l\log M_j^{\mathbf{m}}
    \end{equation}

    This can be vastly simplified as $\mathbf{m}$ is highly concentrated:

    \begin{claim}\label{claim:multinomial_concentration}
        If $\varepsilon>0$, then for $n$ large enough,
        $$\mathbb{P}(\forall i,j: m_j^i \in (n (|I_j^i|-\varepsilon/n^{1/3}), n (|I_j^i|+\varepsilon/n^{1/3}))>1-\varepsilon.$$
    \end{claim}

    \begin{proof}[Proof of Claim \ref{claim:multinomial_concentration}]
        It is sufficient to verify that
        \begin{equation} \label{eq:multinomial_conc}
        \sum_{i, j} \mathbb{P}(m_j^i \notin (n (|I_j^i|-\varepsilon/n^{1/3}), n (|I_j^i|+\varepsilon/n^{1/3}))<\varepsilon.
        \end{equation}
        Observe that $m_j^i\sim \mathrm{Binom}(n, |I_j^i|)$, thus for fixed $i, j$ this probability can be bounded by using Chernoff's bound:
        $$\mathbb{P}(m_j^i \notin (n (|I_j^i|-\varepsilon/n^{1/3}), n (|I_j^i|+\varepsilon/n^{1/3}))< 2 \exp\left(-\frac{-\varepsilon^2n^{1/3}}{3|I_j^i|}\right)<\varepsilon/k,$$
        where the second inequality holds for large enough $n$. As the sum in \eqref{eq:multinomial_conc} consists of $k$ terms, this proves the claim.
    \end{proof}

    Note that conditioned on the event proven to have probability exceeding $1-\varepsilon$ in Claim \ref{claim:multinomial_concentration}, 
    $$\sum_{i=1}^{k_j}m_j^i \in \left(n \sum_{i=1}^{k_j}(|I_j^i|-\varepsilon/n^{1/3}), n \sum_{i=1}^{k_j}(|I_j^i|+\varepsilon/n^{1/3})\right)=(n(|J_j|-k_j\varepsilon/n^{1/3}), n(|J_j|+k_j\varepsilon/n^{1/3})).$$
    
    For such $\mathbf{m}$, by Stirling's approximation we have
    $$\log M_j^{\mathbf{m}}=-n\left(-|J_j|\log |J_j| + \sum_{i}|I_j^i|\log |I_j^i|\right)+o(n),$$
    On the other hand, for any $\mathbf{m}$, recalling that $\kappa$ is the number of the monotonicity intervals of $f$, bounding each $k_j$ from above,
    $$\log M_j^{\mathbf{m}}\leq \sum_{i=1}^{k_j}m_j^i \log \kappa.$$
    Plugging these findings into \eqref{eq:decomp_piecewise_lin_ent}, we find
    \begin{equation} \label{eq:decomp_piecewise_lin_ent2}
    (1-\varepsilon)(-n)\sum_{j=1}^{l}\left(-|J_j|\log |J_j| + \sum_{i=1}^{k_j}|I_j^i|\log |I_j^i|\right)+\varepsilon O(n)= \sum_{\mathbf{m}}\mathbb{P}(\mathbf{m})\sum_{j=1}^l\log M_j^{\mathbf{m}}.
    \end{equation}
    As this holds for arbitrary $\varepsilon$ with large enough $n$, we find
    $$\lim_n \frac{\sum_{\mathbf{m}}\mathbb{P}(\mathbf{m})H(\mu^{\mathbf{m}})}{n} = \sum_{j=1}^{l}\left(|J_j|\log |J_j| - \sum_{i=1}^{k_j}|I_j^i|\log |I_j^i|\right)$$
    Finally, note that we can write $I_j^i= \frac{|J_j|}{\left|f'|_{I_j^i}\right|}$ to find that for fixed $j$, the summand is
    $$\sum_{i=1}^{k_j}|I_j^i| \log \left|f'|_{I_j^i}\right|=\int_{f^{-1}(J_j)}\log |f'|,$$
    which concludes the proof.
\end{proof}

\begin{lemma} \label{lemma:diff_maps_bounded_derivative}
    The statement of Theorem \ref{thm:differentiable_maps} holds for $f$ with a bounded derivative.
\end{lemma}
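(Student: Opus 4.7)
My plan is to mirror the proof of Lemma~\ref{lemma:diff_maps_piecewise_lin} step by step, altering only the analysis of the conditional layer entropy $H(\mu^{\mathbf{m}}_j)$, which is no longer that of a uniform distribution when $f$ is merely piecewise $C^1$ rather than piecewise linear. Fix $\varepsilon>0$ and partition $[0,1]$ into intervals $J_1,\dots,J_l$ of length at most $\varepsilon$, refined so that each $f^{-1}(J_j)$ decomposes into finitely many intervals $I_j^i$ with $f$ a $C^1$ monotonic bijection of $I_j^i$ onto $J_j$. As in the piecewise linear case, I condition on the count vector $\mathbf{m}=(m_j^i)$, apply Lemma~\ref{lemma:basic_entropy_estimates} together with the disjointness of the $J_j$, and reduce to analyzing $\sum_{\mathbf{m}}\mathbb{P}(\mathbf{m})\sum_j H(\mu^{\mathbf{m}}_j)$ up to an $O(\log n)$ multinomial-count-entropy error, with $\mathbf{m}$ again concentrating as in Claim~\ref{claim:multinomial_concentration}.

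The key observation is that although the pushforward density $\rho_j^i(y)=(|I_j^i|\,|f'(f_i^{-1}(y))|)^{-1}$ on $J_j$ is generally nonuniform, the measure-preservation identity $\sum_i |f'(f_i^{-1}(y))|^{-1}=1$ forces the mixture $\bar{\rho}_j=\sum_i (|I_j^i|/|J_j|)\rho_j^i$ to equal $1/|J_j|$ identically on $J_j$. Consequently the interlacement pattern of layer $j$ is distributed as the sequence of group-labels attached to $m_j=\sum_i m_j^i$ i.i.d.\ samples from uniform on $J_j$, each label drawn from the posterior $q_i(y)=|f'(f_i^{-1}(y))|^{-1}$. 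A standard asymptotic equipartition argument should then yield, as $m_j\to\infty$ with $m_j^i/m_j$ fixed,
$$H(\mu^{\mathbf{m}}_j)=m_j\mathcal{H}_j+o(m_j),\qquad \mathcal{H}_j=\int_{J_j}\frac{H(q(y))}{|J_j|}\,dy,$$
and the substitution $y=f(x)$ evaluates $|J_j|\mathcal{H}_j=\int_{f^{-1}(J_j)}\log|f'|$, so $\sum_j |J_j|\mathcal{H}_j=\int_0^1\log|f'|$. Averaging against $\mathbf{m}$ with $\mathbb{E}[m_j]=n|J_j|$ then produces $H(\mu^{(n)})/n\to\int_0^1\log|f'|$. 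When $f$ is piecewise linear the posteriors $q_i$ are constant in $y$ and this collapses to the multinomial-coefficient computation from Lemma~\ref{lemma:diff_maps_piecewise_lin}.

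The main obstacle is to justify the per-layer asymptotic $H(\mu^{\mathbf{m}}_j)=m_j\mathcal{H}_j+o(m_j)$ with enough uniformity in $\mathbf{m}$ to survive the averaging step. The lower bound should follow from the data-processing inequality applied to the coupling with unsorted labeled samples, giving $H(\mu^{\mathbf{m}}_j)\geq \log M_j^{\mathbf{m}}-\sum_i m_j^i D(\rho_j^i\|\bar{\rho}_j)$ with the KL term equal to the Jensen defect $\sum_i (|I_j^i|/|J_j|)\bigl(\log\overline{|f'|}_{I_j^i}-\overline{\log|f'|}_{I_j^i}\bigr)$, which tends to $0$ as $\varepsilon\to 0$ by uniform continuity of $f'$ on each monotonicity piece. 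The matching upper bound is more delicate; I would obtain it by comparing $\mu^{\mathbf{m}}_j$ to the interlacement distribution of a piecewise linear measure-preserving $\tilde f$ of slope $|J_j|/|I_j^i|$ on $I_j^i$, and transferring the asymptotic via Lemma~\ref{lemma:diff_maps_piecewise_lin} applied to a still finer refinement on which $|f'|$ is nearly constant. Regions where $|f'|$ is very small need separate treatment via restriction to $\{|f'|\geq\eta\}$ and the integrability of $\log|f'|$, guaranteed by the bounded-derivative assumption through Rokhlin's formula.
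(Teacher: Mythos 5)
Your proposal reproduces the paper's overall architecture (partition the range into $J_1,\dots,J_l$, refine so that $|f'|$ varies by a factor at most $1+\varepsilon$ on each monotonicity piece $I_j^i$, condition on the count vector $\mathbf{m}$, reduce to the per-layer entropies $H(\mu_j^{\mathbf m})$), but the per-layer analysis diverges in an interesting way. Your ``uniform sample with posterior labels'' reformulation --- using the measure-preservation identity $\sum_i |f'(f_i^{-1}(y))|^{-1}=1$ to observe that the mixture $\bar\rho_j$ is exactly uniform on $J_j$ --- is elegant, and the resulting closed form $|J_j|\mathcal{H}_j=\int_{f^{-1}(J_j)}\log|f'|$ is exact rather than a Riemann sum. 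Your lower bound $H(\mu_j^{\mathbf m})\ge\log M_j^{\mathbf m}-\sum_i m_j^i\,D(\rho_j^i\|\bar\rho_j)$, with the KL term equal to the Jensen defect $\log\overline{|f'|}_{I_j^i}-\overline{\log|f'|}_{I_j^i}$, is correct and is in substance the same bound the paper obtains by observing that the density $d\nu_i/d\lambda$ lies in $[(1+\varepsilon)^{-1},1+\varepsilon]$, which gives $H(\mu_j^{\mathbf m})\ge\log M_j^{\mathbf m}-m_j\log(1+\varepsilon)$ directly.

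Where you go astray is in the assessment of the upper bound. You set out to prove the \emph{exact} per-layer asymptotic $H(\mu_j^{\mathbf m})=m_j\mathcal{H}_j+o(m_j)$, call the upper half of this ``more delicate,'' and propose to prove it by comparison with a piecewise-linear proxy on a still finer refinement. But this sharper statement is not needed: the trivial bound $H(\mu_j^{\mathbf m})\le\log M_j^{\mathbf m}$ already suffices, because with your $\varepsilon$-refinement in place, $\log M_j^{\mathbf m}/m_j$ is a Riemann sum (via the mean value theorem, $|I_j^i|=|J_j|/|f'(\xi_j^i)|$) that converges to $\int\log|f'|$ as $\varepsilon\to0$, matching the lower bound. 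This is exactly what the paper does, and it renders the AEP machinery --- with its genuine difficulties about conditioning an exchangeable label sequence on its empirical type --- unnecessary. You have, in effect, identified a harder problem than the one you need to solve and then left it open; recognizing that the coarse upper bound closes the gap is the missing observation.

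Two smaller remarks. The ``regions where $|f'|$ is very small'' concern is vacuous here: measure preservation with finitely many monotone branches forces $\sum_i|f'(f_i^{-1}(y))|^{-1}=1$ with each summand in $(0,1]$, so $|f'|\ge1$ a.e.\ on every piece, and $\log|f'|\ge0$. There is no need to invoke Rokhlin's formula or integrability arguments, and in fact the paper uses Rokhlin only afterwards, to deduce Corollary \ref{cor:kolmogorov_sinai_entropy} from Theorem \ref{thm:differentiable_maps}, not inside this lemma.
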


\begin{proof}
    We can replicate mostly the proof of Lemma \ref{lemma:diff_maps_piecewise_lin}, with some additional error terms.
    
    For any $\varepsilon>0$ we can partition $[0,1]$ into non-overlapping $J_1, ..., J_{l(\varepsilon)}$ as in the proof of Lemma \ref{lemma:diff_maps_piecewise_lin} with the extra condition that for any $I_j^i$ we have that
    $$\frac{\max_{I_j^i} |f'|}{\min_{I_j^i} |f'|}< 1+\varepsilon.$$

    For any $\mathbf{m}$, the measure $\mu^{\mathbf{m}}$ is still the independent coupling of the $\mu_j^{\mathbf{m}}$s which are not uniform anymore over a ground set of $M_j^{\mathbf{m}}$ elements. We claim that it is almost uniform. Indeed, if $y_1^i, \dots, y_{m_j^i}^i$ are the $y$-coordinates of points sampled from $I_j^i$, then these already identify the pattern $\sigma$ determined by these points. Thus we can consider 
    $$T_\sigma = \{(y_s^i) \text{ s.t. they determine } \sigma\}\subseteq \mathbb{R}^{\sum_{i=1}^{k_j}m_i^j}.$$
    For fixed $i$, $y_s^i$ is sampled from a measure $\nu_i$ with density $g(y)=\frac{1}{|f'(f^{-1}(y))| |I_j^i|}$ over $J_j$. By our condition on the partition $J_1, \dots, J_{l(\varepsilon)}$, $\frac{1}{1+\varepsilon}\leq g\leq 1+\varepsilon$
    Thus we find for $\pi_j^{\mathbf{m}}\sim \mu_j^{\mathbf{m}}$
    $$\mathbb{P}(\pi_j^{\mathbf{m}} = \sigma)= \int_{T_\sigma}d\nu_1^{m_1^j}\dots d\nu_l^{m_l^j},$$
    and by change of variables,
    $$(1+\varepsilon)^{-\sum_{i=1}^{k_j}{m_i^j}} \int_{T_\sigma}d\lambda^{\sum_{i=1}^{k_j}{m_i^j}}\leq \int_{T_\sigma}d\nu_1^{m_1^j}\dots d\nu_l^{m_l^j} \leq (1+\varepsilon)^{\sum_{i=1}^{k_j}{m_i^j}} \int_{T_\sigma}d\lambda^{\sum_{i=1}^{k_j}{m_i^j}}.$$
    
    However, the Lebesgue measure of $T_\sigma$ comes from the piecewise affine case when the density $g_i(y)$ is constant, that is $\int_{T_\sigma}d\lambda^{\sum_{i=1}^{k_j}{m_i^j}} = 1/|\sigma|!$. Hence $\mu_j^{\mathbf{m}}$ is almost uniform indeed, putting mass between $(1+\varepsilon)^{-\sum_{i=1}^{k_j}{m_i^j}}/|\sigma|!$ and $(1+\varepsilon)^{\sum_{i=1}^{k_j}{m_i^j}}/|\sigma|!$ to each of the patterns. Thus
    $$H(\mu_j^\mathbf{m})\geq \log M_j^{\mathbf{m}} - \sum_{i=1}^{k_j}{m_i^j} \log(1+\varepsilon),$$
    and hence
    $$H(\mu^{\mathbf{m}})\geq \sum_{j=1}^{l}\log M_j^{\mathbf{m}} - n\log(1+\varepsilon) \geq \sum_{j=1}^{l}\log M_j^{\mathbf{m}}-n\varepsilon.$$
    Thus \eqref{eq:decomp_piecewise_lin_ent2} holds with an extra error term of $n\varepsilon$.

    Furthermore, by the mean value theorem we can find $\xi_{j^i}\in I_j^i$ such that $|I_j^i| = \frac{|J_j|}{|f'(\xi_j^i)|}$, hence
    $$\sum_{j=1}^{l}\left(-|J_j|\log |J_j| + \sum_{i=1}^{k_j}|I_j^i|\log |I_j^i|\right) = -\sum_{j=1}^{l}\sum_{i=1}^{k_j}|I_j^i|\log |f'(\xi_j^i)|.$$
    Thus we can continue \eqref{eq:decomp_piecewise_lin_ent2} as
    \begin{equation} \label{eq:decomp_piecewise_lin_ent3}
    (1-\varepsilon)n\sum_{j=1}^{l}\sum_{i=1}^{k_j}|I_j^i|\log |f'(\xi_j^i)|+\varepsilon O(n)= \sum_{\mathbf{m}}\mathbb{P}(\mathbf{m})\sum_{j=1}^l\log M_j^{\mathbf{m}}.
    \end{equation}
    Dividing by $n$ and taking $n\to\infty$, we see that both the lower and upper linear entropy of $\mu_f$ equals
    $$(1-\varepsilon)\sum_{j=1}^{l}\sum_{i=1}^{k_j}|I_j^i|\log |f'(\xi_j^i)|+\varepsilon O(1).$$
    However, as $\varepsilon\to 0$, this tends to $\int \log |f'|$, concluding the proof.
\end{proof}

\begin{proof}[Proof of Theorem \ref{thm:differentiable_maps}]

    We verify two inequalities by constructing sequences of functions for which the above lemmas can be applied and which approximate the entropy of $f$ from below and above.

    \begin{claim}
        $\upperlinent(\mu_f)\leq \int \log |f'|$.
    \end{claim}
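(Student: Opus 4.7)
The plan is to extend the conditional entropy decomposition from the proof of Lemma \ref{lemma:diff_maps_bounded_derivative} by using a layered refinement of the partition near points where $|f'|$ blows up. For any finite partition of the image $[0,1]$ into intervals $(J_\beta)$ with induced preimage pieces $(I_\alpha)$ on monotone intervals of $f$, sampling $n$ points from $\mu_f$ produces a count vector $\mathbf{m}=(m_\alpha)$, and the pattern $\pi$ conditional on $\mathbf{m}$ is supported on at most $\prod_\beta M_\beta^{\mathbf{m}}$ permutations, where $M_\beta^{\mathbf{m}}$ counts the valid interlacements of monotone subsequences mapping into $J_\beta$. Hence
\[
H_n(\mu_f) \leq H(\mathbf{m}) + \EE\!\left[\sum_\beta \log M_\beta^{\mathbf{m}}\right],
\]
and by Stirling's approximation combined with the multinomial concentration argument of Claim \ref{claim:multinomial_concentration}, the right-hand side equals $n\sum_\alpha |I_\alpha| \log\left(\mathrm{avg}_{I_\alpha}|f'|\right) + o(n)$, using that the secant slope $|J_\beta|/|I_\alpha|$ coincides with $\mathrm{avg}_{I_\alpha}|f'|$ by measure-preservingness of $f$.

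Next, given $\varepsilon > 0$ and large parameters $M, K$, I would choose the partition adaptively. On the good region $\{|f'| \leq M\}$, I refine so that $|f'|$ varies by a factor at most $1+\varepsilon$ on each piece $I_\alpha$, which is possible because $|f'|$ is continuous and bounded there. On the bad region $\{|f'| > M\}$, which accumulates near the endpoints of the monotone pieces of $f$, I use layered pieces $L_k = \{M 2^{k-1} < |f'| \leq M 2^k\}$ for $k = 1, \ldots, K$, together with a single residual piece covering $\{|f'| > M 2^K\}$. The bound $\int |f'| \leq \kappa$ (where $\kappa$ is the number of monotone pieces of $f$) gives $\lambda(L_k) \leq \kappa/(M 2^{k-1})$, so $\sum_k \lambda(L_k) = O(1/M)$.

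By Jensen's inequality (log concavity), $|I_\alpha| \log(\mathrm{avg}_{I_\alpha}|f'|) \geq \int_{I_\alpha} \log|f'|$, and the Jensen gap is at most $\varepsilon$ when summed over the good pieces, $O(1/M)$ when summed over the layered pieces (each contributing at most $\lambda(L_k)\log 2$), and $o_K(1)$ on the residual by absolute continuity of $\int \log|f'|$ combined with $\lambda(\text{residual}) \to 0$ as $K \to \infty$. Assuming $\int \log|f'| < \infty$ (otherwise the claim is vacuous), this yields
\[
\upperlinent(\mu_f) \leq \int \log|f'| + \varepsilon + O(1/M) + o_K(1),
\]
and sending $\varepsilon \to 0$ followed by $M, K \to \infty$ concludes the proof.

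The main obstacle I anticipate is handling pieces $I_\alpha$ whose expected sample count $n|I_\alpha|$ is too small for Stirling's formula to be accurate, particularly in the residual or deepest layers. I would address this by separately bounding the contribution of such small-mass pieces via the crude estimate $\log M_\beta^{\mathbf{m}} \leq n_\beta \log n_\beta$ on the enclosing image block and showing that the associated total mass is itself small enough that this bound contributes $o(n)$ after dividing by $n$. Careful bookkeeping of the error terms arising from the layer truncation, partition fineness, and Stirling's approximation will form the technical heart of the argument, but the overall strategy is a natural extension of the argument in Lemma \ref{lemma:diff_maps_bounded_derivative}.
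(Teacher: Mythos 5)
Your proof is correct in spirit and shares the paper's central observation: conditioning on the count vector $\mathbf{m}$, the pattern entropy is bounded above by $\sum_\beta\log M_\beta^{\mathbf{m}}$, which after Stirling and multinomial concentration gives $H_n(\mu_f)/n \leq \sum_\alpha |I_\alpha|\log\left(|J_\beta|/|I_\alpha|\right)+o(1)$. Where you differ from the paper is in the remaining step. The paper introduces the piecewise affine interpolant $f_N$ (equipartitioning each $J_j$), observes that $f_N$ achieves the uniform conditional distribution so that $H(\mu_f^{\mathbf{m}})\leq H(\mu_{f_N}^{\mathbf{m}})$, and then invokes Lemma \ref{lemma:diff_maps_piecewise_lin} to get $\upperlinent(\mu_f)\leq\int\log|f_N'|$; the convergence $\int\log|f_N'|\to\int\log|f'|$ is asserted with only a brief appeal to continuous differentiability. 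You instead compute directly with an adaptive partition (good region, dyadic layers, residual) and control the Jensen gap using $\int|f'|\leq\kappa$. These are the same quantity approached from two angles -- indeed $\int\log|f_N'|$ is exactly $\sum_\alpha|I_\alpha|\log(\mathrm{avg}_{I_\alpha}|f'|)$ -- and your argument effectively supplies the justification for the paper's final convergence claim. The paper's route is structurally more economical (no re-derivation of the Stirling/concentration bound), yours is more explicit about the unbounded-derivative case.

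Two technical points you should shore up. First, your layers $L_k=\{M2^{k-1}<|f'|\leq M2^k\}$ must be compatible with a finite partition of the \emph{range}: each $J_\beta$ has up to $\kappa$ preimage pieces, and you need each such piece to lie in a single layer. This forces the range partition to include the images of all layer boundary points. That works only if each $L_k$ (intersected with a monotone piece) is a finite union of intervals; $C^1$ alone does not guarantee $|f'|$ is piecewise monotone near a singularity. An alternative that avoids the issue: take any sufficiently fine range partition, classify a preimage piece $I_\alpha$ as bad when $\max_{I_\alpha}|f'|/\min_{I_\alpha}|f'|>1+\varepsilon$, and show the bad set has small measure; the bad contribution is then controlled either by the crude bound $\log M_\beta^{\mathbf{m}}\leq n_\beta\log\kappa$ or by the Jensen estimate $\sum_{\alpha\ \text{bad}}|I_\alpha|\log(\mathrm{avg}_{I_\alpha}|f'|)\leq\lambda(\text{bad})\log(\kappa/\lambda(\text{bad}))$. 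Second, your crude estimate $\log M_\beta^{\mathbf{m}}\leq n_\beta\log n_\beta$ is weaker than necessary; $M_\beta^{\mathbf{m}}\leq k_\beta^{n_\beta}\leq\kappa^{n_\beta}$, so $\log M_\beta^{\mathbf{m}}\leq n_\beta\log\kappa$, which makes the $o(n)$ accounting for small-mass pieces cleaner.
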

    
    \begin{proof}
    Partition $[0,1]$ into non-overlapping $J_1, \dots, J_{l}$ as in the proof of Lemma \ref{lemma:diff_maps_piecewise_lin}. We will define the piecewise affine approximation $f_N$ of $f$ as follows. First, we equipartition each of the intervals $J_j$ into $N$ subintervals, by abusing the notation we denote the resulting partition by $J_1, \dots, J_l$. Then again following the notation of Lemma \ref{lemma:diff_maps_piecewise_lin}, in each $I_j^i$ we define $f_N$ to be affine and have the same values in the endpoints as $f$.
    Notice that 
    with the notation of the proof of Lemma \ref{lemma:diff_maps_piecewise_lin},
    \eqref{eq:piecewise_lin_convex_comb_ineq_invoked} can be written both for $g=f, f_N$ to see after taking limsup
    $$\upperlinent(\mu_g)= \limsup_n \frac{\sum_{\mathbf{m}}\mathbb{P}(\mathbf{m})H(\mu_g^{\mathbf{m}})}{n},$$
    where $\mathbb{P}(\mathbf{m})$ is independent of the choice of $g$. Notice that for $g=f_N$, the entropy $H(\mu_g^{\mathbf{m}})=\sum_{j=1}^{l} \log M_j^{\mathbf{m}}$, as in the proof of Lemma \ref{lemma:diff_maps_piecewise_lin}, while for $g=f$ it is necessarily smaller. Given this observation, and applying Lemma \ref{lemma:diff_maps_piecewise_lin}, we get
    $$\upperlinent(\mu_f)\leq \upperlinent(\mu_{f_N})=\int \log |f_N'|$$
    for any $N$, with the right hand side tending to $\int \log |f'|$ as $N\to\infty$, due to $f$ being continuously differentiable, proving the claim.
    \end{proof}

    \begin{claim}
        $\lowerlinent(\mu_f)\geq \int \log |f'|$.
    \end{claim}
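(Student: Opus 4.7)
My plan is to exhaust $f$ from within by restrictions on which $f'$ is bounded, apply Lemma \ref{lemma:diff_maps_bounded_derivative} to each such restriction, and combine via a sharpened lower-bound analog of Lemma \ref{lemma:entropy_of_convex_comb}.

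First I would fix $N\gg 1$ and let $A_N\subseteq[0,1]$ be the finite union of closed subintervals on which $|f'|\leq N$; since $f$ is piecewise $C^1$ with finitely many pieces, $a_N:=\lambda(A_N)\to 1$ as $N\to\infty$. I would decompose $\mu=\mu_f = a_N\nu_1 + (1-a_N)\nu_2$, with $\nu_1 := \mu_f|_{A_N\times[0,1]}/a_N$ and $\nu_2 := \mu_f|_{A_N^c\times[0,1]}/(1-a_N)$ pairwise singular pre-permutons.

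Next I would revisit the lower-bound half of the proof of Lemma \ref{lemma:entropy_of_convex_comb} in this two-piece setting, tracking the error term $H(\mathbf{I},\mathbf{J})$ more carefully than there. Since $\mathbf{I}$ (the indices of the sample lying in $A_N$ after sorting by $x$) is a deterministic function of $n$ i.i.d.\ Bernoulli$(a_N)$ indicators, $H(\mathbf{I})\leq n\,h(a_N)$ with $h(\alpha):=-\alpha\log\alpha-(1-\alpha)\log(1-\alpha)$; and given $\mathbf{I}$, $\mathbf{J}$ takes one of at most $\binom{n}{|\mathbf{I}|}$ values, so $H(\mathbf{J}\mid\mathbf{I})\leq\EE\!\left[\log\binom{n}{|\mathbf{I}|}\right] = n\,h(a_N)+o(n)$ by Stirling and Chernoff concentration of $|\mathbf{I}|\sim\Binom(n,a_N)$. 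This would yield
$$\lowerlinent(\mu) \geq a_N\,\lowerlinent(\nu_1) - 2h(a_N),$$
where crucially $h(a_N)\to 0$ as $N\to\infty$.

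To estimate $\lowerlinent(\nu_1)$, I would convert $\nu_1$ into a permuton by pushing it forward by the CDFs $\phi,\psi$ of its $x$- and $y$-marginals. Both $\phi$ and $\psi$ are monotonic and bijective (mod null sets) onto $[0,1]$, so the sampling pattern distribution is preserved and $H_n(\nu_1)=H_n(\mu_{\hat f})$ for $\hat f:=\psi\circ f\circ\phi^{-1}$. A direct check using the measure-preservation identity $\sum_{f(x)=y}1/|f'(x)|=1$ shows that $\hat f$ is measure preserving $[0,1]\to[0,1]$, and it is piecewise $C^1$ with finitely many pieces and $|\hat f'|$ bounded, so Lemma \ref{lemma:diff_maps_bounded_derivative} gives $\linent(\mu_{\hat f})=\int_0^1\log|\hat f'|$. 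Using $\psi'(y)=(1/a_N)(1-\xi_N(y))$ with $\xi_N(y):=\sum_{f(x)=y,\, x\notin A_N}1/|f'(x)|\leq K/N$ (where $K$ bounds the number of preimages of $f$), a change of variables yields $a_N\int_0^1\log|\hat f'| = \int_{A_N}\log|f'| + O(1/N)$. Hence
$$\lowerlinent(\mu_f) \geq \int_{A_N}\log|f'| - O(1/N) - 2h(a_N),$$
and letting $N\to\infty$ the error terms vanish while $\int_{A_N}\log|f'|\to\int_0^1\log|f'|$ (positive part by monotone convergence; negative part stabilizes once $\{|f'|<1\}\subseteq A_N$, which holds for $N\geq 1$). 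The hardest step will be the sharpened error analysis: the crude $O(n)$ bound from Lemma \ref{lemma:entropy_of_convex_comb} would leave a constant additive gap, so it is essential to exploit that the $\nu_2$-mass $1-a_N$ is small to reduce the gap to $2h(a_N)\to 0$.
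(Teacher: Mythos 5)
Your proposal is correct in its essentials, but it takes a genuinely different route from the paper's. The paper never decomposes the measure or rescales: it keeps the block decomposition of Lemma~\ref{lemma:diff_maps_piecewise_lin} (conditioning on the multinomial vector $\mathbf m$ of counts in the intervals $I_j^i$), which makes the interlacement error only $O(\log n)$ rather than $O(n)$, and then directly \emph{modifies} $f$ on the bad blocks: it isolates a subfamily $\mathcal J$ of range intervals $J_j$ of total length $<\varepsilon$ on which $|f'|$ blows up, and replaces $f$ there by a measure-preserving $f_\varepsilon$ with $|f_\varepsilon'|\equiv 1$ that is increasing on $f^{-1}(J_j)$. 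Then $H\bigl((\mu_{f_\varepsilon})_j^{\mathbf m}\bigr)=0\le H\bigl((\mu_f)_j^{\mathbf m}\bigr)$ block-by-block, so $\lowerlinent(\mu_f)\ge\lowerlinent(\mu_{f_\varepsilon})=\int\log|f_\varepsilon'|$ by Lemma~\ref{lemma:diff_maps_bounded_derivative}, and the right-hand side tends to $\int\log|f'|$ since $\log|f'|\ge 0$ a.e.\ (which follows from $\sum_{f(x)=y}1/|f'(x)|=1$, the same identity you invoke). Your route instead decomposes $\mu_f$ as $a_N\nu_1+(1-a_N)\nu_2$, sharpens the $2n\log m$ error in Lemma~\ref{lemma:entropy_of_convex_comb} to roughly $n\,h(a_N)+o(n)$ (correct: $H(\mathbf I)$ is in fact only $O(\log n)$ since $A_N$ is a finite union of intervals, so $\mathbf I$ has at most $O(1)$ runs, and $H(\mathbf J\mid\mathbf I)\le\EE\log\binom{n}{|I_1|}\le n\,h(a_N)+o(n)$), and rescales $\nu_1$ to a genuine permuton via the marginal CDFs. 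Both approaches end by invoking Lemma~\ref{lemma:diff_maps_bounded_derivative} and letting the bad-set measure go to $0$.

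What each buys: the paper's comparison $H((\mu_{f_\varepsilon})_j^{\mathbf m})\le H((\mu_f)_j^{\mathbf m})$ is shorter and needs no sharpening of Lemma~\ref{lemma:entropy_of_convex_comb}, but it requires reopening the internal decomposition (equation~\eqref{eq:piecewise_lin_convex_comb_ineq_invoked}) of the previous lemma and constructing the auxiliary $f_\varepsilon$. Your version treats Lemma~\ref{lemma:diff_maps_bounded_derivative} as a black box and illustrates a reusable trick (CDF reparametrization of pre-permutons preserves the sampling law), at the cost of the extra bookkeeping. Two details you should flag and fill: (i) to pass from $\liminf\frac1n\sum_{\mathbf k}\alpha^1_{\mathbf k}H(\nu_1^{(k_1)})$ to $a_N\lowerlinent(\nu_1)$ you should drop the small-$k$ tail and use nonnegativity of entropy, since quasimonotonicity alone would give a fatal $\varepsilon n\log n$ error after division by $n$ (the paper only needed this argument in the $n\log n$ regime); it does go through, but say so explicitly. (ii) The ``negative part'' discussion is superfluous, since the identity $\sum_{f(x)=y}1/|f'(x)|=1$ forces $|f'|\ge 1$ a.e., making $\log|f'|\ge 0$; you already use this identity to control $\psi'$, so you should just invoke it once more. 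Also make sure $A_N$ is chosen as a finite union of closed intervals slightly inside each piece (rather than the literal sublevel set of $|f'|$), which is easy since $|f'|$ only diverges at endpoints of pieces.
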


    \begin{proof}
    Note that $|f'|$ can only diverge to infinity at the boundary of the continuously differentiable pieces. Thus for any $\varepsilon>0$, we can  partition $[0,1]$ into non-overlapping $J_1, \dots, J_{l(\varepsilon)}$ as in the proof of Lemma \ref{lemma:diff_maps_piecewise_lin} with the extra property that there exists $\mathcal{J}\subseteq [l(\varepsilon)]$ with
    $$\sum_{j\in \mathcal{J}}|J_j|<\varepsilon,$$
    and $|f'|$ being bounded in $I_j^i$ for $j\notin \mathcal{J}$. Now define $f_{\varepsilon}$ by only modifying $f$ in the intervals $I_j^i$ for $j\in \mathcal{J}$: in these intervals, let $f_{\varepsilon}$ have derivative 1 and place the pieces so that $f_{\varepsilon}$ is still measure-preserving, and is increasing restricted to $f^{-1}(J_j)$.

    As before, \eqref{eq:piecewise_lin_convex_comb_ineq_invoked} can be written both for $g=f, f_{\varepsilon}$ to see after taking liminf
    $$\lowerlinent(\mu_g)= \liminf_n \frac{\sum_{\mathbf{m}}\mathbb{P}(\mathbf{m})H(\mu_g^{\mathbf{m}})}{n},$$
    where $\mathbb{P}(\mathbf{m})$ is independent of the choice of $g$. Here $H(\mu_g^{\mathbf{m}})=\sum_{j=1}^{l} H((\mu_g)_j^{\mathbf{m}})$. These entropies are independent of the choice of $g$ for $j\notin \mathcal{J}$, as $f=f_{\varepsilon}$ in these regions. On the other hand, for $j\in \mathcal{J}$, $$H((\mu_{f_{\varepsilon}})_j^{\mathbf{m}})=0\leq H((\mu_f)_j^{\mathbf{m}}),$$ as only the identical pattern occurs for such $j$ and $f_{\varepsilon}$. By this observation and by Lemma \ref{lemma:diff_maps_bounded_derivative}, we have
    $$\lowerlinent(\mu_f)\geq \lowerlinent(\mu_{f_\varepsilon})=\int \log |f_{\varepsilon}'|$$
    for any $\varepsilon>0$, with the right hand side tending to $\int \log |f'|$, as $\varepsilon\to 0$, proving the claim.
    \end{proof}
    Combining the two claims concludes the proof.
    
\end{proof}

\section{Entropy of generic functions} \label{sec:generic}

\begin{lemma} \label{lemma:supremum_conv_yields_permuton_conv}
    If $f_n\to f$ in the supremum norm, then $\mu_{f_n}\to \mu_f$.
\end{lemma}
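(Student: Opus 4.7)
The plan is to invoke Proposition~\ref{prop:equiv_topologies} and verify the $d_\square$-convergence, i.e.\ that $\mu_{f_n}(R) \to \mu_f(R)$ uniformly over axis-aligned rectangles $R = [x_1, x_2] \times [y_1, y_2] \subseteq [0,1]^2$. By the defining formula, $\mu_g(R) = \lambda([x_1, x_2] \cap g^{-1}([y_1, y_2]))$, so the task reduces to controlling how the preimage of $[y_1, y_2]$ under $f_n$ deviates from its preimage under $f$ when restricted to the slab $[x_1, x_2]$.

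The key input will be the following sandwich: setting $\varepsilon_n := \|f_n - f\|_\infty$, any $x$ with $f(x) \in [y_1 + \varepsilon_n, y_2 - \varepsilon_n]$ automatically satisfies $f_n(x) \in [y_1, y_2]$, and conversely any $x$ with $f_n(x) \in [y_1, y_2]$ satisfies $f(x) \in [y_1 - \varepsilon_n, y_2 + \varepsilon_n]$. This yields, after clipping the interval bounds to $[0,1]$ (the shrunk interval being treated as empty if degenerate),
\begin{equation*}
\mu_f\bigl([x_1, x_2] \times [y_1 + \varepsilon_n, y_2 - \varepsilon_n]\bigr) \leq \mu_{f_n}(R) \leq \mu_f\bigl([x_1, x_2] \times [y_1 - \varepsilon_n, y_2 + \varepsilon_n]\bigr).
\end{equation*}

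From here the conclusion will drop out by exploiting that $f$ is measure-preserving: for any interval $J \subseteq [0,1]$, $\lambda(f^{-1}(J)) = |J|$, so the outer and inner rectangles above differ from $R$ by two horizontal strips, each of $\mu_f$-measure at most $2\varepsilon_n$. Hence $|\mu_{f_n}(R) - \mu_f(R)| \leq 4\varepsilon_n$ uniformly in $R$, and combining $\varepsilon_n \to 0$ with Proposition~\ref{prop:equiv_topologies} delivers $\mu_{f_n} \to \mu_f$ weakly. There is no real obstacle here; the entire content is that the measure-preserving property of $f$ converts an $\varepsilon$-perturbation in the supremum norm into an $O(\varepsilon)$ perturbation in the rectangle metric.
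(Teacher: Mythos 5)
Your proof is correct and follows essentially the same route as the paper's: both verify $d_\square$-convergence via Proposition~\ref{prop:equiv_topologies}, sandwich $\mu_{f_n}(R)$ between the $\mu_f$-measures of a vertically shrunk and a vertically enlarged rectangle (using $\|f_n - f\|_\infty$ to control the preimages), and then use the uniform $y$-marginal of $\mu_f$ (i.e.\ the measure-preserving property) to show the difference is $O(\varepsilon_n)$ uniformly in $R$. The paper phrases the strip-measure step as a general inequality valid for any $g\in C(\lambda)$ and then invokes symmetry in $f$ and $f_n$ rather than writing both directions out, but the mathematical content is the same. (Your final constant of $4\varepsilon_n$ is slightly loose -- each strip has $\mu_f$-measure at most $\varepsilon_n$, giving $2\varepsilon_n$ -- but that has no bearing on the conclusion.)
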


Note that the reverse is not true: if $f$ is the identity and $f_n$ is the skew tent map which takes the value 1 in $1-1/n$, then $f_n\nrightarrow f$, but it is straightforward to check that $\mu_{f_n}\to \mu_f$. Indeed, for fixed $n$ with probability tending to 1 all sample points fall to the first linear piece.

\begin{proof}[Proof of Lemma \ref{lemma:supremum_conv_yields_permuton_conv}]
    We use the third equivalent description given in Proposition \ref{prop:equiv_topologies} to metrize the topology of permutons. Fix an arbitrary rectangle $R=[x_1, x_2]\times [y_1, y_2]$, and for $\varepsilon>0$ let $R_\varepsilon = [x_1, x_2]\times [y_1+\varepsilon, y_2-\varepsilon]$. Then due to having uniform marginals, for any $g\in C(\lambda)$ we have
    $$\mu_g(R_\varepsilon)\leq \mu_g(R)\leq \mu_g(R_\varepsilon)+2\varepsilon.$$
    For large enough $n$, we have $\|f_n - f\|_{\infty}< \varepsilon$, and then
    $$\mu_f(R_\varepsilon)\leq \mu_{f_n}(R),$$
    which together with the previous observation yields
    $$\mu_f(R)-2\varepsilon \leq \mu_{f_n}(R).$$
    The role of $f_n, f$ is symmetric here, thus we have the same inequality with the role of them switched, hence for large enough $n$
    $$d_{\square}(\mu_f, \mu_{f_n})\leq 2\varepsilon.$$
    This yields the desired conclusion.
\end{proof}

\begin{proof}[Proof of Theorem \ref{thm:generic_lack_of_entropy}] 

Let $X_{N,\varepsilon}\subseteq C(\lambda)$ (resp. $Y_{N, \varepsilon}\subseteq C(\lambda)$) be the set of functions for which there exists $n>N$ with $H_n< \varepsilon n $ (resp. $H_n>(1-\varepsilon) n\log n$). Due to Lemma \ref{lemma:supremum_conv_yields_permuton_conv} and the continuity of $H_n$, both $X_{N,\varepsilon}, Y_{N, \varepsilon}$ are open sets for any choice of parameters. Proving their density would conclude the proof.

The density of $Y_{N, \varepsilon}$ is quite simple. For a given partition to affine pieces of a piecewise affine function $f$, we define the $k$-oscillating approximator $f_k$ by replacing each of its affine pieces with $2k+1$ affine pieces of equal length, each having the same range as the original affine piece, and having the same values in the endpoints. For given $\delta>0$, by using a sufficiently fine partition, we can guarantee that any $k$-oscillating approximator is $\delta$-close to $f$. Thus due to the density of piecewise affine functions in $C(\lambda)$ \cite[Proposition~8]{Bobok_2020}, it suffices to prove that for any piecewise affine function $f$, any valid partition and large enough $k$ we have $f_k\in Y_{N, \varepsilon}$. However, by taking $k\to\infty$ it is simple to see that $\mu_{f_k}$ weakly converges to $\lambda^A$ where $A$ is the union of some rectangles. However, for large enough $n$ we have $H_n(\lambda^A)>(1-\varepsilon)n\log n$ by Theorem \ref{thm:abs_cont}. By continuity, we have the same lower bound for $H_n(\mu_{f_k})$ for large enough $k$. It verifies the density of $Y_{N, \varepsilon}$.

Proving the density of $X_{N,\varepsilon}\subseteq C(\lambda)$ is a bit more technical. Consider the dense subspace of continuous piecewise affine measure-preserving maps, $\mathrm{CPA}(\lambda)\subseteq C(\lambda)$. Note that for a piecewise affine function $f$, due to Theorem \ref{thm:differentiable_maps}, $\mu_f$ has superlinear entropy 0, thus we immediately obtain that $\lowersupent \mu_f = 0$ in a dense set of functions, and a weaker form of Theorem \ref{thm:generic_lack_of_entropy} about the genericity of $\lowersupent \mu_f = 0$ readily follows. To prove the same about $\lowerlinent$, we have to construct a dense set of functions with low linear entropy in $\mathrm{CPA}(\lambda)$.

The idea is the following: for $f\in \mathrm{CPA}(\lambda)$, we can find a typically discontinuous piecewise affine measure-preserving function $g\in\mathrm{PA}(\lambda)$, which is very near $f$ and $|g'|=1$ wherever $g$ is continuous. By Theorem \ref{thm:differentiable_maps}, this has linear entropy 0. Let us replace each piece of $g$ by shrinking its domain a bit but leaving its range intact, that is increase the derivative slightly. This creates gaps in the domain of $g$, fill these in a piecewise affine manner so that the resulting function $h\in\mathrm{CPA}(\lambda)$. It is intuitively clear that by going to 0 with the size of the gaps, for fixed $n$ the pattern densities of $\mu_h$ will tend to the ones of $\mu_g$, as sampling $n$ points barely ever hit the graph of the function above a gap, thus $h$ can witness that $X_{N, \varepsilon}$ is dense if we choose the parameters during this argument suitably.

    Let us check that the steps explained above can be carried out. A visualization of the construction can be seen in Figure \ref{fig:low_entropy_construction}. First, we need that $g$ with the prescribed properties can be found in $B(f, \delta)$. To this end, partition the range $[0,1]$ into non-overlapping intervals $J_1, ..., J_l$ such that each $f^{-1}(J_j)$ consists of finitely many intervals in which $f$ is affine, and $|J_j|<\delta$ for each $j$. The role of the indices is symmetric, thus to simplify notation, we will focus on $J=J_1$ and explain how to construct $g$ and $h$ restricted to that. Denote the aforementioned intervals of $f^{-1}(J_1)$ by $I_1, ..., I_k$. In these intervals, we can successively define $g$ to have a single piece of slope $\pm 1$, sign in accordance with the one of $f'$, such that the ranges exhaust $J$. The resulting $g\in \mathrm{PA}(\lambda)\cap B(f, \delta)$. Now in $I_m$ the closed intervals $I_m^{-},I_m^{0}, I_m^{+}$ having the following properties are uniquely defined:
\begin{itemize}
    \item they follow each other in this order, non-overlapping, and their union is $I_m$.
    \item for a constant $\alpha$ to be fixed later, $|I_m^{-}| = \alpha |I_m|\frac{\sum_{i<m}|I_i|}{|J|}$ and $|I_m^{+}| = \alpha |I_m|\frac{\sum_{i>m}|I_i|}{|J|}.$
\end{itemize}
We define $h$ in each of these subintervals to be affine and having the same monotonicity as $g$ and $f$. More precisely, in $I_m^{0}$ we set $h$ such that $h(I_m^{0})=g(I_m)$, while in the other two intervals we set $h$ such that in $h(I_m)=f(I_m)=J$. Note that $h'=\frac{|J|}{\alpha|I_m|}$ in $I_m^{-}$ and $I_m^{+}$ for any $m$. We can also express
$$|I_m^{0}| = |I_m|\left(1-\alpha \frac{\sum_{i\neq m}|I_i|}{|J|}\right)=|I_m|\left(1-\alpha \frac{|J|-|I_m|}{|J|}\right).$$

\begin{figure}[h]
\centering
\includegraphics[width=12cm]{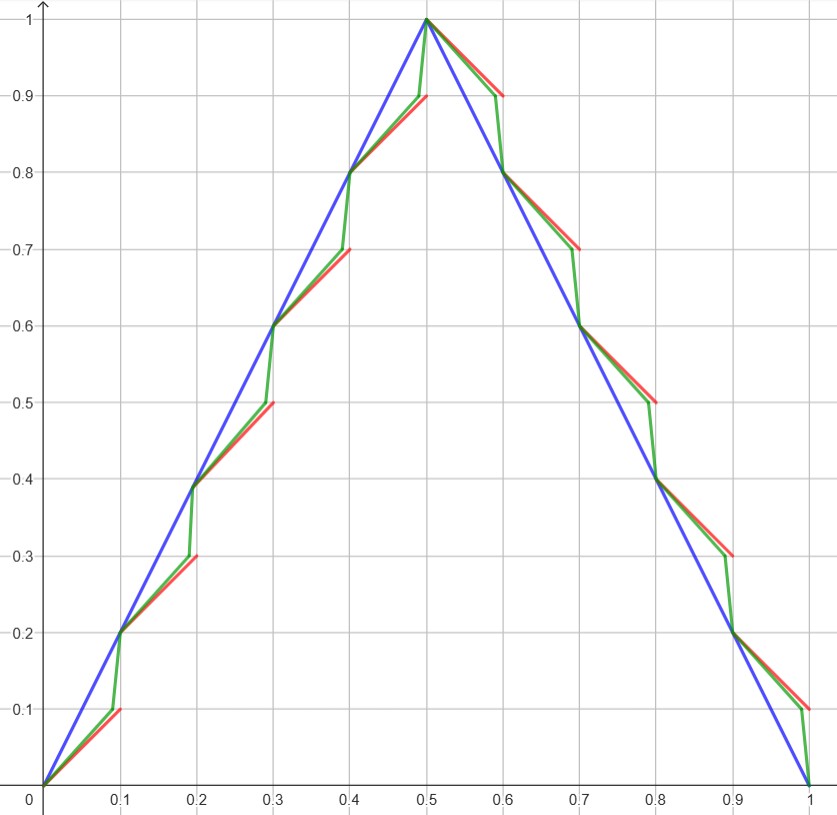}
\caption{Visualizing $g$ (red) and $h$ (green) for $f$ (blue) being the symmetric tent map. The interval $J_i$ ($i=1, \dots, 5)$ is fixed to be $J_i = [0.2\cdot(i-1), 0.2\cdot i]$, while $\alpha=0.9$.  \label{fig:low_entropy_construction}}
\end{figure}

It is immediate that $h$ is continuous as inside any such $I_m$ it is continuous, and in the endpoints it admits the same values as $f$. It is also clear that $h\in B(f, \delta)$ as $f(x)\in J_j$ for some $j$ yields $h(x)\in J_j$. Finally, it is also measure-preserving: first, it is sufficient to check the measure-preserving property for subintervals of $J$. Moreover, actually it is sufficient to check this property for the intervals $g(I_m)$ ($m=1, ..., k$), as the preimage of any such interval consists of finitely many intervals in which $h$ is piecewise affine. Observe that for $m\neq m'$, $h^{-1}(g(I_m))\cap I_{m'}\subseteq I_{m'}^{-}\cup I_{m'}^{+}$ has measure $\alpha \frac{|I_m|||I_{m'}|}{|J|}$, since $h$ acts on $I_{m'}^{-}\cup I_{m'}^{+}$ with a uniform derivative, while $h^{-1}(g(I_m))=I_m^0$. Thus we find
$$h^{-1}(g(I_m)) = |I_m|\left(1-\alpha \frac{|J|-|I_m|}{|J|}\right) + \sum_{m'\neq m}\alpha \frac{|I_m|||I_{m'}|}{|J|}=|I_m|.$$
Thus $h\in\mathrm{CPA}(\lambda)$ indeed. It remains to calculate its entropy, which is $\int_0^{1}\log |h'|$ according to Theorem \ref{thm:differentiable_maps}. We claim that for small enough $\alpha$, it is at most $\varepsilon$, that would conclude the proof. Constants $C_1, C_2, ...$ to be introduced from now on may depend on $\delta, f, (J_j)_{j=1}^{l}$, as these are fixed by when $\alpha$ is introduced.

As the number of intervals $(J_j)_{j=1}^{l}$ and number of base intervals of $f^{-1}(J_j)$ is fixed, it is sufficient to see $\int_{I_m}\log |h'|<C_1\varepsilon$. Partition this integral further to integrals over $I_m^{-}, I_m^{0}, I_m^{+}$. Notice that the length of $I_m^{-}, I_m^{+}$ is at most $\alpha$, and the derivative is at least $\frac{C_2}{\alpha}$. Thus the integral over these intervals is bounded by $C_3\alpha \log \frac{C_2}{\alpha}\to 0$, as $\alpha\to 0$. On the other hand, $h'\to 1$ in $I_m^{0}$ as $\alpha\to 0$, thus the integral over $I_m^{0}$ also goes to 0. We conclude that for small enough $\alpha$ we have $\int_{I_m}\log |h'|<C_1\varepsilon$.
\end{proof}

\section{Permutons from random automorphisms of the $d$-ary tree} \label{sec:random}

Following the introduction and Section \ref{subsection:random_aut_notation}, fix some $F\in \PP(\Sym(d))$, denote by $\Pi$ its support, and by $f$ the induced random measure-preserving bijection. In this section, $f$ is always understood as this random function. We have seen how $f$ can be identified with a random automorphism of the infinite rooted $d$-ary tree. Our ultimate goal in this section is to prove Theorem \ref{thm:random_automorphism}. The proof consists of two core parts: first, by giving an exponentially decaying large deviation bound on $H_n(\mu)/n$ we reduce the problem to the analysis of the means, second, we prove that these form an asymptotically log-periodic sequence with the desired asymptotics. The first of these steps is quite standard, but the second one needs some preparations. As another auxiliary note, we also look a bit into the pattern structure of $\mu_F$. These preparations form the next two subsections. 

\subsection{Patterns of the random automorphism}

Let $X_\Pi$ be the class of permutations which can be obtained from the trivial permutation of one element by finitely many steps of taking one entry and replacing it by an arbitrary subpermutation of some $\pi \in \Pi$. For example, if $d=2$, all length 3 permutations are in $X$, but $3142\notin X$.

\begin{lemma} \label{lemma:automorphism_finite_lin_entropy}
The followings hold for $X=X_\Pi$ and $\mu=\mu_f$:
\begin{enumerate}
    \item $X$ is a substitution-closed permutation class and $\pi_n(f)\in X$.
    \item If $\sigma\notin X$, then $t(\sigma, \mu_f)=0$ for any outcome.
    \item If $\sigma \in X$, then $t(\sigma, \mu_f)>0$ with probability 1.
    \item $X$ is a pattern-avoiding class.
    \item For some constant $K\geq 0$ depending only on $d$, $H_n(\mu_f)\leq Kn$, and hence $\upperlinent(\mu_f)<\infty$.
\end{enumerate}
\end{lemma}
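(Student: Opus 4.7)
The plan is to first dispose of the structural statements (1), (4), (5) by straightforward combinatorics, and then handle the probabilistic statements (2), (3) by exploiting the hierarchical construction of $\mu_f$.

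For Part 1, I would show by induction on the number of substitution steps that if $\sigma\in X$ then every subpermutation of $\sigma$ also lies in $X$ (since restriction commutes with substitution of a subpattern of some $\pi\in\Pi$) and that substitution preserves $X$ by concatenating the construction sequences of $\sigma$ and of the $\alpha_i$. The inclusion $\pi_n(f)\in X$ is immediate from the recursive definition $\pi_n=\pi_{n-1}[\alpha_1,\ldots,\alpha_{d^{n-1}}]$ with $\alpha_i\in\Pi$. For Part 4, the key observation is that any simple $\sigma\in X$ with $|\sigma|>1$ must satisfy $|\sigma|\leq d$: analysing the first nontrivial substitution step in the construction of $\sigma$, either all further steps are trivial, in which case $\sigma$ is a subpattern of some $\pi\in\Pi$, or a further nontrivial substitution occurs in a different entry of the top-level structure, producing a proper substitution block in $\sigma$ and contradicting simplicity. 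Since simple permutations of length exceeding $d$ exist for every $d$ (for instance of every length $\geq 4$), $X$ avoids any such and is a proper pattern-avoiding class. Part 5 then follows from the Marcus--Tardos theorem: $|X\cap\Sym(n)|\leq c^n$ for some $c=c(d)$, so combined with Part 2, $H_n(\mu_f)\leq\log|X\cap\Sym(n)|\leq n\log c=:Kn$.

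For Part 2, I would sample $n$ points $(x_i,f(x_i))$ from $\mu_f$; almost surely the $x_i$ are distinct and avoid the countable set of all base-square boundaries, so for each sufficiently large level $m$ the samples lie in distinct level-$m$ base squares and the pattern they determine agrees with a subpattern of $\pi_m$, which lies in $X$ by Part 1. For Part 3, fix $\sigma\in X$ together with a witnessing construction sequence of length $k$ using specific elements $\beta_1,\ldots,\beta_k\in\Pi$. Any level-$k$ subtree of the random automorphism whose vertex labels along the relevant descendants match this sequence produces a level-$k$ base square $B$ containing $|\sigma|$ further base squares arranged in the pattern $\sigma$; sampling one point into each of them produces a configuration with pattern $\sigma$, an event of positive probability. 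Since disjoint level-$k$ subtrees succeed independently with fixed positive probability $\prod_i F(\beta_i)$ and there are infinitely many of them, Borel--Cantelli gives $t(\sigma,\mu_f)>0$ almost surely, and countability of $X$ promotes this to a single probability-$1$ event covering all $\sigma\in X$ simultaneously.

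The main technical subtlety I anticipate is the transition in Part 3 from ``a matching substitution subtree appears almost surely'' to ``the pattern density $t(\sigma,\mu_f)$ is strictly positive''. The clean way is to notice that once a matching level-$k$ subtree is identified, $|\sigma|$ disjoint sub-base-squares arranged in pattern $\sigma$ sit inside the corresponding base square, and the positive-probability event of sampling exactly one point into each of them already realises $\sigma$, so the finite combinatorial verification reduces to comparing marginals of $\mu_f$ on explicit rectangles. All remaining items are essentially induction on the construction of $X$ together with the classical Marcus--Tardos bound.
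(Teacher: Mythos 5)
Your proof is correct, and for parts (1), (2), (3), (5) it is essentially the paper's own argument: (1) by tracking substitution steps, (2) by observing that the sampled pattern is a.s.\ a subpattern of some $\pi_m\in X$ (the paper obtains the same conclusion more compactly from $\pi_n(f)\to\mu_f$ and closedness of $X$), (3) by a second Borel--Cantelli argument over independent level-$k$ subtrees, and (5) via Marcus--Tardos together with (2). The genuine point of departure is (4). The paper exhibits the explicit permutation $(2d-1,2d-3,\dots,1,2d,2d-2,\dots,2)\in\Sym(2d)$ and shows directly, by a block/interval argument at each level of the tree, that it has density zero in every outcome of $\mu_f$ and hence lies outside $X$. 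You instead argue abstractly via simple permutations: after the first nontrivial substitution in the construction sequence, any further nontrivial inflation of an entry of the length-$\geq 2$ top permutation produces a proper nontrivial block that persists under later inflations, so every simple permutation in $X$ of length $>1$ must already be a subpattern of some $\pi\in\Pi$, hence of length at most $d$; since simple permutations of length $>d$ exist, $X$ is a proper class. Your phrasing ``a further nontrivial substitution occurs in a different entry of the top-level structure'' is slightly imprecise -- the relevant dichotomy is simply whether any subsequent step is nontrivial, regardless of which entry it hits -- but the underlying block-persistence argument is sound. Your route is more conceptual and ties into the Albert--Atkinson simple-permutations framework, while the paper's is fully explicit and self-contained; both yield an excluded pattern depending only on $d$, as needed for (5). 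You also added the observation that countability of $X$ upgrades (3) to a single almost-sure event over all $\sigma\in X$, which the paper leaves implicit.
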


\begin{proof}
    Let $\mu=\mu_f$, $\pi_n(f)=\pi_n$.

    (1) is trivial, and (2) immediately follows from it due to $\pi_n(f)\to \mu_f$.

    For (3), observe that as $\sigma \in X$, for $n$ with $dn\geq |\sigma|$, $\pi_n=\pi_n(f)$ can be built such that $\sigma$ is a pattern of $\pi_n$. In this case, if we sample $|\sigma|$ points from $\mu_f$, with positive probability the sample points are all in the corresponding base squares of $\mu_f$, that is $t(\sigma, \mu_F)>0$. However, upon building $f$ there are infinitely many base squares in which the next $n$ steps of the sampling of $f$ can look like $\pi_n$. Thus at least one of these events occur with probability 1.
    
    For (4), take the pattern (in one-line notation)
    $$\sigma = (2d-1, 2d-3, \dots, 1, 2d, 2d-2, \dots, 2)\in \Sym(2d)$$
    that is we list all the odd, then all the even numbers in decreasing order. We claim that $\sigma$ can appear in $\mu$ only if all the sample points are in the same base square. Indeed, if we sample $2d$ points $p_1, \dots, p_{2d}$ from $\mu$, each base square $Q$ contains a subset of them with consecutive $x$ and $y$ coordinates. This yields that if $\tau$ denotes the pattern determined, the integer interval $[2d]$ can be partitioned into $d$ disjoint, possibly degenerate integer intervals $I_1, ..., I_{d}$ such that the intervals $\tau(I_k)$ are disjoint. It is easy to see that for $\tau=\sigma$, only trivial partitions, with all but one $I_k$ being empty have this property. However, by the same argument if $\sigma$ appear in one of the base squares $\pi_1$, all sample points have to be in the same base square of $\pi_2$. By induction, the same can be told at any level, which yields that $t(\sigma, \mu)=0$. 
    
    (5) follows from (4), as $\mu^{(n)}$ is concentrated on $\sigma$-avoiding permutations, and the number of such permutations can be bounded from above by some $C^n$ according to the Marcus--Tardos theorem \cite{MARCUS2004153}, $K= \log C$ is an appropriate choice.
\end{proof}

Note that in the binary case, if $\Pi=\Sym(2)$, $\mu_f$ evades the patterns $3142$ and $2413$, and $\mu_f$ contains precisely the separable permutations. Observe furthermore that (5) of Lemma \ref{lemma:automorphism_finite_lin_entropy} also quickly follows from the explicit formula provided by Theorem \ref{thm:random_automorphism}, without referring to the Marcus--Tardos theorem.

\subsection{Self-averaging sequences and log-periodicity} \label{sec:self_avg}

The following stochastic process will be relevant: starting with some integer $Y_0$, consider the process $Y_{p+1}\sim \Binom(Y_p, q)$, and let
$$\delta_l(n)=\PP(\exists p: Y_p=l\ |\ Y_0=n).$$
We call the process $Y_p$ {\it simple binomial decay}. While one would probably expect that the sequence $\delta_l(n)$ converges, in \cite{ESS1993} it is proved that it is not the case. Instead, $\delta_l(n)$ is asymptotically log-periodic:

\begin{theorem}[ESS1993, Theorem~2-3., Theorem 2k-3k.]\label{thm:self_averaging_asymptotics}
    The log-periodic limit of $\delta_l(n)$ in base $q^{-1}$ is the smooth function with Fourier expansion 
    $$L_l(x) = \frac{1-q^l}{l! |\log q|}\sum_{r=-\infty}^{\infty}\Gamma\left(l+\frac{2\pi i r}{\log q}\right)\exp (2\pi i r x)$$
\end{theorem}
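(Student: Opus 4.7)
The plan has three stages: derive an exact harmonic-sum formula for $\delta_l(n)$, extract its logarithmic-scale asymptotics, and compute the Fourier coefficients via a Mellin integral.

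\emph{Stage 1 (exact formula).} Simple binomial decay is a thinning: each of the $n$ initial ``particles'' survives independently to time $p$ with probability $q^p$, whence $Y_p\mid Y_0=n\sim \Binom(n,q^p)$. Once the chain visits $l$, we have $Y_{p+1}=l$ with probability exactly $q^l$, and otherwise $Y_{p+1}<l$, in which case the chain has left $l$ forever (sample paths are monotone non-increasing). Consequently, conditional on the chain ever hitting $l$, the number of time points with $Y_p=l$ is geometric with success probability $1-q^l$, and taking the expectation of the visit count yields
\[
\delta_l(n) = (1-q^l)\sum_{p=0}^\infty \PP(Y_p=l\mid Y_0=n) = (1-q^l)\binom{n}{l}\sum_{p=0}^\infty q^{pl}(1-q^p)^{n-l}.
\]

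\emph{Stage 2 (harmonic sum asymptotics).} Writing $x=\log_{q^{-1}} n$ and substituting $t=p-x$ (so $q^p=q^t/n$), the inner sum equals $n^{-l}\sum_{p\ge 0}q^{tl}(1-q^t/n)^{n-l}$. As $n\to\infty$ with the fractional part $u=\{x\}$ held fixed, $\binom{n}{l}/n^l\to 1/l!$ and $(1-q^t/n)^{n-l}\to e^{-q^t}$, while the admissible $t$-values fill out $\mathbb{Z}-u$. A dominated-convergence argument yields
\[
\delta_l(n) \longrightarrow \frac{1-q^l}{l!}\,\Phi_l(u), \qquad \Phi_l(u):= \sum_{k\in\mathbb{Z}} q^{(k-u)l}\exp\bigl(-q^{k-u}\bigr),
\]
which is manifestly $1$-periodic and $C^\infty$, since its summands decay super-exponentially as $k\to -\infty$ (through $\exp(-q^{k-u})$) and geometrically as $k\to+\infty$ (through $q^{(k-u)l}$), with the same bounds surviving arbitrary $u$-differentiation.

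\emph{Stage 3 (Fourier coefficients via Mellin).} Setting $g(v)=q^{vl}e^{-q^v}$ so $\Phi_l(u)=\sum_k g(k-u)$, the periodic sum unfolds into a single integral over $\mathbb{R}$:
\[
\widehat{\Phi}_l(r) = \int_0^1 \Phi_l(u) e^{-2\pi i r u}\,du = \int_{-\infty}^\infty g(w) e^{2\pi i r w}\,dw.
\]
The substitution $t=q^w$ (with $dw=dt/(t\log q)$) converts this into a Gamma integral:
\[
\widehat{\Phi}_l(r) = \frac{1}{|\log q|}\int_0^\infty t^{\,l-1+2\pi i r/\log q}e^{-t}\,dt = \frac{1}{|\log q|}\Gamma\!\left(l+\frac{2\pi i r}{\log q}\right).
\]
Because $\lfloor x\rfloor$ and $r$ are integers, $e^{2\pi i r \{x\}}=e^{2\pi i r x}$, so inserting the Fourier series of $\Phi_l$ into the limit of Stage 2 recovers the claimed formula for $L_l(x)$ exactly.

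The main obstacle is upgrading the pointwise convergence of Stage 2 to local uniform convergence of $A^{(m)}(x)=\delta_l([q^{-(x+m)}])$, which is what Definition \ref{def:log_periodic} demands. This requires a dominant that is uniform in $u$ on compact sets, obtained by splitting the $p$-range into three regimes: for $p\le x-C$ the factor $(1-q^p)^{n-l}\le\exp(-q^p(n-l))$ is exponentially small in $n$ once $C$ is large; for $|p-x|\le C$ a Taylor expansion of $\log(1-q^t/n)$ gives $(1-q^t/n)^{n-l}=e^{-q^t}(1+O(q^{2t}/n))$; and for $p\ge x+C$ the summand is bounded by $q^{pl}$, a convergent geometric tail. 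Combined with the rapid vertical decay $|\Gamma(l+2\pi i r/\log q)|\sim|r|^{l-1/2}\exp(-\pi^2|r|/|\log q|)$ coming from Stirling, which makes the Fourier series defining $L_l$ absolutely convergent and smooth, this closes the argument.
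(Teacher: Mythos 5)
The paper does not prove this statement itself; it imports it from \cite{ESS1993}, remarking only on the reparametrization between the ``geometric leader'' and ``binomial decay'' formulations and on upgrading their subsequential statement to the locally uniform convergence required by Definition~\ref{def:log_periodic}. Your argument is therefore a self-contained reconstruction, and it is correct. Stage~1's thinning identity $Y_p\mid Y_0=n\sim\Binom(n,q^p)$ together with the geometric excursion count at level $l$ (valid because the chain is non-increasing, so visits to $l$ form a single block) gives the exact series $\delta_l(n)=(1-q^l)\binom{n}{l}\sum_{p\ge0}q^{pl}(1-q^p)^{n-l}$ cleanly. Stage~2's harmonic-sum limit along the fractional-part fibers and Stage~3's unfolding of the periodization into $\int_{\mathbb R}q^{wl}e^{-q^w}e^{2\pi i r w}\,dw$ and the substitution $t=q^w$ correctly produce $\frac{1}{|\log q|}\Gamma(l+2\pi i r/\log q)$, and the Stirling estimate $|\Gamma(l+2\pi i r/\log q)|\sim |r|^{l-1/2}\exp(-\pi^2|r|/|\log q|)$ gives absolute convergence and smoothness of the Fourier series. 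Your closing paragraph correctly identifies the only remaining work---promoting pointwise convergence at fixed $u$ to the local uniformity that the paper's definition demands, via a three-regime dominant in $p$ uniform over compact $u$---which is precisely the gap the paper itself acknowledges in \cite{ESS1993} and asserts is easy to fill; you fill it in the same way one would expect. Overall this is a sound proof via the standard harmonic-sum/Mellin route, consistent with (and more explicit than) the reference the paper relies on.
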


\begin{megj}
    By inspecting the Fourier coefficients, we can see that $L_l(x)$ is not constant, hence $\delta_l(n)$ is not convergent!
\end{megj}

In \cite{ESS1993} this story is told differently: $n$ players independently score points according to geometric distribution, what is the probability $\widetilde{\delta}_l(n)$ of having $l$ players tied for the first place? Observe that the number of players having at least $p$ points is distributed as $Y_p$, thus
\begin{equation}\label{eq:reparam}
\widetilde{\delta}_n = \PP(\exists p: Y_p=l, Y_{p+1}=0\ |\ Y_0=n) = \frac{(1-q)^l}{1-q^l}\delta_n.
\end{equation}
This equivalence of these contexts is also noted in \cite{ESS1993}, who refer to the problem \cite{Rade} for the binomial decay context, solved in this form about the same time by \cite{Mann}. The formula we present in Theorem \ref{thm:self_averaging_asymptotics} takes into account the reparametrization \eqref{eq:reparam}. We also note \cite{ESS1993} formulates this limiting behaviour along subsequences, essentially not noting the uniformity of convergence our definition of asymptotic log-periodicity requires, but it is easy to see that their proof implies this as well.

We will consider series of log-periodic functions. Note that in general infinite sums of asymptotically log-periodic sequences might fail to be asymptotically log-periodic due to different convergence speeds. We give the following sufficient condition:

\begin{lemma}\label{lemma:sum_log_periodic}
    Assume that $a_i(n)$, $i=1, 2, \dots$ are asymptotically log-periodic sequences with base $\eta$ and log-periodic limits $A_i(x)$. If the series $\sum_{i=1}^{\infty}a_i(n)$ is uniformly convergent, then the series $\sum_{i=1}^{\infty}A_i(x)$ is also uniformly convergent, and $a(n)=\sum_{i=1}^{\infty}a_i(n)$ is also asymptotically log-periodic with log-periodic limit $A(x)=\sum_{i=1}^{\infty}A_i(x)$.
\end{lemma}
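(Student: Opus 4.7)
The plan is to verify both assertions via a standard three-epsilon interchange of limits, with the uniform convergence hypothesis providing uniform control of the tails across all $n$. Introduce the shorthand $S_N^{(m)}(x) = \sum_{i=1}^N a_i([\eta^{x+m}])$ for the partial sums of $A^{(m)}$ and $T_N(x) = \sum_{i=1}^N A_i(x)$ for the partial sums of the log-periodic limits. Note that for each fixed $N$, $S_N^{(m)} \to T_N$ locally uniformly as $m \to \infty$, since log-periodicity is obviously preserved by finite sums.

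The first step is to establish the uniform convergence of $\sum_i A_i(x)$. Fix a compact interval $K \subseteq \mathbb{R}$ and $\varepsilon > 0$. By the uniform convergence of $\sum_i a_i(n)$, choose $N_0$ such that $|\sum_{i > N_0} a_i(n)| < \varepsilon$ for every $n$. Then for $N > M \geq N_0$ and $x \in K$,
$$|T_N(x) - T_M(x)| = \lim_{m \to \infty} |S_N^{(m)}(x) - S_M^{(m)}(x)| \leq \varepsilon,$$
since each $S_N^{(m)}(x) - S_M^{(m)}(x) = \sum_{i=M+1}^{N} a_i([\eta^{x+m}])$ is bounded by $\varepsilon$ uniformly in $m$ and $x$. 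This shows the partial sums $T_N$ form a uniformly Cauchy sequence on $\mathbb{R}$, and hence converge uniformly to some function $A$, which coincides with the pointwise sum $\sum_i A_i$.

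For the second step, I verify that $A^{(m)}(x) = a([\eta^{x+m}])$ converges to $A(x)$ locally uniformly. Fix a compact $K \subseteq \mathbb{R}$ and $\varepsilon > 0$. Choose $N$ large enough that both
$$\sup_n \Big|\sum_{i > N} a_i(n)\Big| < \varepsilon/3 \quad\text{and}\quad \sup_x \Big|\sum_{i > N} A_i(x)\Big| < \varepsilon/3,$$
using the hypothesis and the uniform convergence of $T_N \to A$ proved above. Then, since each $a_i$ is log-periodic with limit $A_i$ and the sum is finite, pick $m_0$ such that $|S_N^{(m)}(x) - T_N(x)| < \varepsilon/3$ for all $m \geq m_0$ and $x \in K$. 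For such $m$ and $x$ the triangle inequality gives
$$|A^{(m)}(x) - A(x)| \leq |A^{(m)}(x) - S_N^{(m)}(x)| + |S_N^{(m)}(x) - T_N(x)| + |T_N(x) - A(x)| < \varepsilon,$$
which establishes local uniform convergence and finishes the proof.

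The argument is essentially bookkeeping; there is no genuine analytic difficulty beyond ensuring that the tail bound inherited from $\sum_i a_i(n)$ transfers to $\sum_i A_i(x)$. The mild subtlety is that one cannot a priori bound the tail of $\sum_i A_i$ by inspecting the limits $A_i$ individually, so one must first pass the tail bound through finite partial sums using the local uniform convergence $S_N^{(m)} \to T_N$, as done above.
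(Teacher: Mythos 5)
Your proof is correct and follows essentially the same route as the paper's: both establish that the partial sums of $\sum_i A_i$ are uniformly Cauchy by passing the tail estimate for $\sum_i a_i(n)$ through the limit $m\to\infty$ on finite blocks, and then both identify the log-periodic limit via the standard three-$\varepsilon$ split into tail of $a$, finite-sum convergence, and tail of $A$. The only difference is notational (your explicit $S_N^{(m)}$, $T_N$ bookkeeping versus the paper's more compact phrasing), not substantive.
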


\begin{proof} 
    Fix $\varepsilon>0$. By the assumption on the uniform convergence, we can find $i_0$ such that for any $i_1, i_2>i_0$ and every $n$
    $$\left|\sum_{i=i_1}^{i_2}a_i(n)\right|<\varepsilon.$$
    Consequently,
    $$\left|\sum_{i=i_1}^{i_2}A_i(x)\right| = \lim_{m\to\infty} \left|\sum_{i=i_1}^{i_2}a_i([\eta^{x+m}])\right|<\varepsilon$$
    for every $x$. This shows that the series $\sum_{i=1}^{\infty}A_i(x)$ is uniformly Cauchy, $A(x)=\sum_{i=1}^{\infty}A_i(x)$ is a locally uniform limit.

    To identify $A(x)$ with the log-periodic limit of $a(n)$, note that for fix $\varepsilon>0$ we can find $M_0$ such that for any $M>M_0$ and $n>0, x\in [0, 1]$
    $$\left|\sum_{i=M}^{\infty}a_i(n)\right|<\varepsilon,\ \left|\sum_{i=M}^{\infty}A_i(x)\right|<\varepsilon.$$
    Moreover, by the definition of log-periodic limits, we can find $m_0$ such that for any $m>m_0$ and $x\in [0, 1]$
    $$\left|\sum_{i=1}^{M}a_i([\eta^{x+m}]) - \sum_{i=1}^{M}A_i(x)\right|<\varepsilon.$$
    Combining these observations, we find for any $m>m_0$ and $x\in[0, 1]$ that
    $$\left|\sum_{i=1}^{\infty}a_i([\eta^{x+m}]) - \sum_{i=1}^{\infty}A_i(x)\right|<3\varepsilon,$$
    yielding that $A(x)$ is indeed the log-periodic limit of $a(n)$.
\end{proof}

\subsection{Proof of Theorem \ref{thm:random_automorphism}}

\begin{proof}[Proof of Theorem \ref{thm:random_automorphism}]
    Denote by $X_n$ the random variable $H_n(\mu)$. Due to (5) of Lemma \ref{lemma:automorphism_finite_lin_entropy}, we know that $0\leq X_n \leq K$.
    
    Most of the proof is covered by the next two claims:
    \begin{claim}\label{claim:conv_of_means}
        The sequence $\EE X_n /n$ is log-periodic in base $d=1/q$, with the log-periodic limit being smooth and having Fourier expansion
        $$\sum_{r=-\infty}^{\infty}\left(\sum_{l=1}^{\infty}\rho_{l+1}\frac{1-q^l}{(l+1)! |\log q|}\Gamma\left(l+\frac{2\pi i r}{\log q}\right)\right)\exp (2\pi i r x),$$
        for some sequence $\rho_l = O(\log l)$.
    \end{claim}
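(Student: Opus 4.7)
The plan is to set up a self-similar recurrence for $a_n := \EE X_n$, iterate it so the analysis reduces to the simple binomial decay studied in Section \ref{sec:self_avg}, and then transfer the termwise log-periodic convergence furnished by Theorem \ref{thm:self_averaging_asymptotics} to the series via Lemma \ref{lemma:sum_log_periodic}.

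I would begin by decomposing $\mu = \mu_F$ as $\mu = \sum_{i=1}^d (1/d)\mu_i$, where each $\mu_i$ is the rescaled restriction of $\mu$ to the $i$-th base square of $\pi_1$. The $\mu_i$ are pairwise geometrically separated because the base squares of any $\pi_1 \in \Sym(d)$ lie in distinct rows and columns, and by the recursive construction of $\mu_F$ in Section \ref{subsection:random_aut_notation} each $\mu_i$ is distributed as $\mu$ itself. Applying Lemma \ref{lemma:geom_sep} and taking expectations yields
\begin{equation*}
a_n \;=\; \phi_n + d\sum_{k=0}^n \binom{n}{k}d^{-k}(1-d^{-1})^{n-k}\,a_k,
\end{equation*}
with $\phi_n := \EE[\omega_{n,d}]$ satisfying $0\leq \phi_n\leq d\log(n+d)$ and $\phi_0 = \phi_1 = 0$ (forced by $a_0 = a_1 = 0$). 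Normalizing $b_n := a_n/n$ and using $k\binom{n}{k} = n\binom{n-1}{k-1}$ converts this to $b_n = \phi_n/n + \EE[b_{1+W_1}]$ with $W_1 \sim \Binom(n-1, 1/d)$. Iterating along the simple binomial decay $W_0 = n-1$, $W_{p+1}\mid W_p \sim \Binom(W_p, 1/d)$, and using that after first hitting $m\geq 1$ the process dwells at $m$ for a geometric number of steps with success parameter $1-d^{-m}$, I obtain
\begin{equation*}
b_n \;=\; \sum_{m\geq 1}\frac{\phi_{m+1}}{(m+1)(1-d^{-m})}\,\delta_m(n-1).
\end{equation*}

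Next, Theorem \ref{thm:self_averaging_asymptotics} (with $q = 1/d$ and absorbing the innocuous shift $n\mapsto n-1$) tells me that each $\delta_m(n-1)$ is asymptotically log-periodic in base $d$ with smooth limit $L_m(x)$. A termwise transfer of these limits makes the Fourier coefficient of $e^{2\pi i r x}$ equal to
\begin{equation*}
\sum_{m\geq 1}\rho_{m+1}\frac{1-d^{-m}}{(m+1)!\log d}\,\Gamma\!\left(m - \frac{2\pi i r}{\log d}\right),
\end{equation*}
with $\rho_{l+1} := \phi_{l+1}/(1-d^{-l}) = O(\log l)$ by the bound on $\phi$; this reproduces \eqref{eq:aut_thm_statement}. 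The main obstacle will be establishing the uniform-in-$n$ convergence required by Lemma \ref{lemma:sum_log_periodic} so that the termwise transfer is legitimate. I would use that the $r=0$ Fourier coefficient of $L_m$ equals $(1-d^{-m})/(m\log d) = O(1/m)$ while the $r\neq 0$ harmonics are exponentially suppressed in $|r|$ via Stirling's asymptotics for $\Gamma$ along vertical lines, so $|L_m(x)|\lesssim 1/m$ uniformly. A matching bound $\delta_m(n-1) = O(1/m)$ valid once $n$ is large compared to $m$ follows from $\delta_m \leq (1-d^{-m})\sum_p\PP(W_p = m)$ combined with the explicit form $\PP(W_p = m\mid W_0 = N) = \binom{N}{m}d^{-pm}(1-d^{-p})^{N-m}$; this makes the tail $\sum_{m>M}(\cdot)$ of order $\sum_{m>M}(\log m)/m^3$ uniformly, and the finitely many remaining small-$n$ cases can be absorbed by taking $M$ large enough.
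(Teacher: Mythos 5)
Your derivation of the master identity
\[
b_n=\sum_{m\geq 1}\frac{\phi_{m+1}}{(m+1)(1-d^{-m})}\,\delta_m(n-1)
\]
is correct and is, up to the change of variables $\rho_{m+1}=\phi_{m+1}/(1-d^{-m})$, exactly the formula the paper arrives at; you get there by a probabilistic iteration of the one-step recursion together with the geometric dwell-time observation, whereas the paper first expands $y_n=\sum_l\alpha_l(n)\rho_l$ and then identifies $\alpha_l(n)$ with the expected number of $l$-sized classes in the recursive labelling process, reducing to the same $\delta_l$. Both are fine; the net route to the final series and to invoking Theorem \ref{thm:self_averaging_asymptotics} and Lemma \ref{lemma:sum_log_periodic} is the same.

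The gap is in the uniform-convergence step, where the two pointwise bounds you claim are false. The sequence $\delta_m(n)$ is \emph{not} $O(1/m)$ uniformly in $n$: it averages to $\frac{1-d^{-m}}{m\log d}=O(1/m)$ over a multiplicative period, but at its peak (when $\log_d n$ is tuned so that some $Y_p$ has mean essentially $m$) the local CLT gives $\delta_m(n)\asymp 1/\sqrt m$. Correspondingly, $|L_m(x)|$ is not $\lesssim 1/m$ uniformly either: the ratio $|\Gamma(m+it)|/\Gamma(m)\approx e^{-t^2/(2m)}$ shows that the Fourier coefficients of $L_m$ are not suppressed until $|r|\gtrsim\sqrt m$, so $L_m$ has a peak of height $\asymp 1/\sqrt m$ on a window of width $\asymp 1/\sqrt m$, consistent with the $O(1/m)$ average. (The exponent in your tail estimate, $\sum_{m>M}(\log m)/m^3$, does not follow from either bound; it would be $\sum(\log m)/m^2$ from your claimed $O(1/m)$ and is $\sum(\log m)/m^{3/2}$ with the correct $O(1/\sqrt m)$.) Luckily the argument survives: even $\delta_m(n)=O(1/\sqrt m)$, uniform in $n$, makes the tail $O\bigl(\sum_{m>M}(\log m)/m^{3/2}\bigr)\to 0$, so Lemma \ref{lemma:sum_log_periodic} still applies. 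Cleaner, and what the paper actually does, is to forgo pointwise bounds on individual $\delta_m$ and instead bound the \emph{partial sums} $\sum_{l\in I}\delta_l(n)=O(1)$ over intervals $I=[l_0,[\sqrt d\,l_0]]$, which is just the expected number of visits of the binomial decay to $I$; multiplying by $\sup_{l\in I}\rho_{l+1}/(l+1)=O\bigl((\log l_0)/l_0\bigr)$ and summing over the geometric sequence of such intervals gives the uniform Cauchy property directly, with no need to prove a sharp local-CLT-type estimate on individual hitting probabilities.
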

    \begin{claim}\label{claim:large_dev_principle}
        Let $Y_n = X_n/n$. For fixed $s>0$, there exists some $m_0$ such that if $m\geq m_0$ and $n>d^{2m}$, we have
        $$\PP\left(\left| Y_n - \EE Y_n\right|\geq s\right)\leq 2\exp\left(-\frac{s^2 d^m}{2K^2}\right).$$
    \end{claim}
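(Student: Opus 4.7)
The plan is to exploit the self-similarity of $\mu = \mu_F$ at level $m$ together with Lemma \ref{lemma:geom_sep} and Hoeffding's inequality. The labels assigned at the top $m$ levels of the tree determine a decomposition
$$\mu = \sum_{|s|=m} d^{-m}\mu_s,$$
where $\mu_s$ is the rescaled random permuton determined by the subtree rooted at $s$. Each $\mu_s$ is itself distributed as $\mu_F$, the family $(\mu_s)_{|s|=m}$ is i.i.d., and because distinct level-$m$ base rectangles have disjoint $x$- and $y$-projections, the $\mu_s$ are pairwise geometrically separated.

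Apply Lemma \ref{lemma:geom_sep} to write
$$H_n(\mu) = \omega_{n,d^m} + \sum_{|s|=m} f(n,\mu_s), \qquad f(n,\mu_s) := \sum_{k=0}^{n}\binom{n}{k}d^{-mk}(1-d^{-m})^{n-k}\,H_k(\mu_s),$$
with $0 \leq \omega_{n,d^m} \leq d^m\log(n+d^m)$. By part (5) of Lemma \ref{lemma:automorphism_finite_lin_entropy}, $H_k(\mu_s)\leq Kk$ almost surely, so each $f(n,\mu_s)$ is deterministically bounded by $K\,\EE[\mathrm{Binom}(n,d^{-m})] = Kn/d^m$. Thus $\sum_{|s|=m} f(n,\mu_s)$ is a sum of $d^m$ i.i.d.\ random variables each taking values in $[0, Kn/d^m]$, and Hoeffding's inequality yields
$$\PP\!\left(\left|\sum_{|s|=m} f(n,\mu_s) - \EE\!\sum_{|s|=m} f(n,\mu_s)\right| \geq \tfrac{s}{2}\,n\right) \leq 2\exp\!\left(-\frac{s^2 d^m}{2K^2}\right).$$

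Finally, the remainder $\omega_{n,d^m}$ and its expectation jointly contribute at most $2d^m\log(n+d^m)$ to $|H_n(\mu) - \EE H_n(\mu)|$. Under the constraint $n > d^{2m}$ this quantity is bounded by $2d^{-m}n(1+\log n)$ relative to $n$, which goes to $0$ uniformly as $m\to\infty$; choose $m_0 = m_0(s)$ so that for $m\geq m_0$ and all $n>d^{2m}$ this error does not exceed $(s/2)n$. Combining with the Hoeffding bound yields $|Y_n - \EE Y_n|\geq s$ only with probability at most $2\exp(-s^2 d^m/(2K^2))$, as required.

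The main bookkeeping point is the interplay between $m$ and $n$: one needs $d^m \ll \sqrt{n}$ so that the cost of Lemma \ref{lemma:geom_sep}, which grows like $d^m\log n$, remains negligible at the scale $sn$. This is precisely the regime $n>d^{2m}$ appearing in the statement. The probabilistic content is simply Hoeffding applied to the $d^m$ i.i.d.\ contributions produced by the level-$m$ decomposition; no martingale machinery is needed because the self-similar decomposition already exposes independence at each fixed level.
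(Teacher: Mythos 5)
Your approach is the same as the paper's: the level-$m$ geometrically separated decomposition via Lemma \ref{lemma:geom_sep}, the i.i.d.\ level-$m$ contributions each bounded deterministically by $Kn/d^m$ using part (5) of Lemma \ref{lemma:automorphism_finite_lin_entropy}, Hoeffding on the $d^m$ summands, and absorbing the $\omega_{n,d^m}$ error by exploiting $n>d^{2m}$.

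One slip in the bookkeeping: you bound the error, relative to $n$, by $2d^{-m}(1+\log n)$, and assert this tends to $0$ uniformly over $n>d^{2m}$ as $m\to\infty$. It does not, since for any fixed $m$ the factor $1+\log n$ is unbounded over $n>d^{2m}$. The correct estimate uses $d^m<\sqrt{n}$ on \emph{both} factors, giving
$$\frac{2\,d^m\log(n+d^m)}{n}\le\frac{2\sqrt{n}\log(2n)}{n}=\frac{2\log(2n)}{\sqrt{n}},$$
which does tend to $0$ uniformly over $n>d^{2m}$ as $m\to\infty$ (it is a decreasing function of $n$ for $n$ large, evaluated on $n>d^{2m}$). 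This is exactly the bound used in the paper. Your closing remark about needing $d^m\ll\sqrt n$ shows you had the right intuition; the intermediate inequality just needs to be stated consistently with it.
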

    Proving these claims would be sufficient. Indeed, by Claim \ref{claim:conv_of_means}, it suffices to check that for fixed $t>0$, almost surely only finitely many of the events $\{ \left| Y_n - \EE Y_n\right|\geq s\}$ occur. By Borel--Cantelli lemma, this follows from $\sum_{n=1}^{\infty}p_n<\infty$ for
    $$p_n = \PP\left(\left| Y_n - \EE Y_n\right|\geq s\right).$$
    This is equivalent to $\sum_{n=d^{2m_0}+1}^{\infty}p_n<\infty$. For such $n$, we can fix $m$ with $d^{2m}<n\leq d^{2(m+1)}$, and invoke Claim \ref{claim:large_dev_principle} with this $m$ to find
    $$\sum_{n=d^{2m_0}+1}^{\infty}p_n\leq \sum_{m=m_0}^{\infty}2d^{2m} \exp\left(-\frac{s^2 d^m}{2K^2}\right)<\infty.$$

    We will use the geometrically separated structure of $\mu$ in the proof of both claims. Looking at the base squares of $\pi_m$, let $\nu_{1}, ..., \nu_{d^m}$ denote the normalized restrictons of $\mu$ to these, respectively. These pre-permutons are pairwise geometrically separated. Upon sampling $n$ points from $\mu$, denote the random number of points ending up in the $i$th base square of $\pi_m$ by $k_i$. Then according to Lemma \ref{lemma:geom_sep}, with a random variable $Z_{n,m}=Z_{n,m}(\mu)$ with $$0<Z_{n, m}\leq d^m\log (n+d^m)$$
    we have
    \begin{equation}\label{eq:entropy_decomp}
    X_n = Z_{n, m} + \sum_{i=1}^{d^m} \sum_{k_i=0}^{n}\binom{n}{k_i} d^{-mk_i}(1-d^{-m})^{n-k_i}H_{k_i}(\nu_i)=Z_{n, m} + \sum_{i=1}^{d^m}V_{i, n, m}.
    \end{equation}
    Notice that the random variables $V_{i, n, m}$ are i.i.d., each of them depending on the measure $\nu_i$ only. Moreover, as the random measure $\nu_i$ has the same distribution as $\mu$, we can plug in $H_{k_i}(\nu_i)\leq K k_i$ to find that
    $$0\leq V_{i, n, m} \leq K \sum_{k_i=0}^{n}\binom{n}{k} d^{-mk_i}(1-d^{-m})^{n-k_i} k_i= \frac{nK}{d^m},$$
    as the latter sum is just the expectation of a random variable with distribution $\mathrm{Binom}(n, 1/d^m).$ 

    \begin{proof}[Proof of Claim \ref{claim:large_dev_principle}]
    Due to the choice of $n, m$, we can write actually $$0<Z_{n,m}\leq d^m\log 2n\leq \sqrt n \log 2n.$$
    For $W_{i, n, m}=V_{i, n, m}/n$, we have the bounds
    $$0\leq W_{i, n, m}\leq \frac{K}{d^m},$$
    and
    $$Y_n = \frac{Z_{n,m}}{n} + \sum_{i=1}^{d^m}W_{i, n, m}.$$
    Now by the bounds on $Z_{n,m}$, if $|Y_n - \EE Y_n| \geq s$, then
    $$\left|\sum_{i=1}^{d^m}W_{i, n, m} - \EE\left(\sum_{i=1}^{d^m}W_{i, n, m}\right)\right|\geq s - \frac{\log 2n}{\sqrt n}.$$
    Thus bounding the probability of this event from above bounds the probability in question as well. This is where we specify $m_0$: if $m_0$ is large enough, then for $n>d^{2m_0}$,
    $$s - \frac{\log 2n}{\sqrt n}\geq \frac{s}{2},$$
    making it sufficient to bound
    $$\PP\left(\left|\sum_{i=1}^{d^m}W_{i, n, m} - \EE\left(\sum_{i=1}^{d^m}W_{i, n, m}\right)\right|\geq s/2\right).$$
    We conclude by applying Hoeffding's inequality to the i.i.d. random variables $W_{i, n, m}$, to directly obtain the statement of the claim. 
    \end{proof} 
    
    \begin{proof}[Proof of Claim \ref{claim:conv_of_means}]
    Consider \eqref{eq:entropy_decomp} for $m=1$ and take expectation. Due to $\nu_i$ and $\mu$ being identically distributed, we find
    $$\EE X_n = \EE Z_{n, 1} + d \sum_{k=0}^{n}\binom{n}{k}d^{-k}(1-d^{-1})^{n-k} \EE X_k.$$
    For $n$ large, $0 \leq Z_{n, 1} \leq 2d \log n$. Hence
    writing $y_n=\EE X_n$, and rearranging, this means
    \begin{equation}\label{eq:rearranged}
    y_n = \rho_n + \frac{d^n}{d^{n-1}-1}\sum_{k=0}^{n-1}\binom{n}{k}d^{-k}(1-d^{-1})^{n-k} y_k=\rho_n + \sum_{k=0}^{n-1}\frac{\binom{n}{k}}{d^{n-1}-1}(d-1)^{n-k}y_k,
    \end{equation}
    where $0\leq \rho_n:=\frac{d^{n-1}}{d^{n-1}-1}\EE Z_{n, 1} \leq 2d^2 \log n$. This is the $\rho_n$ being present in the statement, representing the extra entropy coming from representing $\mu^{(n)}$ as a mixture.
    
    Note $y_0=y_1=0$. We are interested in the asymptotics of $y_n/n$. Observe that $y_2=\rho_2$, and unless $F$ is a Dirac measure on $(1, 2, \dots d)$ or $(d, d-1, \dots, 1)$, $H_2(\mu)>0$ almost surely as both 12 and 21 are present in $\mu$ with positive density. This implies that once we prove that there is a log-periodic limit $A$ is of the form proposed by the theorem, then $\int_{0}^{1}A(x)>0$ apart from these degenerate cases, as the coefficient for $r=0$ is positive.

    Notice that specifying $(\rho_n)$ determines $(y_n)$, using the recursion we can explicitly express the contribution of $\rho_l$ to $y_n$ for any $l, n$. Indeed,
    \begin{equation}\label{eq:recursion_y}
    y_n = \sum_{l=2}^{n}\alpha_l(n)\rho_l,
    \end{equation}
    where $\alpha_l(n)=0$ for $n<l$, $\alpha_l(l)=1$, and for $n>l$,
    \begin{equation}\label{eq:recursion_alpha}
        \alpha_l(n)=\sum_{k=0}^{n-1}\frac{\binom{n}{k}}{d^{n-1}-1}(d-1)^{n-k}\alpha_l(k),
    \end{equation}
    that is the coefficient $\alpha_l(n)$ satisfies the same recurrence relation as $y_n$, without the error term. The terms $k=0, \dots, l-1$ vanish, but it will turn out to be useful to write the right hand side this way. The ideal way to think about the coefficient $\alpha_l(n)$ is the following: we independently, uniform at random label the numbers in $[n]$ by $1, \dots, d$, conditioned on not being identically labelled. This labelling corresponds to dropping $n$ points according to  $\mu$ into $[0, 1]^2$ and considering the landing $d^{-1}$-squares. The condition is in accordance with the rearrangement in \eqref{eq:rearranged}. Then we separate label classes and do the same independently to get a second label, and continue this procedure until we get singletons. It is easy to see that $\alpha_l(n)$ is the expected number of $l$-sized label classes appearing in this process, it follows from the recursion on $\alpha$. But
    $$\EE(\text{no. of $l$-sized classes})=\frac{1}{l}\sum_{i=1}^n\PP(\text{$i$ appears in an $l$-sized class})=\frac{n}{l}\PP(\text{1 appears in an $l$-sized class}),$$
    Denote this probability by $\beta_l(n)=\frac{l}{n} \alpha_l(n)$. The size of the class of 1 evolves according to the following process: we start with $X_0 = n$, and given $X_p$, let $X_{p+1}$ have probability mass function $$\PP(X_{p+1}=k)= \frac{\binom{n-1}{k-1}}{d^{n-1}}(d-1)^{n-k}, \\ k=1, \dots, n,$$
    that is $X_{p+1}\sim 1+\mathrm{Binom}(X_p-1, 1/d)$.
    Then
    $$\beta_l(n)=\PP(X_0=n,\exists p\text{ such that }X_p=l).$$
    Shifting the indexing by one, we see the simple binomial decay:
    writing $Y_p = X_p -1$, we simply have $Y_{p+1} \sim \mathrm{Binom}(Y_p, 1/d)$, and with $\delta_l(n) = \gamma_{l+1}(n+1)$, 
    $$\delta_l(n)=\PP(Y_0=n,\exists p\text{ such that }Y_p=l),$$
    which coincides with the definition of $\delta_l(n)$ in Theorem \ref{thm:self_averaging_asymptotics}.

    Rewriting the formula \eqref{eq:recursion_y} in terms of the $\delta$s, and shifting the indexing by one for convenience, we want to understand the asymptotics of
    $$z_{n} = \frac{y_{n+1}}{n+1}=\sum_{l=1}^{n}\delta_l(n)\frac{\rho_{l+1}}{l+1}.$$
    For given $l$, $\delta_l(n)$ is asymptotically log-periodic in base $d$ with limit
    $$\frac{1-q^l}{l! |\log q|}\sum_{r=-\infty}^{\infty}\Gamma\left(l+\frac{2\pi i r}{\log q}\right)\exp (2\pi i r x)$$
    for $q=1/d$. We claim that $z_n$ is asymptotically log-periodic as well due to Lemma \ref{lemma:sum_log_periodic}, to which end the uniform convergence of the series
    $$\sum_{l=1}^{n}\delta_l(n)\frac{\rho_{l+1}}{l+1} = \sum_{l=1}^{\infty}\delta_l(n)\frac{\rho_{l+1}}{l+1}$$
    has to be checked. Divide the range $[1, \infty)$ into intervals of the form $[l, [\sqrt d l]]$, denote these by $I_1, I_2, \dots$. Over such an interval, for some $C>0$
    $$\sum_{l\in I}\delta_l(n)\frac{\rho_{l+1}}{l+1}\leq \sum_{l\in I}\frac{\delta_l(n)}{\min I}\cdot 2D^2 \log(\max I)\leq C\frac{\log(\max I)}{\min I},$$
    yielding a summable series.  (The second inequality follows from the fact that summing $\delta_l(n)$ for $l$ in $I$ corresponds to the expected number of visits of the simple binomial decay in $I$, which is bounded by the choice of $I$.) Thus we find $\sum_{l=l_1}^{l_2}\delta_l(n)\frac{\rho_{l+1}}{l+1}=o(l_1)$, the series $\sum_{l=1}^{\infty}\delta_l(n)\frac{\rho_{l+1}}{l+1}$ is uniformly Cauchy, and $z_n$ is asymptotically log-periodic indeed. The log-periodic limit can be expressed as the sum of the individual log-periodic limits, in which expression the order of the summations can be interchanged due to absolute convergence to obtain the proposed Fourier expansion.  
\end{proof}

    As we have already discussed, Claims \ref{claim:conv_of_means}-\ref{claim:large_dev_principle} conclude the proof of Theorem \ref{thm:random_automorphism}
\end{proof}

The main question now is whether $\EE (H_n(\mu) / n)$ is actually convergent, i.e., whether the log-periodic limit is constant. This happens if and only if all Fourier coefficients for $r\neq 0$ vanish:

\begin{corollary}
    In the setup of Theorem \ref{thm:random_automorphism}, $H_n(\mu)/n$ converges almost surely if and only if 
    $$\sum_{l=1}^{\infty}\rho_{l+1}\frac{1-q^l}{(l+1)! |\log q|}\Gamma\left(l+\frac{2\pi i r}{\log q}\right)$$
    vanishes for all $r\neq 0$.
\end{corollary}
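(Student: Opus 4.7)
The plan is to deduce the corollary essentially by inspection from Theorem \ref{thm:random_automorphism}, combining two elementary facts: (i) a sequence is convergent if and only if its asymptotic log-periodic limit (assumed to exist) is a constant function, and (ii) a continuous periodic function whose Fourier series converges absolutely is constant if and only if all of its Fourier coefficients away from zero vanish. Since Theorem \ref{thm:random_automorphism} already provides the explicit Fourier expansion \eqref{eq:aut_thm_statement} of the almost sure log-periodic limit $A$ of $H_n(\mu)/n$, the proof amounts to matching the Fourier coefficient of $A$ at a nonzero frequency $r$ with the expression displayed in the corollary.

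For (i), I would argue directly from Definition \ref{def:log_periodic}. Set $a(n)=H_n(\mu)/n$ and $A^{(m)}(x)=a([d^{x+m}])$. If $a(n)\to c$, then for every fixed $x$ the subsequence $a([d^{x+m}])$ tends to $c$ as $m\to\infty$, forcing $A(x)\equiv c$. Conversely, if $A\equiv c$, then the locally uniform convergence $A^{(m)}\to c$ on $[0,1]$, combined with the observation that every sufficiently large integer arises as $[d^{x+m}]$ for some $m$ and some $x\in[0,1]$, yields $a(n)\to c$. For (ii), Theorem \ref{thm:random_automorphism} gives $A$ as a smooth function on $\mathbb{R}/\mathbb{Z}$ with an absolutely convergent Fourier series, so standard uniqueness of Fourier coefficients applies. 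The $r$-th Fourier coefficient of $A$ can be read off directly from \eqref{eq:aut_thm_statement}; substituting $q=1/d$ (whence $1-d^{-l}=1-q^{l}$, $\log d=|\log q|$, and $l-\frac{2\pi i r}{\log d}=l+\frac{2\pi i r}{\log q}$) converts this coefficient precisely into the expression
\begin{equation*}
\sum_{l=1}^{\infty}\rho_{l+1}\frac{1-q^l}{(l+1)!\,|\log q|}\Gamma\!\left(l+\frac{2\pi i r}{\log q}\right)
\end{equation*}
featured in the corollary.

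Finally, to pass from the deterministic analytic equivalence to the almost sure statement, I would emphasize that the log-periodic limit $A$ delivered by Theorem \ref{thm:random_automorphism} is a deterministic function independent of the realization, whereas asymptotic log-periodicity itself holds almost surely. Consequently, the almost sure convergence of $H_n(\mu)/n$ is equivalent to the purely analytic condition that $A$ is constant, which by (i) and (ii) is equivalent to the vanishing of all $r\neq 0$ Fourier coefficients. There is essentially no obstacle: the only mild subtlety is the elementary equivalence in (i) for sequences indexed via the floor function $[d^{x+m}]$, and this is routine because $d>1$ guarantees that consecutive windows $[d^{m},d^{m+1}]$ tile the integers from some point on.
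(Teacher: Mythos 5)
Your proposal is correct and is essentially the argument the paper leaves implicit: the corollary follows by inspection from Theorem~\ref{thm:random_automorphism} once one notes that (a) a sequence is convergent iff its asymptotic log-periodic limit is constant, and (b) a smooth periodic function is constant iff its nonzero Fourier coefficients vanish, with the coefficient of $\exp(2\pi i r x)$ in~\eqref{eq:aut_thm_statement} becoming the displayed expression under the substitution $q=1/d$ (so that $1-d^{-l}=1-q^l$, $\log d = |\log q|$, and $l-\tfrac{2\pi i r}{\log d}=l+\tfrac{2\pi i r}{\log q}$). Your extra care in (i) about the floor function $[d^{x+m}]$ covering all large integers, and your remark that $A$ being deterministic converts the almost sure claim into a purely analytic one, are the right things to flag; the paper takes them for granted.
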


Being aware of this, the hypothetic convergence seem to be miraculous event, determined by the coefficients $\rho_l$: for any fixed $l$ we know that the corresponding term does not vanish, thus for the generic choice of coefficients the series does not sum to 0 either. Of course, in our case $\rho_l$ is a well-defined sequence of coefficients for which this miraculous event might occur.

\section{Perturbed permutons from random automorphisms of the $d$-ary tree} \label{sec:random2}

 Our goal is to prove Theorem \ref{thm:random_automorphism2}. We will deploy the same strategy as in the proof of Theorem \ref{thm:random_automorphism}: we study the asymptotic of the means -- actually proving convergence this time -- and prove concentration. The second of these steps is going to be more challenging compared to the previous scenario due to writing up the relevant random variables as independent sums to invoke concentration inequalities will be possible only after careful conditioning.
 
 We dedicate a subsection to both these steps due to their length. We will use notation introduced in Section \ref{subsection:random_aut_notation}.

Similarly to the previous theorem, the multiple sources of randomness causes some conceptual difficulties. Now we have random node labellings and random edge weights defining $\mu$, and study the entropy defined in terms of randomly sampling from $\mu$. We take note of the fact that the distribution from which $\mu$ is drawn naturally disintegrates based on the choice of the edge weights up to level $m$ to $\tau|_m$. We will denote by $\mu$ the original random permuton, and by $\mu^{t|_m}$ the random permuton arising from the edge weighting of $T_d$ in which weights up to level $m$ are not random but fixed to be $t|_m$. Then for any event $A$ defined in terms of the drawn measure and the location of the sampling points, by Fubini's theorem
\begin{equation} \label{eq:disintegration}
\PP_{\mu}(A)=\EE_{\tau|_m}(\PP_{\mu^{t|_m}}(A)),
\end{equation}
where $\PP_{\mu}(A)$ and $\PP_{\mu^{t|_m}}(A)$ denotes the probability of $A$ upon sampling our random permuton according to the appropriate measure, and $\EE_{\tau|_m}$ stands for $t|_m$ being distributed according to $\tau|_m$.

\subsection{The convergence of the means}

\begin{lemma}\label{lemma:random_aut_conv_means}
    For $\mu=\mu_{F,\Unif}$, $\EE H_n(\mu)/n$ converges.
\end{lemma}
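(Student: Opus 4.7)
The plan is to parallel Claim \ref{claim:conv_of_means} from Section \ref{sec:random}, exploiting the extra Dirichlet randomness to shift the self-averaging regime from log-periodic to convergent. I would apply Lemma \ref{lemma:geom_sep} at the top level: $\mu=\mu_{F,\Unif}$ decomposes as a convex combination of the $d$ pairwise geometrically separated normalised restrictions $\mu_1,\dots,\mu_d$ to the first-level subsquares, with coefficients given by the top-level edge weights $\tau_1,\dots,\tau_d$. By the recursive structure of $\mu_{F,\Unif}$, each $\mu_i$ is distributed as $\mu$; the weights $\tau_1,\dots,\tau_d$ are the spacings of $d-1$ i.i.d.\ uniform points, so each $\tau_i$ has $\mathrm{Beta}(1,d-1)$ marginal distribution, and $\tau_i$ is independent of $\mu_i$ (which depends only on labels and weights strictly below node $i$). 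Taking expectations in
$$H_n(\mu)=\omega_{n,d}+\sum_{i=1}^d\sum_{k=0}^n\binom{n}{k}\tau_i^k(1-\tau_i)^{n-k}H_k(\mu_i)$$
and setting $y_n=\EE H_n(\mu)$ yields the self-averaging recursion
$$y_n=\varepsilon_n+d\sum_{k=0}^n\binom{n}{k}\,\EE_\tau\!\left[\tau^k(1-\tau)^{n-k}\right]y_k=\varepsilon_n+d\,\EE\,y_K,$$
with $\varepsilon_n=O(\log n)$, $\tau\sim\mathrm{Beta}(1,d-1)$ and $K\mid\tau\sim\Binom(n,\tau)$.

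The crucial structural point is that $K/n$ converges in distribution to $\tau$, whose law is continuous with full support on $(0,1)$ and has nonzero variance; in particular $\Var K=\Theta(n^2)$. This is exactly the contrast with Section \ref{sec:random}: there the analogue of $K$ was $\Binom(n,1/d)$, with variance $\Theta(n)$, and the near-deterministic concentration around $n/d$ produced the log-periodic oscillation of $y_n/n$; here the sampling spreads over all of $(0,1)$, and one expects $y_n/n$ to converge.

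To convert this into a proof that $a_n:=y_n/n$ converges, I would first note that $a_n$ is uniformly bounded: part (4) of Lemma \ref{lemma:automorphism_finite_lin_entropy} only uses the combinatorial base-square structure, which is identical for $\mu_F$ and $\mu_{F,\Unif}$, so the pattern-avoidance argument and Marcus--Tardos deliver $H_n(\mu_{F,\Unif})\leq K n$ almost surely. Dividing the recursion by $n$ and using $d\,\EE K=n$ rewrites it as
$$a_n=\varepsilon_n/n+\sum_{k=1}^np_{n,k}a_k,\qquad p_{n,k}=d\,\tfrac{k}{n}\binom{n}{k}\EE_\tau\!\left[\tau^k(1-\tau)^{n-k}\right],$$
so that $(p_{n,k})_k$ is an honest probability kernel whose associated random variable $K/n$ converges in distribution to the $\mathrm{Beta}(2,d-1)$ law, again supported on all of $(0,1)$ with continuous density. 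This lands squarely in the convergent regime of the self-averaging theory of \cite{self-averaging}, whose results would then give $a_n\to c$ for some finite $c\geq 0$.

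The main obstacle is this final self-averaging step, for which two routes seem natural. The cleanest is to match our beta-binomial kernel against a general convergence theorem in \cite{self-averaging}. A more hands-on alternative exploits the quasimonotonicity bound $|a_n-a_{n-1}|=O(\log n/n)$ from Lemma \ref{lemma:entropy_quasi_mon}: combined with boundedness, a hypothetical gap $\limsup a_n-\liminf a_n>0$ would force long consecutive runs on which $a_k$ sits near each extremum; but since $p_{n,\cdot}$ asymptotically places a uniformly positive proportion of its mass throughout $(0,1)$, the weighted average defining $a_n$ could not simultaneously be near either extremum on such a run, yielding the desired contradiction and hence convergence.
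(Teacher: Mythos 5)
Your setup and structural analysis are correct and match the paper exactly: the top-level decomposition via Lemma \ref{lemma:geom_sep}, the self-averaging recursion $y_n = \varepsilon_n + d\,\EE[y_K]$ with $K$ beta-binomial, the reduction to a probability kernel $p_{n,k}$ whose rescaled limit is $\mathrm{Beta}(2,d-1)$, and the key heuristic that the spread of this kernel (as opposed to the near-deterministic $\Binom(n,1/d)$ from Theorem \ref{thm:random_automorphism}) should force convergence rather than log-periodicity. However, your two proposed routes to actually close the argument both have real gaps.

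Route (a) invokes \cite{self-averaging} as a black box. The paper cites that work only as a survey of the landscape; it does not supply a convergence theorem that applies off the shelf here, and the authors evidently had to prove convergence directly. Unless you can point to a specific theorem whose hypotheses you verify for the beta-binomial kernel, this is not a proof.

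Route (b) has a quantitative flaw. Quasimonotonicity gives $|a_n - a_{n-1}| = O(\log n / n)$, but summing this bound over a macroscopic window $[cn,n]$ produces $O(\log n\cdot\log(1/c))$, which is \emph{unbounded}: the ``runs'' on which $a_k$ must stay within $\delta$ of $a_n$ have length only $\Theta(n\delta/\log n) = o(n)$. So the picture of ``long consecutive runs near each extremum'' is not justified; the remark in the paper after Definition \ref{def:log_periodic} explicitly says the increment bound is compatible with oscillations on the scale of one base-$d$ period, which is exactly macroscopic. The only structural fact you retain is that most of the $p_n$-mass must sit on $k$ with $a_k\approx\limsup a_n$ whenever $a_n\approx\limsup a_n$, but converting this into a contradiction would require iterating the recursion down the scales and controlling errors across many levels, and it is not clear this can be done from boundedness and the increment bound alone. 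Note that the paper could not resolve the analogous convergence question even for $\mu_F$ by soft arguments.

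The paper's actual proof is a direct computation. It solves for the influence coefficients $\alpha_l(n)$ in $y_n = \sum_{l}\alpha_l(n)\rho_l$, applies $d-1$ discrete derivatives to the recursion \eqref{eq:recursion2_alpha_d>2} to obtain an order-$(d-1)$ linear recurrence, finds the $d-1$ hypergeometric solutions, and shows (Claim \ref{claim:falling_factorial_roots}) that the characteristic equation $(s)_{d-1}=d!$ has $s=d$ as a root with all other roots of strictly smaller real part, whence $\alpha_l(n)/n$ converges for each $l$. It then uses the ``simple partition decay'' stochastic interpretation of $\alpha_l(n)$ to show the tail of $\sum_l\frac{\alpha_l(n)}{n}\rho_l$ is uniformly small, so the full series converges. (For $d=2$ there is a slick shortcut: $\alpha_l(n)=n/l$ exactly, making $y_n/n$ monotone and bounded.) None of this appears in your sketch, and it is precisely what replaces the missing final step.
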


\begin{proof}
    Recall that on the first level, $(\tau_i)_{i=1}^{d}$ are the random side lengths of the squares in the first approximation of $\mu$, being gap sizes in order statistics obtained from $U_1, \dots, U_{d-1}\sim \Unif(0, 1)$. Denote the resulting intervals of the partition by $I_1, \dots, I_d$. Fixing a first-level choice $(\tau_i)_{i=1}^{d} = (t_i)_{i=1}^{d}$, we can write in the spirit of \eqref{eq:entropy_decomp}
    \begin{equation}\label{eq:entropy_decomp2}
    H_n(\mu^{t|_1}) = Z_{n, 1}(\mu^{t|_1}) + \sum_{i=1}^{d} \sum_{k_i=0}^{n}\binom{n}{k_i} t_i^{k_i}(1-t_i)^{n-k_i}H_{k_i}(\nu_i).
    \end{equation}
    Moreover, $0\leq Z_{n, 1}(\mu) \leq 2d\log n$ for $n$ large.
    Note that here
    $$\binom{n}{k_i} t_i^{k_i}(1-t_i)^{n-k_i}=\PP(\text{upon sampling $n$ points from $\mu$, there are $k_i$ points with $x$-coordinate in $I_i$}).$$
    Taking expectation with respect to the random node labelling and nonfixed edge weights, we find due to $\nu_i$ and $\mu$ being equidistributed that
    \begin{equation}\label{eq:entropy_decomp2_expectation}
    \EE[H_n(\mu^{t|_1})]=\EE [Z_{n, 1}(\mu^{t|_1})] + \sum_{i=1}^{d}\sum_{k_i=0}^{n}\PP(\text{there are $k_i$ points in $I_i$})\cdot\EE[H_{k_i}(\mu)]
    \end{equation}
    Taking expectation with respect to the choice of $(t_i)$, due to the gap sizes being equidistributed and having the natural disintegration described in \eqref{eq:disintegration},
    $$\EE [H_n(\mu)] = \EE [Z_{n, 1}(\mu)] + d\sum_{k=0}^{n}\PP(\text{there are $k$ points in $[0, U_{(1)}]$})\cdot\EE [H_k(\mu)].$$
    Note that the distribution on the length $d$ order-dependent partitions of $n$ induced by the partition given by $U_{(0)}, \dots, U_{(d)}$ is uniform, thus we can easily express 
    \begin{equation}\label{eq:point_distribution}
    \PP(\text{there are $k$ points in $[0, U_{(1)}]$})=\frac{\binom{n-k+d-2}{d-2}}{\binom{n+d-1}{d-1}}
    \end{equation}
    by enumerating the corresponding combinations with repetition. 
    
    We check $d=2$ separately, as this case is deceptively simple: \eqref{eq:point_distribution} is just a uniform distribution on $\{0, 1, \dots, n\}$, hence
    \begin{equation}\label{eq:recursion_d=2}
    \EE [H_n(\mu)] = \EE [Z_{n, 1}(\mu)] + \frac{2}{n+1}\sum_{k=0}^{n}\EE [H_k(\mu)],
    \end{equation}
    or after rearrangement and writing $y_n=\EE [H_n(\mu)]$,
    $$y_n = \rho_n + \frac{2}{n-1}\sum_{k=0}^{n}y_k,$$
    where $0\leq \rho_n = \frac{n+1}{n-1}\EE [Z_{n, 1}(\mu)]\leq 4d\log n$. (Note that we have the same upper bound for any $d$ if $n$ is large enough.)
    
    Now in the same fashion as in proceeding from \eqref{eq:rearranged} to \eqref{eq:recursion_alpha} in the proof of Theorem \ref{thm:random_automorphism}, this implies
    $$y_n = \sum_{l=2}^{n}\alpha_l(n)\rho_l,$$
    where $\alpha_l(n)=0$ for $n<l$, $\alpha_l(l)=1$, and for $n>l$,
    $$\alpha_l(n)=\frac{2}{n-1}\sum_{k=0}^{n-1}\alpha_l(k).$$
    Applying this recurrence,
    $$\alpha_l(n+1)= \frac{2}{n}\alpha_l(n)+\frac{2}{n}\sum_{k=0}^{n-1}\alpha_l(k)=\frac{n-1}{n}\alpha_l(n)+\frac{2}{n}\alpha_l(n)=\frac{n+1}{n}\alpha_l(n),$$
    thus $\alpha_l(n)/n$ is constant for $n=l, l+1, \dots$. By plugging in $n=l$, we see that its value is $\frac{1}{l}$, yielding
    $$y_n/n = \sum_{l=2}^{n}\rho_l/l.$$
    It implies that $y_n/n$ is a nondecreasing sequence, whose limit is finite due to 5. in Lemma \ref{lemma:automorphism_finite_lin_entropy}. This concludes the proof for $d=2$.

    The rest of the proof is devoted to $d>2$. By \eqref{eq:point_distribution}, we find that $\alpha_l(n)$ satisfies the following recursion for $n>l$:
    \begin{equation}\label{eq:recursion_alpha_d>2}
    \alpha_l(n)= d\sum_{k=0}^{n}\frac{\binom{n-k+d-2}{d-2}}{\binom{n+d-1}{d-1}}\alpha_l(k),
    \end{equation}
    or after rearrangement,
    \begin{equation}\label{eq:recursion2_alpha_d>2}
    (n+d-1)_{d-1}\alpha_l(n)= d(d-1)\sum_{k=0}^{n}(n-k+d-2)_{d-2}\alpha_l(k),
    \end{equation}    
    where $(x)_m=\prod_{i=0}^{m-1}(x-i)$ is the falling factorial. Recalling
    $$y_n = \sum_{l=2}^{n}\alpha_l(n)\rho_l,$$
    proving the convergence of each $\alpha_l(n)/n$ separately is a promising direction. We note that we can quickly observe that $\alpha_l(n)=n$ (and hence, by linearity, $\alpha_l(n)=Cn$ for any $C$) is a solution. Notably, then the identity in question is
    $$(n+d-1)_{d}= d(d-1)\sum_{k=0}^{n}(n-k+d-2)_{d-2}\cdot k.$$     
    Now the left hand side is the number of ways of choosing an ordered list of length $d$ consisting of distinct elements drawn from a collection of size $n+d-1$. The right hand side counts the same quantity, decomposed based on the location of the second entry: if the second entry is at place $k+1$, there are $(n-k+d-2)_{d-2}\cdot k$ to choose the rest. Thus $\alpha_l(n)=Cn$ is a solution indeed, which is promising for our goal that $\alpha_l(n)$ grows linearly. We need to check essentially that all solutions independent from $Cn$ are sublinear.

    Before dealing with this recurrence relation or difference equation, it is enlightening to study the naturally corresponding differential equation, where the set of solutions can be understood in simpler terms. Replacing the sum by an integral and using the approximation $n+o(n)\approx n$, we can rewrite \eqref{eq:recursion2_alpha_d>2} as
    $$x^{d-1}f(x)=d(d-1)\int_{0}^{x}(x-\xi)^{d-2}f(\xi)d\xi.$$
    Differentiating $d-1$ times with respect to $x$, we obtain a Cauchy--Euler equation of a very simple form:
    \begin{equation}\label{eq:cauchy_euler}
    \left(x^{d-1}f(x)\right)^{(d-1)}=d! \cdot f(x).
    \end{equation}
    This is a linear differential equation of order $d-1$, so there are $d-1$ linearly independent solutions. One should look for solutions of the form $f(x)=x^r$, which is a solution if and only if
    $$\prod_{i=1}^{d-1}(r+i)=d!,$$
    or after writing $s=r+d-1$,
    $$(s)_{d-1}=d!$$
    Finding $d-1$ distinct roots for this polynomial equation gives a basis of the space of solutions of \eqref{eq:cauchy_euler}. If the solutions are not distinct, some extra care is needed, but we do not face this inconvenience. The following claim will have importance in the exact treatment of the recurrence relation as well:

    \begin{claim}\label{claim:falling_factorial_roots}
    The polynomial $p(s)=(s)_{d-1}-d!$ have $d-1$ distinct roots, one of them is $d$, and all other roots have real part smaller than $d$.
    \end{claim}

    \begin{proof}[Proof of Claim \ref{claim:falling_factorial_roots}]
        If $\Re s>d$ or $\Re s = d$ and $s\neq d$, then comparing the absolute values of the terms one-by-one in $(s)_{d-1}$ and $d!=(d)_{d-1}$ shows that $|(s)_{d-1}|>|d!|$. To check that the roots are distinct, observe that if $s\in (-1, d)$, then by the same one-by-one comparison of absolute values $|(s)_{d-1}|<|d!|$. Thus any real root of $p(s)$ is outside of the interval $(-1, d)$. On the other hand, the $d-1$ roots of $q(s)=(s)_{d-1}$ are $s=0, \dots, d-2$, and then by the Gauss--Lucas theorem, all roots of $q'=p'$ are in the interval $[0, d-2]$. Thus there are no common roots of $p, p'$, yielding the existence of $d-1$ distinct roots.
    \end{proof}

    By the previous claim, $\prod_{i=1}^{d-1}(r+i)=d!$ is solved by $r=1$ and all other roots have real part smaller than 1, i.e., any solution of \eqref{eq:cauchy_euler} is of the form $f(x)=Cx+o(1)$. This is the type of statement we want to achieve for the solution of the original recurrence.

    Having made these preliminary observations, we highlight the claim we have already formulated and which we will prove now rigorously:
    
    \begin{claim} \label{claim:alpha_l_n_conv}
    $\alpha_l(n)/n$ is convergent for any $l$. 
    \end{claim}

    \begin{proof}[Proof of Claim \ref{claim:alpha_l_n_conv}]
    Motivated by the way we solved the continuous version, recall the recurrence relation \eqref{eq:recursion_alpha_d>2}
    \begin{equation}\label{eq:recursion2_alpha_d>2_again}
    (n+d-1)_{d-1}\alpha_l(n)= d(d-1)\sum_{k=0}^{n}(n-k+d-2)_{d-2}\alpha_l(k),
    \end{equation}
    and apply discrete differentiation $d-1$ times to get rid of the "integral". Discrete differentiation of a sequence $a(n)$ just accounts for taking $\Delta a(n)=a(n+1)-a(n)$, or after multiple steps, by induction,
    $$\Delta^{(m)}a(n)= \sum_{i=0}^{m}(-1)^{m-i} \binom{m}{i}a(n+i).$$
    Applying this with $m=d-1$ to the right hand side of \eqref{eq:recursion2_alpha_d>2_again}, by definition
    \begin{equation}\label{eq:discrete_diff}
    \Delta^{(d-1)}\left(d(d-1)\sum_{k=0}^{n}(n-k+d-2)_{d-2}\alpha_l(k)\right) =d(d-1)\sum_{i=0}^{d-1}(-1)^{d-1-i} \binom{d-1}{i}\sum_{k=0}^{n+i}(n+i-k+d-2)_{d-2}\alpha_l(k).
    \end{equation}
    Observe that for any $i$, the second sum can be extended to run until $n+d-2$, as plugging in $k=n+i+1, \dots, n+d-2$ vanishes in the falling factorial. Making this extension, decomposing the first sum to $i=d-1$ and $i$ running from $0$ to $d-2$, and exchanging the sums, we see that \eqref{eq:discrete_diff} further equals
    $$d(d-1)\left((d-2)_{d-2}\cdot \alpha_l(n+d-1)+\sum_{k=0}^{n+d-2}\alpha_l(k)\sum_{i=0}^{d-1}(-1)^{d-1-i}\binom{d-1}{i}(n+i-k+d-2)_{d-2}\right).$$
    Observe that in this sum, the coefficient of $\alpha_l(k)$ is just the $(d-1)$th discrete derivative of the degree $d-2$ polynomial $(n-k+d-2)_{d-2}$, up to sign, hence it vanishes. Thus what remains is that the $(d-1)$th discrete derivative of the right hand side of \eqref{eq:recursion2_alpha_d>2_again} is just $d!\cdot\alpha_l(n+d-1)$. Simply plugging the definition of $\Delta$ into the left hand side, we see that taking these derivatives yields the following recurrence of order $d-1$:
    \begin{equation}\label{eq:final_recurrence}
    \sum_{i=0}^{d-1}(-1)^{d-1-i} \binom{d-1}{i}(n+i+d-1)_{d-1}\alpha_l(n+i)=d!\cdot\alpha_l(n+d-1).
    \end{equation}
    Finding $d-1$ linearly independent solutions solves this recurrence. The solutions naturally associated with the polynomial solutions of the corresponding differential equation are the hypergeometric solutions, i.e., we look for solutions of the form $\alpha_l(N+1)/\alpha_l(N)=S(N)$, where $S(N)$ is some rational function. We will be even more restrictive with our choice motivated by what we found in the continuous case, we will look for $S(n)=\frac{N+1}{N+x}$. 
    Then we have 
    $$\alpha_l(n+i)=\alpha_l(n)\cdot \frac{(n+i)_i}{(n+x+i-1)_i}.$$
    Plugging this into \eqref{eq:final_recurrence}, and multiplying by $(n+x+d-2)_{d-1}$, we can simplify by the common factors $(n+d-1)_{d-1}\cdot \alpha_l(n)$ to get a polynomial equation in $n$ and $x$, having degree $d-1$ in both variables:
    \begin{equation}\label{eq:recurrence_x_form}
    \sum_{i=0}^{d-1}(-1)^{d-1-i} \binom{d-1}{i}(n+i+d-1)_{i}\cdot (n+x+d-2)_{d-1-i}=d!.
    \end{equation}    
    Denote the polynomial on the left hand side by $P(x, n)$. The vital observation is that $P(x, n)$ is actually independent of $n$. As we already now that $\alpha_l(n)=n$ solves the recurrence, this $n$-independence holds for $x=0$. Moreover, note that upon plugging in $x=1,2 , \dots, d-1$ into the left hand side, we get
    $$\prod_{j=0}^{x-1}(n+d+x)\sum_{i=0}^{d-1}(-1)^{d-1-i}\binom{d-1}{i}(n+i+d-1)_{d-x}=\prod_{j=0}^{x-1}(n+d+x) \Delta^{d-1}(n+i+d-1)_{d-x}.$$
    This is the product of a degree $x-1$ polynomial and the $(d-1)$th derivative of a degree $d-x$ polynomial, which is zero unless $x=1$, when it is still constant. Thus $P(x, n)$ is independent of $n$ on each of the lines $x=0, 1, \dots, d$. Thus the coefficient of any positive power of $n$ vanishes for $x=0, 1, \dots, d$. But these coefficients are degree $d-1$ polynomials of $x$, hence it is possible only if they are identically zero. Thus $P(x, n)$ is independent of $n$ indeed, and we have already noted that $x=2, \dots, d-1$ are its roots. Recalling \eqref{eq:recurrence_x_form}, we can see that the leading coefficient is $(-1)^{d-1}$, thus 
    \begin{equation}\label{eq:x}
    P(x, n)=(-1)^{d-1}(x-2)_{d-1}=d!,
    \end{equation}
    or plugging in $s=-(x-d)$,
    $$(s)_{d-1}=d!.$$
    We understood this equation in Claim \ref{claim:falling_factorial_roots}: it has $d-1$ distinct roots, including $d$, and all other roots have real part smaller than $d$. As $d-s=x$, this yields that \eqref{eq:x} has $d-1$ distinct roots, including 0, and all other roots have real part larger than 0. Denoting these roots by $x_1, \dots, x_{d-1}$, we have obtained that
    $$\alpha_l^{(i)}(n) = \frac{(n)_{n-l}}{(n+x_i-1)_{n-l}}\alpha_l(l)=\frac{(n)_{n-l}}{(n+x_i-1)_{n-l}}$$
    is a solution for any $i$, $n\geq l$. It is a standard exercise to check that these $d-1$ solutions are linearly independent, thus they span the space of all solutions, and apart from $x_1=1$, all these are sublinear. Thus all solutions are of the form
    $$\alpha_l(n)=\sum_{i=1}^{d-1}c_i\alpha_l^{(i)}(n) =c_1n+o(n).$$
    Thus $\alpha_l(n)/n$ is convergent for any $l$.
    
    These roots yield the $d-1$ linearly independent solutions of \eqref{eq:recursion2_alpha_d>2_again}. 
    \end{proof}

    Now we can conclude the proof of Lemma \ref{lemma:random_aut_conv_means}. Recall that we need that
    \begin{equation}\label{eq:y_n_from_rho}
    y_n/n =\sum_{l=2}^{n}\frac{\alpha_l(n)}{n}\rho_l
    \end{equation}
    is convergent as $n\to\infty$. Due to the convergence of the coefficients $\alpha_l(n)/n$, it suffices to prove that for any $\varepsilon>0$ we can find $l_\varepsilon$ such that for every $n$
    $$\sum_{l=l_\varepsilon}^{n}\frac{\alpha_l(n)}{n}\rho_l<\varepsilon.$$
    To prove this, we observe that $\alpha_l(n)$ has a similar stochastic meaning as in the proof of Theorem \ref{thm:random_automorphism}. In this case, we do the following random process: starting from $n$ stones labelled by $[n]$, we drop $d-1$ separating bars independently uniform at random (they might end up in the same locations and they might fall before or after all the stones). Then in the resulting intervals, we repeat the same process independently until every stone is the sole inhabitant of its interval. We call this the {\it simple partition decay}. Then $\alpha_l(n)$ is the expected number of length $l$ intervals. Here
    \begin{equation}\label{eq:alpha_stochastic_meaning}
    \EE(\text{no. of $l$-long intervals})=\frac{1}{l}\sum_{i=1}^n\PP(\text{$i$ appears in an $l$-long interval}).
    \end{equation}
    Note that in contrast to the other proof, this quantity does not equal
    $\frac{n}{l}\PP(\text{1 appears in an $l$-long interval})$, as the probability $\PP(\text{$i$ appears in an $l$-long interval})$ depends on $i$. (This can be easily seen via the example that the event "1 appears in an $l$-long interval" is contained by the event "2 appears in an $l$-long interval if $l>1$".) Thus we need that for large enough $l_\varepsilon$
    $$\sum_{l=l_\varepsilon}^{n}\frac{1}{nl}\sum_{i=1}^n\PP(\text{$i$ appears in an $l$-long interval})\rho_l<\varepsilon.$$
    We partition the interval $[l_\varepsilon, n]$ into intervals $I_0, I_1, \dots, I_m$, where $I_j =[r^jl_\varepsilon, r^{j+1}l_\varepsilon]$ for some $r>1$. Summing over $l\in I_j$, by the bound on $\rho_l$, we get
    \begin{equation}\label{eq:prob_bound_partition_decay}
    \sum_{l\in I_j}\frac{1}{nl}\sum_{i=1}^n\PP(\text{$i$ appears in an $l$-long interval})\rho_l\leq \frac{4d \log \max I_j}{n \min I_j}\sum_{i=1}^{n}\sum_{l\in I_j}\PP(\text{$i$ appears in an $l$-long interval}).
    \end{equation}
    This sum of probabilities is the expected number of intervals encountered in the simple partition decay containing $i$ and having length in $I_j$, thus we must understand the evolution of this $i$-container length. We can observe that with probability exceeding 1/2, the length of the $i$-container is multiplied by at most $3/4$. (If $d=2$, and $i$ is in the middle of its container, this decay has precisely probability 1/2, and it is plain to see that otherwise it is even larger.) Thus if $r<4/3$, the number of $i$-containers with length in $I_j$ is stochastically dominated by $1+G$ for $G\sim \mathrm{Geo}(1/2)$. Hence the expected number of intervals encountered in the simple partition decay containing $i$ and having length in $I_j$ is at most $\EE(1+G)=3$. Hence \eqref{eq:prob_bound_partition_decay} can be bounded further by
    $$\frac{12d \log \max I_j}{\min I_j}$$
    for any $n$, and hence 
    $$\sum_{l=l_\varepsilon}^{n}\frac{1}{nl}\sum_{i=1}^n\PP(\text{$i$ appears in an $l$-long interval})\rho_l\leq \sum_{j=0}^{m}\frac{12d \log \max I_j}{\min I_j}=\frac{12d}{l_\varepsilon}\sum_{j=0}^{m}\frac{j+1}{r^j}.$$
    The convergence of this series assures that this quantity is indeed at most $\varepsilon$ for $l_{\varepsilon}$ large enough. This concludes the proof.
    \end{proof}

We study the sign of the limit in a separate lemma.

\begin{lemma}\label{lemma:positive_limit}
    For the limit $c=\lim_{n\to\infty}\EE H_n(\mu)/n\geq 0$ guaranteed to exist by Lemma \ref{lemma:random_aut_conv_means}, $c=0$ if and only if $F$ is a Dirac measure on $(1, 2, \dots, d)$ or $(d, d-1, \dots, 1)$.
\end{lemma}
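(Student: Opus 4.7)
The plan is to split into the two directions, with the forward direction being essentially immediate and the converse reducing to a short multiplicative subadditivity bound combined with a self-similar fixed-point argument. For the forward direction, if $F=\delta_{(1,2,\dots,d)}$ then every node of the infinite $d$-ary tree receives the identity label, so for any realization of the edge weights the permuton $\mu$ is almost surely supported on the diagonal and every sampled pattern is the identity; hence $H_n(\mu)=0$ for every $n$ and $c=0$. The case $F=\delta_{(d,d-1,\dots,1)}$ is identical using the antidiagonal.

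For the converse, I would assume $F$ is not Dirac on either monotone permutation and establish $c>0$ via two independent steps. The first is a multiplicative subadditivity bound
\[
\EE H_n(\mu) \geq \frac{n}{k}\,\EE H_k(\mu)
\]
valid for every $k\geq 2$ and every $n$ divisible by $k$. Given a realization of $\mu$, I partition the $n$ i.i.d.\ sample points into $n/k$ consecutive groups of size $k$. Conditionally on $\mu$ the sub-patterns $\pi_k^{(1)},\dots,\pi_k^{(n/k)}$ formed by the groups are i.i.d.\ with law $\mu^{(k)}$, while each is a deterministic function of the full pattern $\pi_n$, so by entropy monotonicity and independence $H_n(\mu)\geq \sum_i H(\pi_k^{(i)}\mid\mu) = (n/k)H_k(\mu)$. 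Taking expectation and invoking the convergence $\EE H_n(\mu)/n\to c$ from Lemma \ref{lemma:random_aut_conv_means} along the subsequence $n\in k\mathbb{Z}$ gives $c\geq \EE H_k(\mu)/k$ for every $k$.

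The second step shows $\EE H_2(\mu)>0$. Setting
\[
p_{\mathrm{inc}}=\PP(\mu\text{ is supported on the graph of a nondecreasing function})
\]
and $p_{\mathrm{dec}}$ analogously, the key observation is that $\mu$ lies on an increasing graph if and only if $\pi_1=(1,2,\dots,d)$ and each first-level sub-permuton $\nu_i$ lies on an increasing graph, since any descent in $\pi_1$ would force a drop in $y$-coordinate between neighbouring first-level squares. Using the independence of $\pi_1\sim F$ from the i.i.d.\ copies $\nu_1,\dots,\nu_d$ of $\mu$, this produces the fixed-point equations
\[
p_{\mathrm{inc}} = F\!\left((1,\dots,d)\right)\,p_{\mathrm{inc}}^d, \qquad p_{\mathrm{dec}} = F\!\left((d,\dots,1)\right)\,p_{\mathrm{dec}}^d.
\]
Under the hypothesis both leading coefficients are strictly less than $1$, and a quick analysis shows the only solutions in $[0,1]$ are $p_{\mathrm{inc}}=p_{\mathrm{dec}}=0$. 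Hence $\mu$ is almost surely not supported on a monotone graph, so $\mu^{(2)}$ is almost surely a non-degenerate distribution on $\Sym(2)$, yielding $\EE H_2(\mu)>0$. Combining the two steps at $k=2$ gives $c\geq \EE H_2(\mu)/2>0$.

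The main conceptual hurdle is the second step: although the fixed-point equations are intuitively clear, the careful point is verifying that the event ``$\mu$ is supported on a monotone graph'' is genuinely self-similar in the precise form required, i.e.\ that monotonicity of the overall curve pins both the root label and the monotonicity type of every child sub-permuton. The subadditivity of the first step is a routine manipulation of conditional entropies once the independent-groups decomposition is set up correctly.
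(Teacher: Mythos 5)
Your forward direction and the fixed-point analysis showing $\PP(\mu\text{ supported on a monotone graph})=0$ are both sound; the latter is a nice clean way to get $\EE H_2(\mu)>0$, and it genuinely differs from the paper, which establishes the same thing by noting directly that $12$ and $21$ appear with positive density and recording this as $\rho_2>0$.

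However, your Step 1 contains a genuine gap that invalidates the argument. You assert that the group sub-patterns $\pi_k^{(i)}$ are simultaneously (a) i.i.d.\ with law $\mu^{(k)}$ conditionally on $\mu$, and (b) deterministic functions of the full pattern $\pi_n$. These two properties cannot hold for the same grouping. If you partition the $n$ samples by \emph{sample index}, then the groups are indeed conditionally i.i.d.\ with law $\mu^{(k)}$, but the pattern $\pi_n$ is an exchangeable functional of the sample that has forgotten the sample indices entirely, so $\pi_k^{(i)}$ is \emph{not} a function of $\pi_n$ (the data-processing inequality you invoke does not apply). If instead you partition \emph{spatially} (the $k$ leftmost points, then the next $k$, etc.), then each $\pi_k^{(i)}$ is a restriction of $\pi_n$ and hence determined by it, but the groups are neither independent nor marginally distributed as $\mu^{(k)}$, because you have conditioned on them occupying particular ranges of the order statistics. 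Neither version delivers $H(\pi_n\mid\mu)\geq\sum_i H(\pi_k^{(i)}\mid\mu)$.

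Moreover, the inequality $\EE H_n(\mu)\geq(n/k)\EE H_k(\mu)$ is not merely unproved here: if it followed from the generic argument you describe, the same argument would apply verbatim to any pre-permuton, giving $H_n(\nu)\geq(n/k)H_k(\nu)$ for every $\nu$ and every $k\mid n$. But the paper's Theorem~\ref{thm:generic_lack_of_entropy} exhibits permutons with $\liminf_n H_n(\nu)/n=0$ while $H_k(\nu)>0$ for suitable $k$, directly contradicting such a bound, and the discussion preceding Lemma~\ref{lemma:entropy_quasi_mon} explicitly points out that this kind of superadditivity fails in general. A valid proof of the subadditivity-type bound would therefore have to use the specific self-similar structure of $\mu_{F,\Unif}$, which is exactly what the paper does: it works with the exact representation $\EE[H_n(\mu)]/n=\sum_{l\geq2}(\alpha_l(n)/n)\rho_l$ from Lemma~\ref{lemma:random_aut_conv_means}, shows $\rho_2>0$ by your (or an equivalent) nondegeneracy observation, and then proves $\liminf_n\alpha_2(n)/n>0$ by rewriting the recurrence \eqref{eq:recursion_alpha_d>2} for $\beta_l(n)=\alpha_l(n)/n$ as a convex combination of earlier terms and extracting a uniform positive lower bound. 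You would need a replacement for Step 1 along these structural lines.
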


\begin{proof}
    It is obvious that if $F$ is degenerate in the above sense, then the limit vanishes, as then $\mu_{F, \Unif}=\mu_{x\mapsto x}$ or $\mu_{F, \Unif}=\mu_{x\mapsto 1-x}$.
    
    Otherwise recalling \eqref{eq:y_n_from_rho}:
    $$\EE [H_n(\mu)]/n =\sum_{l=2}^{n}\frac{\alpha_l(n)}{n}\rho_l,$$
    it suffices to prove that $\rho_2>0$ and $\lim_{n\to\infty}\frac{\alpha_2(n)}{n}>0$. The first of these claims holds trivially, $\mu_{F, \Unif}$ contains both 12 and 21 with positive density almost surely. We will prove the second claim for any $l$, to this end recall \eqref{eq:recursion_alpha_d>2}:
    \begin{displaymath}
    \alpha_l(n)= d\sum_{k=0}^{n}\frac{\binom{n-k+d-2}{d-2}}{\binom{n+d-1}{d-1}}\alpha_l(k).
    \end{displaymath}
    Rearranging, we get
    $$\alpha_l(n)= d\frac{\binom{n+d-1}{d-1}}{\binom{n+d-1}{d-1}-d}\sum_{k=0}^{n-1}\frac{\binom{n-k+d-2}{d-2}}{\binom{n+d-1}{d-1}}\alpha_l(k),$$
    and for $\beta_l(m)=\alpha_l(m)/m$ (extended as zero to $m=0$),
    \begin{equation}\label{eq:beta}
    \beta_l(n)= \frac{d}{n}\frac{\binom{n+d-1}{d-1}}{\binom{n+d-1}{d-1}-d}\sum_{k=0}^{n-1}k\frac{\binom{n-k+d-2}{d-2}}{\binom{n+d-1}{d-1}}\beta_l(k).
    \end{equation}
    Using the combinatorial identity 
    $$\sum_{k=0}^{n}k \binom{n-k+d-2}{d-2}=\binom{n+d-1}{d},$$
    quick calculation shows that the right hand side of \eqref{eq:beta} is a convex combination of the terms  $\beta_l(k), \ k=0, 1, \dots, n-1$ for $n>l$. This yields that all $\beta_l(n)$ for $n>l$ are positive. (Recall that $\beta_l(n)=0$ for $n<l$, and $\beta_l(l)=1$.) Moreover, as the weights starting from $k=0$ are proportional to $k\cdot P(n-k)$ for a fixed polynomial $P$ of degree $d-2$, it is clear that we can find some $k_0$ fixed such that for $n$ large enough, the total weight put on $\beta_l(0), \dots, \beta_l(l-1)$ is at most the total weight put on $\beta_l(l), \dots, \beta_l(k_0)$. But this implies that 
    $$\beta_l(n)\geq \min \left\{\{\beta_l(k)/2: i\in[l, k_0]\}\cup \{\beta_l(i): i\in[k_0, n-1]\}\right\}>0,$$
    which implies a universal, positive lower bound on $\beta_l(n)$ for $n>l-1$. This verifies $c>0$.
\end{proof}

\subsection{Concentration around the mean}

The following lemma coupled with Lemma \ref{lemma:random_aut_conv_means} about the convergence of the means will immediately yield Theorem \ref{thm:random_automorphism2}:

\begin{lemma} \label{lemma:convergent_probability_sum}
    For every $s>0$, at most finitely many of the events $\{|H_n(\mu) - \EE [H_n(\mu)]| >2ns\}$ occurs.
\end{lemma}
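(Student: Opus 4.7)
The plan is to establish summable tail bounds $\PP(|H_n(\mu) - \EE H_n(\mu)| > 2ns)$ and conclude via Borel--Cantelli, following the overall scheme of Claim \ref{claim:large_dev_principle} from the proof of Theorem \ref{thm:random_automorphism}. The new twist is that the decomposition of $H_n(\mu)$ into a sum of independent random variables is only available after conditioning on the random edge weights up to a chosen level $m = m(n)$.

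Condition on $\tau|_m$. By Lemma \ref{lemma:geom_sep} applied to the restrictions $\nu_i$ of $\mu$ to the $d^m$ level-$m$ boxes, which are pairwise geometrically separated with random sidelengths $T_1, \ldots, T_{d^m}$,
$$H_n(\mu) = Z_{n,m} + \sum_{i=1}^{d^m} V_{i,n,m}, \quad V_{i,n,m} = \sum_{k=0}^{n}\binom{n}{k}T_i^k(1-T_i)^{n-k}H_k(\nu_i),$$
with $0 \leq Z_{n,m} \leq d^m\log(n+d^m)$, the $\nu_i$ conditionally i.i.d.\ copies of $\mu$, and $V_{i,n,m} \in [0, KnT_i]$ by Lemma \ref{lemma:automorphism_finite_lin_entropy}. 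Split $H_n(\mu) - \EE H_n(\mu) = A_{n,m} + B_{n,m}$ with $A_{n,m} = H_n(\mu) - \EE[H_n(\mu) \mid \tau|_m]$ (fluctuation inside the level-$m$ boxes) and $B_{n,m} = \EE[H_n(\mu) \mid \tau|_m] - \EE H_n(\mu)$ (fluctuation from the edge weights). For $A_{n,m}$, conditional Hoeffding on the independent sum yields
$$\PP(|A_{n,m}| > ns \mid \tau|_m) \leq 2\exp\left(-\frac{2s^2}{K^2 \sum_i T_i^2}\right)$$
once we absorb the $Z_{n,m}$ contribution for $m$ small enough that $d^m\log n = o(n)$. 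An induction over levels shows $\EE \sum_i T_i^2 = (2/(d+1))^m$, and Markov gives $\sum_i T_i^2 \leq \gamma^m$ with probability at least $1 - (2/((d+1)\gamma))^m$ for any $\gamma \in (2/(d+1), 1)$; on this good event the conditional bound is doubly exponential in $m$.

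The main obstacle is $B_{n,m}$. Writing $\EE[H_n(\mu) \mid \tau|_m] = \EE[Z_{n,m} \mid \tau|_m] + \sum_i \phi_n(T_i)$ with $\phi_n(t) = \sum_k \binom{n}{k}t^k(1-t)^{n-k} \EE H_k(\mu)$, and using $\EE H_k(\mu)/k \to c$ from Lemma \ref{lemma:random_aut_conv_means}, the decomposition $\phi_n(t) = ctn + R_n(t)$ with $R_n(t) = o(nt)$ gives $\sum_i \phi_n(T_i) = cn + \sum_i R_n(T_i)$, reducing $B_{n,m}$ (modulo an $o(n)$ error) to the residual $\sum_i R_n(T_i)$. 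I would control this by a Doob martingale along levels: exposing the Dirichlet sample at each internal node $v$ perturbs $\sum_i R_n(T_i)$ only within the subtree below $v$, by an amount $O(nT_v)$, and Azuma combined with level-wise bounds on $\sum_v T_v^2$ should yield an exponential tail of the same shape as for $A_{n,m}$. Finally, picking $m = m(n)$ with $d^m$ polylogarithmic in $n$ makes both tail bounds decay faster than any polynomial, the resulting probabilities are summable, and Borel--Cantelli concludes the proof.
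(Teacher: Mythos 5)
Your overall architecture — decompose $H_n(\mu) - \EE H_n(\mu)$ into a conditional fluctuation $A_{n,m}$ (handled by conditional Hoeffding, given $\tau|_m$) plus a bias term $B_{n,m}=\EE[H_n(\mu)\mid\tau|_m]-\EE H_n(\mu)$, then apply Borel--Cantelli — is the same as the paper's, and the treatment of $A_{n,m}$ (conditional independence of the $V_{i,n,m}$, Hoeffding with squared ranges controlled by $L_m=\sum_i T_i^2$, Markov bound on $L_m$ via $\EE L_m=(2/(d+1))^m$) matches Claim~\ref{claim:squared_sum_small} and the Hoeffding step in the final part of the proof. The gap is in $B_{n,m}$.

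The assertion $R_n(t)=o(nt)$ does not hold uniformly in $t$. Writing $R_n(t)=\EE\bigl[\bigl(\EE H_K(\mu)-cK\bigr)\bigr]$ with $K\sim\Binom(n,t)$: for $t$ of order $1/n$ the variable $K$ is $O(1)$ with probability bounded away from zero, so $R_n(t)/(nt)$ stays bounded away from zero (e.g.\ $R_n(1/n)=\Theta(1)$ while $nt=1$). So $\sum_i R_n(T_i)$ is not deterministically $o(n)$: it can be as large as $\Theta(n)$ when many level-$m$ sidelengths are tiny. The Doob--Azuma martingale you propose cannot repair this, because the martingale difference at the root node is $O(nT_{\emptyset})=O(n)$: re-sampling the first-level Dirichlet changes every $T_i$ and hence can change $\sum_i R_n(T_i)$ by $\Theta(n)$. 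With $L_0=T_{\emptyset}^2=1$ contributing to the sum of squared differences, Azuma produces a bound $\exp(-\Theta(s^2))$ that is a constant independent of $n$ and $m$, not a summable tail. This is not a technicality: the conditional mean $\EE[H_n(\mu)\mid\tau|_1]$ genuinely has fluctuations of order $n$, and no concentration inequality will show that $B_{n,m}$ is small with probability $1-o(1/n)$ unless you first restrict to a good event.

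The missing ingredient is the paper's gap condition. The paper shows (Claim~\ref{claim:all_gaps_are_large}) that, except on an event of probability $\leq(2da^{(d-1)/2})^m$, all level-$m$ sidelengths satisfy $T_i\geq a^m$; and then (Claim~\ref{claim:expectation_offset_small}) that on this event, for $n\geq a^{-3m}$, each box receives $\gg 1$ sample points with overwhelming probability, so $\max_i\epsilon(k_i)$ is small and $|B_{n,m}|<ns$ holds \emph{deterministically} — no concentration for $B_{n,m}$ is needed. You control only $\sum_i T_i^2$, which parallels Claim~\ref{claim:squared_sum_small} but omits the lower bound on $\min_i T_i$ that actually handles the bias term. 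A second, more minor issue: your choice $d^m=\mathrm{polylog}(n)$ makes the bad-event probability $(\log n)^{-\Theta(1)}$ per $n$, which is not summable; the paper takes $m=\Theta(\log n)$ and, crucially, groups the $n$'s into blocks $[a^{-3m},a^{-3(m+1)}]$ with one $m$ per block, so the bad event is paid once per block and $\sum_m$ converges. That grouping device is needed in your argument too.
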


\begin{proof}

 We would like to proceed similarly to how we did in the proof of Theorem \ref{thm:random_automorphism}, however, this time $H_n(\mu)$ is not the sum of $d^m$ independent random variables up to a small error, as the edge weights are not independent. The idea is the following: restrict the edge weights up to level $m$ to make this sum independent. This enables applying Hoeffding's inequality in the same manner, up to some inconveniences. First of all, it will imply high concentration around $\EE [H_n(\mu^{t|_m})]$ instead of $\EE [H_n(\mu)]$. Given that heuristically, $\EE [H_n(\mu)]$ is mostly determined by the later choices, these should mean the same with high probability, but this observation must be made quantitative, i.e., we have to bound the probability that $t|_{m}$ is bad from this aspect. A simpler issue  we have to be aware of is that in the Hoeffding bound, the squared sum $L_m$ of the random sidelengths $l_1, \dots, l_{d^m}$ on the $m$h level will appear, which controls the strength of the concentration. If $L_m$ is too large, our bound might be meaningless, thus we have to bound the probability that $t|_{m}$ is bad from this aspect as well.

\begin{claim}\label{claim:squared_sum_small}
    $\EE L_m = \left(\frac{2}{d+1}\right)^m$, and hence
    $$\PP\left(L_m\geq \left(\frac{2}{d+1/2}\right)^m\right)\leq \left(\frac{d}{d+1/2}\right)^m.$$
\end{claim}

\begin{proof}[Proof of Claim \ref{claim:squared_sum_small}]
    Observe that $L_m =\sum_{i=1}^{d}\tau_i^2L_{m-1}^{(i)}$, where the $\tau_i$ are the sidelengths on the first level and $L_{m-1}^{(i)}$ are independent, identically distributed copies of $L_{m-1}$. Consequently, as $L_1= \sum_{i=1}^{d}\tau_i^2$,
    $$\EE L_m = (d\EE[\tau_1^2])^m,$$
    due to the gap sizes being identically distributed with $\tau_i\sim \mathrm{Beta}(1, d-1)$. Its second moment is known to be $\frac{2}{d(d+1)}$, hence we find $\EE L_m = \left(\frac{2}{d+1}\right)^m$.
    Now the claim directly follows from Markov's inequality.
\end{proof}

\begin{claim}\label{claim:all_gaps_are_large}
For $0<a<e^{-(d-1)}$
$$\PP(\min_{i=1, \dots, d^{m}} l_i<a^m)\leq\left(2da^{\frac{d-1}{2}}\right)^m$$
\end{claim}

\begin{proof}
The $l_i$ are identically distributed, thus by a union bound, it suffices to prove
\begin{equation}\label{eq:chernoff_goal}
\PP(1/l_1>a^{-m})\leq \left(2a^{\frac{d-1}{2}}\right)^m.
\end{equation}

By definition, $1/l_1$ can be expressed as
$$1/l_1= \prod_{j=1}^{m}A_j^{-1}=\exp\left(-\sum_{j=1}^{m}\log A_j\right),$$
where $A_j\sim \mathrm{Beta}(1, d-1)$. Then $-\log A_j=B_j \sim \mathrm{Exp}(d-1)$, and
$$\PP(1/l>a^{-m})=\PP(\sum_{j=1}^{m} B_j>-m\log a).$$
The moment-generating function of $B_j$ is $\frac{d-1}{d-1-t}$ for $t<d-1$, thus by Chernoff's bound, for any $0<t<d-1$ we have
$$\PP(\sum_{j=1}^{m} B_j>-m\log a)\leq \left(\frac{d-1}{d-1-t}\right)^m \exp(tm\log a)=\left(\frac{d-1}{d-1-t}a^t\right)^m.$$
Substituting $t=\frac{d-1}{2}$, we directly obtain \eqref{eq:chernoff_goal}.    
\end{proof}

We say that $\mu$ is {\it $a$-gapped on the $m$th level} if $\min_{i=1, \dots, d^{m}} l_i\geq a^m$. The previous claim assures that for small enough $a$, depending only on $d$, apart from exponentially small probability $\mu$ is $a$-gapped on the $m$th level. Fix such an $a$ for the rest of the proof.

\begin{claim}\label{claim:expectation_offset_small}
    Fix $s>0$. For some $m\geq m_0$, if $\tau|_{m}=t|_{m}$ is fixed so that $\mu$ is $a$-gapped on the $m$th level, then for $n>a^{-3m}$, 
    $$|\EE [H_n(\mu)] - \EE [H_n(\mu^{t|_m}])|<ns.$$
\end{claim}

\begin{proof}[Proof of Claim \ref{claim:expectation_offset_small}]
    Applying Lemma \ref{lemma:geom_sep} to the natural $m$th level decomposition of $\mu$ sampled after $\tau|_{m}=t|_{m}$ being fixed, and taking expectation, due to self-similarity we find 
    \begin{equation}\label{eq:expectation_offset}
    \EE [H_n(\mu^{t|_m})] = O(d^m\log(n+d^m)) + \sum_{k_1 + \dots + k_{d^m}=n}\PP(E_{k_1, \dots k_{d^m}})\sum_{i=1}^{d^m}\EE [H_{k_i}(\mu)],
    \end{equation}
    where $E_{k_1, \dots, k_{d^m}}$ denotes the event that upon sampling $n$ points uniformly from $[0, 1]$, $k_i$ points is drawn from the $i$th interval of the partition determined by the edge weights up to $\tau|_{m}=t|_{m}$.
    
    Due to the convergence $\EE [H_n(\mu)] /n\to c$, we have $\EE [H_{k}(\mu)] = c(k+\epsilon(k))$, where $\epsilon(k)\to 0$ as $k\to \infty$. Consequently, for any choice of $k_1, \dots, k_{d^m}$, summing to $n$, we have
    $$\left|\EE [H_n(\mu)] -\sum_{i=1}^{d^m}\EE [H_{k_i}(\mu)]\right|=\left|\sum_{i=1}^{d^m}k_i \epsilon(k_i)\right|\leq n\max \epsilon(k_i).$$
    As the error term $O(d^m\log(n+d^m))$ goes to zero after division by $n$ for large $n$, it suffices to show that with high probability, $\max \epsilon(k_i)\to 0$, that is $\min k_i \to \infty$.
    
    To this end, observe that the number $k_i$ of points drawn from the $i$th interval is distributed as $\Binom(n, l_i)$, thus by Hoeffding's inequality,
    $$\sum_{\substack{k_1+\dots+k_{d^m}=n \\ \exists k_i<nl_i/2}} \PP(E_{k_1, \dots, k_{d^m}})\leq d^m \PP\left(k_1<\frac{na^m}{2}\right)\leq d^m\exp\left(-\frac{n}{2}a^{2m}\right)\to 0,$$
    by the choice of $n$, as $m\to \infty$. However, it precisely means that $\max \epsilon(k_i)$ goes to zero apart from probability tending to 0, verifying
    $$|\EE [H_n(\mu)] - \EE [H_n(\mu^{t|_m})]|<s.$$
    for $m$ large enough.
\end{proof}

Now we have every necessary tool to finish the proof of Lemma   \ref{lemma:convergent_probability_sum}.
    Fix $s>0$ and choose $m_0$ to $s$ according to Claim \ref{claim:expectation_offset_small}. For $m\geq m_0$, we will bound the following probability
    $$P_m = \PP(\exists n\in [a^{3m}, a^{3(m+1)}]:\ |H_n(\mu) - \EE [H_n(\mu)]| >2ns\}.$$
    Verifying that $\sum_{m=m_0}^{\infty}P_m<\infty$ is sufficient due to Borel-Cantelli.
    
    We say that $\tau$ is $m$-good if it is $a$-gapped on the $m$th level and $L_m\leq\left(\frac{2}{d+1/2}\right)^m$. We can bound the probability $P_m$ from above by the following sum of probabilities:
    $$P_m\leq \PP(\tau\text{ is not $m$-good})+\sum_{n\in [a^{3m}, a^{3(m+1)}]}\PP(\tau\text{ is $m$-good and }|H_n(\mu) - \EE [H_n(\mu)]| >2ns)$$
    Claims \ref{claim:squared_sum_small}-\ref{claim:all_gaps_are_large} establish that the first term summed for $m$ is convergent. On the other hand, by the natural disintegration given by \eqref{eq:disintegration}
    $$\PP(\tau\text{ is $m$-good and }|H_n(\mu) - \EE [H_n(\mu)]| >2ns)=\int_{t\text{ is $m$-good}}\PP(|H_n(\mu^{t|_m}) - \EE [H_n(\mu)]| >2ns)d\tau(t).$$
    In this set, by Claim \ref{claim:expectation_offset_small} $|\EE [H_n(\mu)] - \EE [H_n(\mu^{t|_m})]|<ns$, thus the integrand is bounded by 
    $$\PP(|H_n(\mu^{t|_m}) - \EE [H_n(\mu^{t|_m})]| >2ns).$$
    For this probability, we can apply Hoeffding's inequality in a similar fashion as in the proof of Claim \ref{claim:large_dev_principle}. Formally, the only thing that changes is that upon bounding
    $$\PP\left(\left|\sum_{i=1}^{d^m}W_{i, n, m} - \EE\left(\sum_{i=1}^{d^m}W_{i, n, m}\right)\right|\geq s/2\right),$$
    the squared sum of the length of ranges of these random variables is not $d^{-m}$, but bounded from above by $\left(\frac{2}{d+1/2}\right)^m$, implying the upper bound
    $$2\exp\left(-\frac{s^2(d+1/2)^m}{2^{m+1}K^2}\right).$$
    This upper bound has a superexponential decay in $m$, thus summing this for $n\in [a^{3m}, a^{3(m+1)}]$ still results in a convergent series. Thus  $\sum_{m=m_0}^{\infty}P_m<\infty$ indeed, concluding the proof.
\end{proof}

\begin{proof}[Proof of Theorem \ref{thm:random_automorphism2}]
We have proved the convergence of the means (Lemma \ref{lemma:random_aut_conv_means}), discussed the sign of the limit (Lemma \ref{lemma:positive_limit}), and verified that almost surely $H_n(\mu)/n$ differs from its mean by more than $s$ finitely many times for any $s>0$ (Lemma \ref{lemma:convergent_probability_sum}). Hence $H_n(\mu)/n$ converges almost surely to a positive number, save for the proposed trivial degenerate cases of a vanishing limit.
\end{proof}

\section*{Acknowledgments}
The author is thankful to Miklós Abért for proposing the idea to start studying sampling entropy sequences and for asking great questions during our numerous discussions about the topic. Besides that, the author is thankful to Balázs Ráth, Boglárka Gehér, Aranka Hrušková, and Ágnes Cs. Kúsz for the helpful discussions. The REU group consisting of Péter Fazekas, Bowen Li, Aayan Pathan, Balázs Szepesi, and Sára Szepessy supervised by the author in this topic also deserves gratitude for the motivating meetings.

\printbibliography

\end{document}